\documentstyle[amssymb,amsfonts,12pt]{amsart}
\newtheorem{theorem}{Theorem}[section]
\newtheorem{lemma}[theorem]{Lemma}
\newtheorem{corollary}[theorem]{Corollary}
\newtheorem{proposition}[theorem]{Proposition}
\newtheorem{definition}[theorem]{Definition}

\theoremstyle{remark}
\newtheorem{remark}[theorem]{Remark}

\textwidth16cm
\topmargin0cm
\oddsidemargin0cm
\evensidemargin0cm
\textheight22.5cm
\def\QSet{\mbox{\rm\kern.24em
\vrule width.03em height1.48ex depth-.051ex \kern-.26em Q}}

\def\D{{\mathcal D}}
\def\T{{\mathbb T}}
\def\P{{\bf P}}\def\p{{\bf p}}
\def\R{{\mathbb R}}

\def\N{{\mathbb N}}
\def\C{{\mathbb C}}
\def\Q{{\bf Q}}
\def\Z{{\mathbb Z}}
\def\I{{\mathcal I}}
\def\F{{\mathcal F}}
\def\J{{\bf J}}

\def\\xi{{\bf \xi}}

\def\be#1{\begin{equation}\label{#1}}

\def\size{{\operatorname{size}}}\def\mass{{\operatorname{mass}}}

\def\O{{\operatorname{O}}}

\def\bas{\begin{align*}}
\def\eas{\end{align*}}
\def\bi{\begin{itemize}}
\def\ei{\end{itemize}}
\newenvironment{proof}{\noindent {\bf Proof} }{\endprf\par}
\def \endprf{\hfill  {\vrule height6pt width6pt depth0pt}\medskip}
\def\emph#1{{\it #1}}

\begin{document}
\title{A guide to Carleson's Theorem}

\author{Ciprian Demeter}
\address{Department of Mathematics, Indiana Unversity, Bloomington, IN 47405}
\email{demeterc@@indiana.edu}
\thanks{}

\subjclass[2010]{Primary: 42A20, Secondary: 42A45}

\keywords{Fourier series, singular integrals, Carleson's Theorem}

\date{}

\dedicatory{}

\begin{abstract}
This paper is meant to be a gentle introduction to Carleson's Theorem on pointwise convergence of Fourier series.
\end{abstract}

\maketitle
\section{introduction}

Let $$S_nf(x)=\sum_{k=-n}^n\widehat{f}(k)e^{2\pi ikx},$$
be the partial Fourier series of the $L^1$ function $f$ on $[0,1]$. In 1966, Lennart Carleson has proved the following long standing conjecture.
\begin{theorem}[\cite{Car}]
\label{Car}
For each $f\in L^2([0,1])$, the Fourier series $S_nf$ converge almost everywhere to $f$.
\end{theorem}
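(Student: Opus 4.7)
The plan is to establish Theorem~\ref{Car} via a weak-type $(2,2)$ bound for the Carleson maximal operator $\mathcal C^\ast f(x) = \sup_N |S_Nf(x)|$. Since trigonometric polynomials are dense in $L^2([0,1])$ and $S_NP\to P$ is trivial on them, a standard Banach-principle/density argument reduces the theorem to this maximal inequality. Using the explicit Dirichlet kernel $D_N(x)=\sin((2N+1)\pi x)/\sin(\pi x)$, together with terms controlled by the Hardy--Littlewood maximal function, I would further reduce the problem on the circle to the analogous bound on $\R$ for the \emph{Carleson operator}
$$Cf(x)=\sup_{N\in\R}\left|\mathrm{p.v.}\int_\R \frac{e^{2\pi i N(x-y)}}{x-y}f(y)\,dy\right|.$$
Linearizing the supremum by a measurable frequency function $N:\R\to\R$, it then suffices to bound $C_N f(x)=\mathrm{p.v.}\int\frac{e^{2\pi i N(x)(x-y)}}{x-y}f(y)\,dy$ from $L^2$ to $L^{2,\infty}$ uniformly in the choice of $N$.

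The heart of the argument is \emph{time-frequency analysis}. I would decompose $C_N$ into wave packets indexed by dyadic \emph{tiles} $P = I_P\times\omega_P$ in phase space, with $|I_P||\omega_P|=1$, schematically
$$C_N f(x) \approx \sum_{P}\langle f,\varphi_P\rangle\,\varphi_P(x)\,\mathbf{1}_{\{N(x)\in\omega_P\}},$$
where $\varphi_P$ is an $L^2$-normalized wave packet adapted to $P$. The tiles that actually contribute at a given $x$ are organized into \emph{trees}, collections of tiles sharing a common frequency. On a single tree $\mathbf T$ one can invoke a Calder\'on--Zygmund/BMO-type estimate controlling its total contribution by $\size(\mathbf T)\cdot\mass(\mathbf T)\cdot|I_{\mathbf T}|$, where $\size$ measures an $L^2$-average of the wave-packet coefficients along $\mathbf T$ and $\mass$ measures the density of $\{N(x)\in\omega_{\mathbf T}\}$ inside $I_{\mathbf T}$.

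The main obstacle, and the deepest part of the proof, is the combinatorial \emph{forest decomposition}: a double stopping-time selection must partition the relevant tiles into forests $\mathbf F_{n,m}$ on which $\size\lesssim 2^{-n}\|f\|_2$ and $\mass\lesssim 2^{-m}$, while the total length of the top intervals of the trees in $\mathbf F_{n,m}$ is controlled by $2^{2n}$ and by $2^{m}$, respectively. Summing the single-tree estimate over the trees in each forest, then summing the resulting bounds geometrically in $n$ and $m$, and finally performing a Calder\'on--Zygmund-type truncation at an appropriate exceptional set, yields the weak-type inequality for $C_N$ and hence Carleson's theorem. I expect essentially all the technical weight to fall on two places: the single-tree estimate, which rests on a delicate almost-orthogonality argument for wave packets belonging to distinct tiles inside a common tree, and the counting lemmas producing the geometric bounds on the tops of the forests.
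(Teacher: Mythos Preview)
The paper does not actually contain a proof of Theorem~\ref{Car}: it is merely stated with a citation to Carleson's original paper, and the body of the paper proves only the Walsh--Fourier analogue (Theorem~\ref{thm:Bil}, via Theorem~\ref{thm:main}). So there is no ``paper's own proof'' of this statement to compare against.

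That said, your outline is a faithful sketch of the Lacey--Thiele argument, and it matches closely the content of Section~\ref{sec:Lac-Thi}, which carries out exactly this strategy in the Walsh setting: the tree estimate $|\langle C_Tf,1_F\rangle|\lesssim |I_T|\,\mass_F(T)\,\size_f(T)$ is Proposition~\ref{sceruyfr45f832p}, and the double stopping-time decomposition into forests $\P_{n,m}$ with $\size\le 2^{-n}$, $\mass\le 2^{-m}$ and top-interval bounds $\sum|I_T|\lesssim 2^{2n}\|f\|_2^2$, $\sum|I_T|\lesssim 2^m|F|$ is exactly Propositions~\ref{2erbyu76i87o} and~\ref{massdecompver2}. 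Note, however, that the paper also presents three other routes to the Walsh result (Sections~\ref{sec:3}, \ref{FEFE}, \ref{victor}, \ref{nochoice}) which use size alone, mass alone, a direct strong-type argument, and a bilinear-Hilbert-transform reformulation respectively; your proposal singles out just one of these.

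One small correction to your diagnosis of where the difficulty lies: the single-tree estimate is \emph{not} where the delicate almost-orthogonality happens. On a single tree the wave packets $W_{P_u}$ are genuinely orthogonal (lacunary case) or the sum collapses to a single term (overlapping case), and the tree estimate is essentially classical Calder\'on--Zygmund/Littlewood--Paley theory, as in Lemmas~\ref{lemasingtree1} and~\ref{furfy58t78t585igutgutih}. The orthogonality that drives the counting bounds on the forest tops is \emph{between} the top bitiles of different trees (they are pairwise disjoint by maximality, hence the projections $\Pi_t f$ are orthogonal in the size lemma, and the sets $E(t)$ are disjoint in the mass lemma). In the Fourier setting the extra technical weight comes from Schwartz tails rather than from any new almost-orthogonality idea inside a tree.
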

Soon after that, a slight modification of Carleson's method allowed Hunt \cite{Hun} to extend the result to $L^p$ functions, for $p>1$.

Theorem \ref{Car} has since received many  proofs, most notably by Fefferman \cite{Fe} and by Lacey and Thiele \cite{LT-C}.
The impact of Carleson's Theorem has increased in recent years thanks to its connections with Scattering Theory \cite{MTT1}, Egodic Theory \cite{DLTT}, \cite{DT},  the theory of directional singular integrals in the plane \cite{LaLi1}, \cite{LaLi2}, \cite{Dem1}, \cite{DeDi}, \cite{Ba}, \cite{BT} and the theory of operators with quadratic modulations \cite{Lie1}, \cite{Lie2}. A more detailed description  can be found in \cite{Lac}. These connections have motivated the discovery of various new arguments for Theorem \ref{Car}. While these arguments share some similarities, each of them has a distinct personality. Along these lines, it is interesting to note that for almost every specific application of Carleson's Theorem in the aforementioned fields, only one of the arguments will do the job.

All the arguments for Theorem \ref{Car} are technical. To present the main ideas in a transparent  way, we will instead analyze the closely related Walsh-Fourier series, which we recall below.

For $n\ge 0$  the $n-$th Walsh function $w_n$ is defined recursively by the formula
$$w_0=1_{[0,1)}$$
$$w_{2n}=w_n(2x)+w_n(2x-1)$$
$$w_{2n+1}=w_n(2x)-w_n(2x-1).$$

Given $f:[0,1]\to\C$ we recall the partial Walsh-Fourier series of $f$
$$S_n^{W}f(x)=\sum_{k=0}^n\langle f, w_k\rangle w_k(x).$$

The following theorem was proved by Billard, by adapting Carleson's methods.
\begin{theorem}[\cite{Bil}]
\label{thm:Bil}

For each $1<p\le \infty$ and each $f\in L^p([0,1])$, the series $S_n^{W}f(x)$ converges almost everywhere to $f(x)$.
\end{theorem}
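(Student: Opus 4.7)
The plan is to deduce the theorem from a weak-type maximal inequality for the Walsh--Carleson operator
$$C^W f(x) = \sup_{n \ge 0} |S_n^W f(x)|.$$
By the standard Banach principle, almost everywhere convergence on $L^p$ will follow once we show $\|C^W f\|_{L^{p,\infty}} \lesssim \|f\|_p$ for $1 < p < \infty$, since Walsh polynomials are dense in $L^p$ and the series converges trivially for them; the case $p=\infty$ is then handled by noting $L^\infty([0,1]) \subset L^q([0,1])$ for any $q<\infty$.

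To attack the maximal operator I would \emph{linearize}: fix a measurable function $N : [0,1] \to \N$ and study
$$T_N f(x) = S_{N(x)}^W f(x),$$
so it suffices to prove the weak bound for $T_N$ uniformly in $N$. The decisive feature of the Walsh system is that $T_N$ admits an exact representation as a sum over dyadic \emph{tiles} $P = I_P \times \omega_P$ (rectangles of area $1$ in time-frequency space) of the form
$$T_N f(x) = \sum_{P : N(x) \in \omega_P} \langle f, w_P \rangle\, w_P(x)\, \mathbf{1}_{I_P}(x),$$
where $w_P$ is the Walsh wave packet attached to $P$ and the condition $N(x) \in \omega_P$ selects which tiles contribute at each $x$. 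This is the Walsh analogue of the Fefferman--Lacey--Thiele phase-space decomposition, and it makes all later estimates purely combinatorial.

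The heart of the proof is then the organization of the tile collection $\P$. To each subcollection one attaches two quantities: $\size$, measuring the $L^2$ density of the coefficients $\langle f, w_P\rangle$ along maximal \emph{trees} (families of tiles with a common top), and $\mass$, measuring how strongly the level sets $\{N(x) \in \omega_P\}$ crowd inside $I_P$. A single tree operator satisfies a clean estimate of the form $\|T_{\mathcal T} f\|_2 \lesssim \size(\mathcal T)\, |I_{\mathcal T}|^{1/2}$, which is essentially a disguised Bessel/martingale bound exploiting the orthogonality of Walsh wave packets inside a tree. A \emph{Carleson selection algorithm} then decomposes $\P$ into forests (disjoint unions of trees) whose size and mass parameters decay geometrically and whose top-interval totals are controlled; summing the tree bounds over these forests and optimizing via H\"older yields a restricted weak-type inequality
$$|\langle T_N \mathbf{1}_F, \mathbf{1}_G\rangle| \lesssim |F|^{1/p} |G|^{1/p'},$$
from which the weak $(p,p)$ bound for $C^W$ follows by Marcinkiewicz-type interpolation of restricted weak types.

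The hardest step will be the combinatorial one: proving the \emph{counting lemmas} that bound the number of trees produced at each size/mass level, and verifying that $\size$ and $\mass$ interact correctly with $\|f\|_p$ and $|G|$ so that the geometric series in the forest decomposition actually sums. The Walsh setting eliminates all analytic error terms, since wave packets are perfectly localized in frequency and the natural orthogonality is exact, but this combinatorial core of Carleson's argument is preserved intact and is where essentially all the work resides.
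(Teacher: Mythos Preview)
Your proposal is correct and matches the Lacey--Thiele argument the paper gives in Section~\ref{sec:Lac-Thi}: linearize via $N$, pass to a tile model sum, introduce $\size$ and $\mass$, prove a tree estimate, run selection algorithms to organize the bitiles into forests with controlled top intervals, and sum to a restricted weak-type bound that is then interpolated. One small correction to your model-sum formula: the decomposition uses \emph{bitiles} (area-two rectangles), with the constraint $N(x)\in\omega_{P_l}$ placed in one half of the frequency interval and the wave packet $W_{P_u}$ taken from the other half, not area-one tiles carrying both on the same $\omega_P$.
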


We present a few proofs of Theorem \ref{thm:Bil} which are  translations of their Fourier analogues. In each case the translation can be done in more than one way, the proofs presented reflect author's taste.
While a few of the features of the original proofs from the Fourier case will be lost in translation, the main line of thought will be preserved essentially intact in the Walsh case. Very little originality is claimed by the author.

This paper is by no means a complete guide to Carleson's Theorem, in particular we shall make no attempt to describe in detail any of its afore mentioned applications. The main goal is to give a self contained but concise survey of some of the main arguments in the literature.

\subsection*{Acknowledgements}The author would like to thank  Christoph Thiele for clarifying discussions on the argument from \cite{Lie2} and to Fangye Shi for carefully reading the original manuscript and pointing out a few typos. Part of the material in this paper was organized while teaching a class on Harmonic Analysis. Many thanks to Ben Krause for a careful reading and for pointing out a few inaccuracies. The author is grateful to his students Francesco Di Plinio and Prabath Silva for motivating him and to his collaborators for enriching his understanding of time-frequency analysis.

\section{The Walsh phase plane}

It turns out that there is a multiscale description for $S_n^{W}f$. Let $\D_+$ denote the collection of all dyadic intervals which are subsets of $\R_+=[0,\infty)$. We call $\R_+\times \R_+$ the Walsh phase plane.

\begin{definition}
A tile $p=I_p\times \omega_p$ is a rectangle of area one, such that $I_p,\omega_p\in\D_+$. A bitile $P=I_P\times \omega_P$ is a rectangle of area two, such that $I_P,\omega_P\in\D_+$. Let $\omega_{P_l}, \omega_{P_u}$ be the left (or lower) and right (or upper) halves of $\omega_P$. We will denote by $P_l=I_P\times \omega_{P_l}$ and $P_u=I_P\times \omega_{P_u}$ the lower and upper tiles of $P$. We denote by $\P_{all}$ the collection of all bitiles.
\end{definition}

Given a tile $p=[2^jm, 2^{j}(m+1)]\times [2^{-j}n, 2^{-j}(n+1)]$ we define the associated Walsh wave packet
$$W_p(x)=2^{-j/2}w_n(2^{-j}x-m).$$

To understand the relevance of the Walsh phase plane, we recall a few tools from \cite{Thi1}, see also \cite{Ter1}.

Every $x\in \R_{+}=[0,\infty)$ can be identified uniquely with a doubly-infinite set of binary digits $a_n=a_n(x)$ such that
$$x=\sum_{n\in\Z}a_n2^n$$
where $a_n\in\{0,1\}$ and $\liminf_{n\to-\infty}a_n=0$. Note that $a_n$ is eventually zero as $n\to\infty$. We define two operations on $\R_{+}$. First, $$x\oplus y:=\sum_{n}b_n2^n$$
where
$$b_n:=a_n(x)+a_n(y)\mod 2.$$
We caution the reader that $b_n$ is not always the same as $a_n(x\oplus y)$, as the example $x=\sum_{n<0:\,n\text{ odd}}2^n$, $y=\sum_{n<0:\,n\text{ even}}2^n$ shows. Also, since $1=x\oplus y=x\oplus z$
where $z=1+\sum_{n<0:\,n\text{ odd}}2^n$, $(\R_+,\oplus)$ is not, strictly speaking a group. We will be content with observing that for all practical purposes  $(\R_+,\oplus)$ can be thought of as being a group, in the sense that $\oplus$ behaves like a genuine group operation if we exclude pairs $x,y$ of zero product Lebesgue measure.

Define the second operation by $$x\otimes y:=\sum_{n}c_n2^n$$
where
$$c_n:=\sum_{m\in\Z}a_m(x)a_{n-m}(y)\mod 2.$$
We note that this  sum is always finite. If we neglect zero measure sets, $(\R_+,\otimes,\oplus)$ can be thought of as being  a field with characteristic two. It will be implicitly assumed that various equalities to follow hold outside zero measure sets.

Define the function $e_W:\R_{+}\to\{-1,1\}$ such that $e_W(x)=1$ when $a_{-1}(x)=0$ and $e_W(x)=-1$ when $a_{-1}(x)=1$. This  1-periodic function is the Walsh analogue of $e^{2\pi ix}$. It is easy to check that
$$w_n(x)=e_W(x\otimes n)1_{[0,1]}(x),$$
thus $w_n$ can be thought of as being the Walsh analogue of $e^{2\pi i nx}$.
Also, for each tile $p=I_p\times \omega_p$ we have
$$W_p(x)=\frac{1}{|I_p|^{1/2}}w_0(\frac{x-l(I_p)}{|I_p|})e_W(x\otimes l(\omega_p)),$$
where $l(J)$ denotes the left endpoint of $J$. A simple computation shows that for each bitile $P$
\begin{equation}
\label{e54757897985960869078089=-}
W_{P_l}(x)=W_{P_u}(x)e_{I_P}(x)
\end{equation}
where $e_I(x)=1$ on $I_l$ and $e_I(x)=-1$ on $I_r$.

The collection of all wave packets $W_p$ where $p$ ranges over all tiles with fixed scale forms a complete orthonormal system in $L^2(\R_+)$ and thus
$$f=\sum_{p:\;|I_p|=2^j}\langle f,W_p\rangle W_p.$$

We introduce the Walsh (also called Walsh-Fourier) transform of a function $f:\R_+\to\C$ to be
$$\F_Wf(\xi)=\widehat{f}(\xi):=\int e(x\otimes\xi)f(x)dx.$$
It is easy to see that its inverse $\F_W^{-1}$ coincides with $\F_W$.

Arguably the most important feature that makes the Walsh phase plane technically simpler than its Fourier counterpart  is the absence of the strong form of the  "Uncertainty Principle". This allows the existence of functions compactly supported in both time and frequency.
The best example is $1_{[0,1]}$, which  equals its Walsh transform. A quick computation shows that for each interval $I\in\D_+$
\begin{equation}
\label{e12}
\widehat{1_{I}}(\xi)=|I|1_{[0,|I|^{-1}]}(\xi)e(\xi\otimes x_I)
\end{equation}
where $x_I$ is an arbitrary element of $I$.
Similarly
$$\widehat{W_p}(\xi)=\frac{1}{|\omega_p|^{1/2}}w_0(\frac{\xi-l(\omega_p)}{|\omega_p|})e_W(\xi\otimes l(I_p)).$$
 Thus $W_p$ is spatially supported in $I_p$ while its Walsh transform is supported in $\omega_p$. An application of Plancherel's theorem  shows that
\begin{equation}
\label{e11}
\langle W_p,W_{p'}\rangle=0
\end{equation}
whenever the tiles $p$ and $p'$ do not intersect.

The following partial relation of order was introduced by C. Fefferman \cite{Fe}.
\begin{definition}[Order]
For two tiles or bitiles $P,P'$ we write $P\le P'$ if $I_P\subset I_{P'}$ and $\omega_{P'}\subset \omega_{P}$
\end{definition}
Note that $P$ and $P'$ are comparable under $\le$ if and only if they intersect as sets. We will refer to maximal (or minimal) tiles (or bitiles) with respect to $\le$ as simply being maximal (or minimal).

\begin{definition}[Convexity]
A collection $\P$ of bitiles is called convex if whenever  $P,P''\in\P$, $P'\in\P_{all}$ and $P\le P'\le P''$, we must also have $P'\in\P$.
\end{definition}

For a collection $\p$ of tiles or bitiles we denote by $A(\p)=\bigcup_{p\in\p}I_p\times \omega_p$ the region in $\R_+^2$ covered by them.

We will use the fact (see Lemma 2.5 in \cite{Thi1}) that for each convex set of bitiles $\P$, the region $A(\P)$  can be written (not necessarily in a unique way) as a disjoint union of tiles $\p$
\begin{equation}
\label{e10}
A(\P)=A(\p)
\end{equation}

We can identify any region in the Walsh phase plane which is a finite union of pairwise disjoint tiles $p\in\p$ with the subspace of $L^2(\R_+)$ spanned by $(W_p)_{p\in\p}$. Indeed, it turns out that if two such collections  $\p$ and $\p'$ of tiles cover the same area in the phase plane, then $(W_p)_{p\in\p}$ and $(W_p)_{p\in\p'}$ span the same vector space in $L^2(\R_+)$, see Corollary 2.7 in \cite{Thi1}). In particular
\begin{equation}
\label{33}
\sum_{p\in\p}\langle f,W_p\rangle W_p(x)=\sum_{p\in\p'}\langle f,W_{p}\rangle W_p(x).
\end{equation}
The projection operator onto this subspace
$$\Pi_{\p}f(x)=\sum_{p\in\p}\langle f,W_p\rangle W_p(x)$$
will be referred to as the {\em phase space projection} onto $\p$. \eqref{e11} guarantees that $(W_p)_{p\in\p}$ forms an orthonormal basis of the range of $\Pi_\p$. In particular, if $\P$ is a convex union of bitiles and $\p$ satisfies \eqref{e10}, then we abuse notation and define
$$\Pi_{\P}f:=\Pi_{\p}f. $$

An easy induction argument proves

\begin{lemma}[Corollary 2.4, \cite{Thi1}]
\label{2diffways}
Let $\p,\p'$ be a finite collections of pairwise disjoint tiles such that $A(\p')\subset A(\p)$. Then there exists a collection $\p''$ of pairwise disjoint tiles which includes all the tiles in $\p'$ such that $A(\p)=A(\p'')$.
\end{lemma}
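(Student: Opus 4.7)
The plan is to prove the lemma by induction on $|\p'|$, with base case $\p' = \emptyset$ handled by $\p'' = \p$. The key inductive tool is the following single-insertion claim: \emph{given any finite disjoint collection $\p_0$ of tiles and any tile $p^* \subset A(\p_0)$, there exists a finite disjoint collection $\tilde\p$ of tiles with $A(\tilde\p) = A(\p_0)$ and $p^* \in \tilde\p$}. Granting it, the inductive step is routine: pick any $p^* \in \p'$ and apply the claim to $\p$ to obtain $\tilde\p \ni p^*$ with $A(\tilde\p) = A(\p)$. Since $\p'$ is pairwise disjoint, $A(\p' \setminus \{p^*\}) \subset A(\p) \setminus p^* = A(\tilde\p \setminus \{p^*\})$, and the inductive hypothesis applied to $(\tilde\p \setminus \{p^*\},\ \p' \setminus \{p^*\})$ yields a disjoint $\p_0'' \supset \p' \setminus \{p^*\}$ with $A(\p_0'') = A(\p) \setminus p^*$; then $\p'' := \p_0'' \cup \{p^*\}$ has the required properties.

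To prove the single-insertion claim, I restrict attention to the sub-collection $\p_* \subset \p_0$ of tiles that intersect $p^*$; the remaining tiles $\p_0 \setminus \p_*$ are left untouched, so it suffices to re-tile $A(\p_*)$ to include $p^*$. Since two tiles are $\le$-comparable iff they intersect, every tile in $\p_*$ is comparable with $p^*$; moreover, $\p_0$ being pairwise disjoint prevents $\p_*$ from simultaneously containing a $p \le p^*$ and a distinct $p' \ge p^*$ (these would then be comparable with each other and hence would intersect). Hence, after handling the trivial case $p^* \in \p_0$, either every $p \in \p_*$ satisfies $p < p^*$ or every $p \in \p_*$ satisfies $p > p^*$. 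I treat the first case in detail; the second is dual, with the roles of $I$ and $\omega$ exchanged. In the first case, the intervals $\{I_p : p \in \p_*\}$ form a nontrivial dyadic partition of $I_{p^*}$, and each $\omega_p$ properly contains $\omega_{p^*}$.

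The re-tiling tool is the elementary move
$$\{I_1 \times \omega,\ I_2 \times \omega\}\ \longleftrightarrow\ \{J \times \omega_l,\ J \times \omega_u\},$$
in which $I_1, I_2$ are the two halves of some $J \in \D_+$, and both sides are tilings of the area-$2$ rectangle $J \times \omega$. I apply this move iteratively to sibling pairs in $\p_*$ whose $I$-intervals are shortest in the current partition of $I_{p^*}$; such a pair's $\omega$-intervals coincide because they are dyadic intervals of the same length both containing $\omega_{p^*}$. After each move, exactly one of the two new tiles still contains $\omega_{p^*}$ in its $\omega$-interval (and remains in $\p_*$), while the other is disjoint from $p^*$ and is discarded into the ambient tiling. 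The nonnegative integer complexity $\sum_{p \in \p_*} \log_2(|\omega_p|/|\omega_{p^*}|)$ strictly decreases with each move, so the procedure terminates, necessarily in the state $\p_* = \{p^*\}$.

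The main obstacle is verifying that a sibling pair usable for the re-tiling move exists whenever $\p_* \ne \{p^*\}$. This rests on an easy observation about dyadic partitions: the intervals of minimal length in any nontrivial dyadic partition come in sibling pairs, for otherwise a sibling of such a minimal interval would have to be covered by strictly smaller intervals, contradicting minimality. Granting this, the inductive procedure terminates; the symmetric sub-case $p > p^*$ is handled analogously; and gluing the re-tiling of $A(\p_*)$ with $\p_0 \setminus \p_*$ produces the desired $\tilde\p$.
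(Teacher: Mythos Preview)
Your proof is correct and is precisely the ``easy induction argument'' the paper alludes to without writing out: the paper merely states that the lemma follows by an easy induction and cites Thiele, so your argument supplies exactly what is omitted. The induction on $|\p'|$ via the single-insertion claim, the dichotomy coming from comparability of intersecting tiles, and the elementary bitile move $\{I_1\times\omega,\ I_2\times\omega\}\leftrightarrow\{J\times\omega_l,\ J\times\omega_u\}$ are the standard steps, and your termination via the complexity $\sum_{p\in\p_*}\log_2(|\omega_p|/|\omega_{p^*}|)$ is clean.
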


Lemma \ref{2diffways} and \eqref{e11} will imply that for each convex collection $\P$ of bitiles and for each  $P\in \P$
\begin{equation}
\label{e16}
\langle f,W_{p}\rangle=\langle \Pi_\P f,W_{p}\rangle,\;p\in\{P_u,P_l\}.
\end{equation}

\begin{remark}
\label{rem1}
We mention that the relation of order as well as concepts such as convexity and  phase space projections can be extended naturally to the two (or higher) dimensional case. This will be explored in Section \ref{nochoice}.
\end{remark}

Fix now $n\ge 0$. Note that $S_n^{W}f(x)=\Pi_{\p_n}f(x)$, where $\p_n$ is the collection of the tiles $[0,1]\times [k,k+1]$, $0\le k\le n$. We will partition $A(\p_n)=[0,1]\times [0,n+1]$ in a different way. Namely, for each point $(x,\xi)\in [0,1]\times [0,n+1]$, there exists a unique bitile $P$ such that $(x,\xi)\in P_l$ and $(x,n+1)\in P_u$. This bitile is precisely the minimal one such that $n+1,\xi\in\omega_P$ and $x\in I_P$. Note that the tiles ${P_l}$ corresponding to all these $P$ will partition $[0,1]\times [0,n+1]$. But then \eqref{33} will imply that

$$S_n^{W}f(x)=\sum_{P\in\P_{all}:n+1\in \omega_{P_u}}\langle f,W_{P_l}\rangle W_{P_l}(x),$$
in particular
$$\sup_{n\ge 0}|S_n^{W}f(x)|=|\sum_{P\in\P_{all}}\langle f,W_{P_l}\rangle W_{P_l}(x)1_{\omega_{P_u}}(N(x))|$$
for a suitable function $N:\R_+\to\N$. The roles of $P_u$ and $P_l$ can be interchanged, without altering the nature or the difficulty.  For pedagogical reasons  we choose to work with
the model sums
$$C_{\P}f(x)=\sum_{P\in\P}\langle f,W_{P_u}\rangle W_{P_u}(x)1_{\omega_{P_l}}(N(x)),$$
where $\P\subset \P_{all}$ and $f:\R_{+}\to\C$. Using the standard approximation argument combined with the almost everywhere convergence of $S_n^Wf(x)$ for characteristic functions of intervals, Theorem \ref{thm:Bil} will follow from the following inequality.
\begin{theorem}
\label{thm:main}
We have for each $1<p<\infty$, $N:\R_+\to\R_+$ and $f\in L^p(\R_+)$
$$\|C_{\P_{all}}f\|_p\le C_p \|f\|_p.$$
The constant $C_p$ does not depend on $f$ and $N$.
\end{theorem}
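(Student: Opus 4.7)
The plan is to follow the Lacey--Thiele time-frequency argument, adapted to the Walsh phase plane. Fix $N$ and observe that $C_{\P_{all}}$ is linear in $f$, so by interpolation and duality it suffices to establish the restricted weak-type bound: for any $1<p<\infty$ and any measurable sets $F,G\subset\R_+$ of finite measure, there is a major subset $G'\subset G$ with $|G'|\geq|G|/2$ such that
$$\Lambda(F,G'):=\Big|\sum_{P\in\P_{all}}\langle 1_F,W_{P_u}\rangle\int_{G'}W_{P_u}(x)\,1_{\omega_{P_l}}(N(x))\,dx\Big|\leq C_p|F|^{1/p}|G|^{1/p'}.$$
The major subset is obtained by removing from $G$ the set where the dyadic maximal function of $1_F$ exceeds $C|F|/|G|$, so that every dyadic interval $I$ meeting $G'$ satisfies $|I\cap F|\lesssim(|F|/|G|)|I|$.

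Organize the bitiles in $\P_{all}$ into trees. A tree $T$ with top $P_T$ is a convex subfamily with $P\leq P_T$ for every $P\in T$; it splits into its $1$-tree part $T^1=\{P\in T:\omega_{P_T}\subset\omega_{P_l}\}$ and its complementary $2$-tree part $T^2$. In a $1$-tree the upper tiles $P_u$ are pairwise disjoint in the phase plane, so by \eqref{e11} the packets $W_{P_u}$ form an orthonormal family. Attach to $T$ the size
$$\size(T):=\Big(\frac{1}{|I_T|}\sum_{P\in T^1}|\langle 1_F,W_{P_u}\rangle|^2\Big)^{1/2}$$
and the mass
$$\mass(T):=\sup_{P\in T}\frac{|G'\cap I_P\cap N^{-1}(\omega_{P_l})|}{|I_P|}.$$
The central tree estimate asserts $|\Lambda_T(F,G')|\lesssim\size(T)\,\mass(T)\,|I_T|$: on $T^1$ it follows from orthogonality of the $W_{P_u}$ together with a dual $\ell^2$ bound driven by mass; on $T^2$ the telescoping identity \eqref{e54757897985960869078089=-} converts the partial sum into a phase-space projection of a $\BMO$-bounded object, whence the bound follows by $L^1$--$\BMO$ duality on $I_T$.

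The proof concludes by iterating two geometric decomposition lemmas. Any subcollection with $\size\leq 2\sigma$ decomposes as a disjoint union of trees $\{T_j\}$ of size at most $2\sigma$ with $\sum_j|I_{T_j}|\lesssim\sigma^{-2}|F|$, plus a residual of size at most $\sigma$; an analogous mass decomposition yields $\sum_j|I_{T_j}|\lesssim\mu^{-1}|G|$. Starting from the a priori bounds $\size(\P_{all})\lesssim 1$ (from Bessel) and $\mass(\P_{all})\lesssim|F|/|G|$ (from the choice of $G'$), one iterates dyadically on $\sigma$ and $\mu$, sums the tree estimate $\size\cdot\mass\cdot|I_{T_j}|$ across all levels, and optimizes the truncation point to obtain $\Lambda(F,G')\lesssim_p|F|^{1/p}|G|^{1/p'}$ for every $1<p<\infty$. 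The principal obstacle is the sharp Bessel-type bound $\sum_j|I_{T_j}|\lesssim\sigma^{-2}|F|$ in the size decomposition: it requires a greedy selection of maximal $1$-trees followed by a quasi-orthogonality argument exploiting the disjointness of the top wave packets of distinct selected trees, and this is the main technical core of the proof.
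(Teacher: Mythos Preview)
Your outline follows the Lacey--Thiele argument of Section~\ref{sec:Lac-Thi}, and the architecture---tree estimate, size and mass decomposition lemmas, double dyadic summation---is the right one. However, there is a concrete error in your a priori bounds that prevents the summation from closing.

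You assert that removing from $G$ the set $\{M1_F>C|F|/|G|\}$ yields $\mass(\P_{all})\lesssim |F|/|G|$. This is false. Your mass is $\sup_P|G'\cap I_P\cap N^{-1}(\omega_{P_l})|/|I_P|$, and the exceptional-set removal says nothing about $|G'\cap I_P|$ beyond the trivial bound $\le 1$. What the removal actually controls is $|I_P\cap F|/|I_P|\lesssim |F|/|G|$ for every $I_P$ meeting $G'$, and since $|\langle 1_F,W_{P_u}\rangle|\le |I_P|^{-1/2}|I_P\cap F|$, this gives $\size\lesssim |F|/|G|$ on the bitiles whose spatial interval meets $G'$. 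Your two a priori bounds are swapped.

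Even after the swap is corrected, a single summation does not cover all $1<p<\infty$. With size levels $2^{-n}$ and mass levels $2^{-m}$, the double sum is controlled by
\[
\sum_{n,m}2^{-n-m}\min(2^{2n}|F|,\,2^{m}|G|)\le |F|^{\theta}|G|^{1-\theta}\sum_n 2^{n(2\theta-1)}\sum_m 2^{-m\theta},
\]
and the $n$-sum over $n\ge 0$ converges only if $\theta<1/2$. Taking $\theta=1/p$ thus works for $p>2$ but diverges for $p<2$. The paper therefore treats the two ranges separately: for $p>2$ one takes $G'=G$, uses only $\size\lesssim 1$, $\mass\lesssim 1$, and $\theta=1/p$; for $p<2$ one removes the exceptional set, restricts to bitiles with $I_P\not\subset (G')^c$ so that the size iteration starts only at level $2^{-n}\lesssim |F|/|G|$, and then takes $\theta=1/p'$. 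Your phrase ``optimizes the truncation point'' does not capture this dichotomy, and as written the argument fails for $p<2$.

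Two minor remarks. Your size is defined via a sup over $1$-trees rather than over single bitiles as in the paper; these are equivalent for the purposes of the size lemma, so no harm done. Your tree estimate sketch invoking $L^1$--$\BMO$ duality is heavier than necessary in the Walsh setting: the paper's Proposition~\ref{sceruyfr45f832p} obtains it directly from the Carleson-measure bound \eqref{Carmeasmass} and the pointwise estimate \eqref{jvgirtg782r]o3olfkjtiohj}, with no $\BMO$ needed.
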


Note that to recover Theorem \ref{thm:Bil} we could restrict attention to functions $f$ on $[0,1]$ and to the bitiles spatially supported in $[0,1]$. We will do so in some, but not all the proofs to follow. We will always allow the choice function $N$ to take any value in $\R_+$, not just integers.

\section{Estimates for a single tree}

Throughout the paper we will denote by $Mf$ the Hardy-Littlewood maximal function of $f$. The various implicit constants hidden in the notation $\lesssim$ will typically be allowed to depend on the H\"older exponents $p,p_i,s_i$ etc.

Fefferman \cite {Fe} organized the bitiles in structures that he named {\em trees} and {\em forests}. The restriction $C_T$ to a tree will be a  Calder\'on-Zygmund object which can be investigated with classical methods. The contribution of forests is controlled by using various forms of orthogonality between the tree operators $C_T$.
All the  approaches described in the following sections will rely on this strategy.

\begin{definition}
Let $I_T\in \D_+$ and $\xi_T\in \R_+\setminus\{n2^{-k}:n,k\in\Z\}$. A tree $T$ with top data $(I_T,\xi_T)$  is a collection of bitiles such that $I_P\subset I_T$ and $\xi_T\in \omega_P$ for each $P\in T$. If
 $P_T\in \P_{all}$ is such that
$P\le P_T$ for each $P\in T$, we call $P_T$ a top bitile for $T$. Note that such a $P_T$ is not unique and $T$ need not contain a top bitile.

A tree is called overlapping if the tiles $\{P_u:P\in T\}$ intersect. A tree is called lacunary if the tiles $\{P_l:P\in T\}$ intersect.
\end{definition}
Each tree can be decomposed as $T=T_l\cup T_o$ where
$$T_l=\{P\in T:\xi_T \in \omega_{P_l}\}$$
$$T_o=\{P\in T:\xi_T\subset \omega_{P_u}\}$$
Note that $T_l$ is lacunary while $T_o$ is overlapping. Moreover, if $T$ is convex then so are the trees   $T_l$ and $T_o$.  These observations  will allow us to always assume the tree we deal with  is either overlapping or lacunary.

The classical example of lacunary tree is the {\em Littlewood-Paley} tree consisting of all bitiles of the form $P^I:=I\times [0,|I|^{-1}]$, $I\in\D_+$.
Note that for each $P^I$ in the Littlewood-Paley tree we have
$W_{P^I_u}(x)=h_I(x)$, where $h_I$ is the $L^2$ normalized Haar function equal to $|I|^{-1/2}$ on the left half $I_l$ and to $-|I|^{-1/2}$ on the right half $I_u$. Recall that if $\I$ is a subset of the Littlewood-Paley tree we have
\begin{equation}
\label{e14}
\|\sum_{P^I\in\I}\langle f, h_I\rangle h_I\|_p\lesssim \|f\|_p,\;\;\;\;1<p<\infty.
\end{equation}

For a lacunary tree $T$ we denote
$$\O_Tf(x)=\sum_{P\in T}\langle f,W_{P_u}\rangle W_{P_u}(x).$$
We have the following generalization of \eqref{e14}.

\begin{lemma}[Single tree estimate: singular integral]
\label{lemasingtree1}
Let $T$ be a lacunary tree. Then for each $1<p<\infty$
$$\|\O_Tf\|_p\lesssim \|f\|_p.$$
\end{lemma}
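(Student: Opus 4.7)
The plan is to conjugate $\O_T$ by the Walsh modulation $f\mapsto f\cdot e_W(\cdot\otimes \xi_T)$ so as to recognize it as a partial Haar projection, after which \eqref{e14} will close the argument.

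Since $T$ is lacunary I may fix $\xi_T\in\bigcap_{P\in T}\omega_{P_l}$. The core identity to establish is
$$W_{P_u}(x)\;=\;\epsilon_P\, h_{I_P}(x)\, e_W(x\otimes \xi_T),\qquad \epsilon_P\in\{-1,+1\},\ P\in T.$$
To derive it I write $|I_P|=2^j$ and note that $l(\omega_{P_u}) = l(\omega_{P_l})\oplus 2^{-j}$. Lacunarity forces $\xi_T$ to share digits with $l(\omega_{P_l})$ at all positions $\ge -j$, so $\xi_T\oplus l(\omega_{P_l})=:\delta_P$ satisfies $\delta_P<2^{-j}$. Using distributivity of $\otimes$ over $\oplus$ and the multiplicativity $e_W(a\oplus b)=e_W(a)e_W(b)$, the product $W_{P_u}(x)\,e_W(x\otimes \xi_T)$ factors on $I_P$ into $e_W(x\otimes 2^{-j})$ and $e_W(x\otimes \delta_P)$. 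A direct digit count gives $e_W(x\otimes 2^{-j})=(-1)^{a_{j-1}(x)}$, which is exactly the Haar sign on $I_P$; and since $a_{-1-m}(\delta_P)=0$ for every $m<j$, the remaining factor $e_W(x\otimes \delta_P)$ depends only on the digits $a_m(x)$ with $m\ge j$, hence is constant on $I_P$. Calling that constant $\epsilon_P$, and noting that both sides of the identity vanish off $I_P$, finishes this step.

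Now set $g(x):=f(x)\,e_W(x\otimes \xi_T)$, so $|g|=|f|$. The identity yields $\langle f,W_{P_u}\rangle=\epsilon_P\langle g,h_{I_P}\rangle$, and since $\epsilon_P^2=1$,
$$\O_Tf(x)\;=\;e_W(x\otimes \xi_T)\sum_{P\in T}\langle g,h_{I_P}\rangle\,h_{I_P}(x).$$
Lacunarity also forces $P\mapsto I_P$ to be injective on $T$ (once $\xi_T$ and $I_P$ are prescribed, $\omega_P$ is the unique dyadic interval of length $2/|I_P|$ whose lower half contains $\xi_T$), so the inner sum is a partial Haar expansion over the subcollection $\mathcal{I}:=\{I_P:P\in T\}$ of $\D_+$. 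Invoking \eqref{e14} applied to $g$ and using $|e_W|\equiv 1$ concludes
$$\|\O_Tf\|_p\;=\;\Big\|\sum_{I\in\mathcal{I}}\langle g,h_I\rangle\,h_I\Big\|_p\;\lesssim\;\|g\|_p\;=\;\|f\|_p.$$

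The main obstacle will be the combinatorial bookkeeping that extracts the identity $W_{P_u}=\epsilon_P h_{I_P}e_W(\cdot\otimes \xi_T)$ cleanly from the $\oplus/\otimes$ formalism; once one recognizes that the ``low frequency'' tail $\delta_P$ of $\xi_T$ gets absorbed into an $I_P$-constant sign while the scale-$2^{-j}$ part produces the Haar sign, the rest is a soft reduction to the classical Haar $L^p$ estimate.
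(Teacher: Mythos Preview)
Your argument is correct and takes a genuinely different route from the paper's. The paper works on the frequency side: since $T$ is lacunary the intervals $\omega_{P_u}$ are pairwise disjoint and sit within their own length of $\xi_T$, so the Walsh Littlewood--Paley theorem applied first to $\O_Tf$ and then back to $f$ gives
\[
\|\O_Tf\|_p\;\lesssim\;\Big\|\Big(\sum_{P\in T}\frac{|\langle f,W_{P_u}\rangle|^2}{|I_P|}1_{I_P}\Big)^{1/2}\Big\|_p\;\lesssim\;\|f\|_p.
\]
You instead work on the time side: the Walsh modulation $e_W(\cdot\otimes\xi_T)$ conjugates each $W_{P_u}$ into a signed Haar function $\epsilon_P h_{I_P}$, so $\O_T$ becomes a partial Haar projection acting on $g=f\,e_W(\cdot\otimes\xi_T)$, and \eqref{e14} closes the estimate. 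Your approach is slightly more elementary in that it avoids invoking the Walsh Littlewood--Paley theorem and relies only on the classical Haar bound; the paper's approach, by contrast, stays closer to the phase-plane heuristic and translates more directly to the Fourier setting, where no exact modulation identity of your type is available (only an approximate one, with Schwartz tails). Both arguments ultimately exploit the same lacunarity, just read off in dual coordinates.
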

\begin{proof}

Recall that $\widehat{W_{P_u}}$ is supported in  $\omega_{P_u}$. Call $\Omega$ the collection of all intervals $\omega_{P_u},\; P\in T$ and note that they are pairwise disjoint and sit within distance smaller than their length from $\xi_T$. By the Walsh version of the Littlewood-Paley Theorem applied to $\O_Tf$ and then to $f$ we have
$$\|O_Tf\|_p\lesssim \|(\sum_{\omega\in\Omega}|\F_W^{-1}(1_\omega\F_W(\O_Tf))|^2)^{1/2}\|_p=$$
$$=\|(\sum_{\omega\in\Omega}|\sum_{P\in T:\,|\omega_{P_u}|=|\omega|}\langle f,W_{P_u}\rangle W_{P_u}|^2)^{1/2}\|_p=\|(\sum_{P\in T}\frac{|\langle f,W_{P_u}\rangle|^2}{|I_P|}1_{I_P})^{1/2}\|_p\lesssim \|f\|_p.$$
\end{proof}

The next lemma shows that the operator $C_T$ restricted to an  tree $T$ is a maximal function, if $T$ is overlapping and a maximal truncation of a discrete  Calder\'on-Zygmund operator, if $T$ is lacunary.

\begin{lemma}[Single tree estimate: maximal truncations]
\label{furfy58t78t585igutgutih}
Let $T$ be a tree. Then for each $1<p<\infty$
$$\|\sum_{P\in T}\langle f,W_{P_u}\rangle W_{P_u}(x)1_{\omega_{P_l}}(N(x))\|_{L^p(\R_+)}\lesssim \|f\|_p.$$
\end{lemma}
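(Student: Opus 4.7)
The plan is to reduce to the two extreme cases via the splitting $T = T_l \cup T_o$ recorded in the paper just after the definition of trees. By linearity of $C_T$ in $T$ and the triangle inequality, it then suffices to prove the inequality separately when $T$ is purely overlapping and when $T$ is purely lacunary. I will treat the first case by showing $|C_{T_o} f| \lesssim Mf$ pointwise, and the second by writing $C_{T_l} f$ as a maximal truncation of $\O_{T_l} f$.

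In the overlapping case, the first step is to verify that $\{\omega_{P_l}\}_{P \in T_o}$ is a pairwise disjoint family. Indeed, whenever $|I_P| < |I_{P'}|$ in $T_o$ both $\omega_P$ and $\omega_{P'}$ contain $\xi_T$, so by dyadic nesting $\omega_{P'} \subset \omega_{P_u}$, and in particular $\omega_{P'_l} \cap \omega_{P_l} = \emptyset$. Consequently for each $x$ at most one $P \in T_o$ contributes to $C_T f(x)$, namely the unique $P$ with $N(x) \in \omega_{P_l}$ (if any), and that contribution vanishes unless $x \in I_P$. On $I_P$ one has $|W_{P_u}(x)| = |I_P|^{-1/2}$ and $|\langle f, W_{P_u}\rangle| \le |I_P|^{-1/2} \int_{I_P}|f|$, so the single surviving term is bounded by $|I_P|^{-1}\int_{I_P}|f| \le Mf(x)$. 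The $L^p$ estimate then follows from boundedness of the Hardy--Littlewood maximal function.

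In the lacunary case, $\xi_T$ lies in every $\omega_{P_l}$, so these intervals are nested and the condition $N(x) \in \omega_{P_l}$ reduces to $|I_P| \le 2^{-k(x)}$, where $k(x)$ is the smallest integer for which the dyadic interval of length $2^{k(x)}$ containing $\xi_T$ also contains $N(x)$. Setting $S_k f := \sum_{P \in T_l,\, |I_P| > 2^{-k}} \langle f, W_{P_u}\rangle W_{P_u}$, this gives the pointwise identity
\[
C_T f(x) \;=\; \O_T f(x) - S_{k(x)} f(x).
\]
Lemma~\ref{lemasingtree1} controls $\O_T f$ in $L^p$, so the lacunary case is reduced to the uniform maximal bound $\bigl\|\sup_k |S_k f|\bigr\|_p \lesssim \|f\|_p$.

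For this last step I would aim for the pointwise estimate $\sup_k |S_k f(x)| \lesssim Mf(x)$. The upper tiles $\{P_u : P \in T_l\}$ are pairwise disjoint, and the sub-collection with $|I_P| > 2^{-k}$ covers a region that, via Lemma~\ref{2diffways}, may be re-tiled as a finite disjoint union of ``fat'' tiles of spatial length $\ge 2^{-k}$ whose frequency intervals sit adjacent to $\xi_T$. Wave packets carried by such fat tiles are constant on dyadic intervals of length $2^{-k}$, so by \eqref{33} the phase-space projection $S_k f$ is dominated at each point by a dyadic average of $|f|$ at scale $2^{-k}$, hence by $Mf(x)$ uniformly in $k$. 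The main obstacle is carrying out this re-tiling precisely, since the top frequency $\xi_T$ is non-dyadic: one must check that the fat tiles produced by Lemma~\ref{2diffways} actually correspond to averaging operators and that taking the supremum over $k$ genuinely collapses to the dyadic (and hence Hardy--Littlewood) maximal function.
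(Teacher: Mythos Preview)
Your treatment of the overlapping case is essentially the same as the paper's, and correct (with the minor caveat that the family $\{\omega_{P_l}\}_{P\in T_o}$ is not literally pairwise disjoint when two bitiles share the same scale, but for a fixed $x$ at most one spatial interval at each scale contains $x$, so the conclusion that a single bitile contributes stands).

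The lacunary case, however, has a real gap. Your goal is the pointwise bound $\sup_k|S_kf(x)|\lesssim Mf(x)$, and you propose to get it by re-tiling $\bigcup_{|I_P|>2^{-k}}P_u$ with ``fat'' tiles of spatial scale $\ge 2^{-k}$. The problem is that such tiles have frequency intervals sitting next to $\xi_T$, \emph{not} containing $0$; the associated wave packets are therefore oscillating Walsh functions, not indicator functions, and the projection $\Pi_p f$ is not an average of $f$. So \eqref{33} does not turn $S_kf$ into a dyadic average of $|f|$, and the argument as written does not yield $|S_kf|\lesssim Mf$. (Crudely, for each $x$ up to $k$ tiles contribute and the triangle inequality only gives $|S_kf(x)|\lesssim k\,Mf(x)$; removing the factor $k$ requires genuine cancellation that re-tiling alone does not provide.)

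The paper avoids this difficulty by not splitting off $S_kf$ at all. It keeps the sum $\sum_{|\omega_P|\ge 2^{k(x)}}\langle f,W_{P_u}\rangle W_{P_u}(x)$ and observes that the intervals $\omega_{P_u}$ are linearly ordered to one side of $\xi_T$, so there is a single dyadic interval $\omega=\omega(x)$ containing exactly those $\omega_{P_u}$ with $|\omega_P|\ge 2^{k(x)}$. Hence $C_Tf(x)=\F_W^{-1}\bigl(1_\omega\,\F_W(\O_Tf)\bigr)(x)$, and by \eqref{e12} this is a Walsh convolution of $\O_Tf$ with a normalized indicator, giving the pointwise bound $|C_Tf(x)|\lesssim M(\O_Tf)(x)$ rather than $Mf(x)$. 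Lemma~\ref{lemasingtree1} then finishes. If you want to salvage your decomposition $C_Tf=\O_Tf-S_{k(x)}f$, the correct target is $|S_kf|\lesssim M(\O_Tf)$, obtained by the same multiplier trick applied to the complementary interval.
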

\begin{proof}
It suffices to prove the lemma when $T$ is either lacunary or overlapping.
We start with the lacunary case.
Note that for each $x$ there is $k=k(x)$ such that
$$\sum_{P\in T}\langle f,W_{P_u}\rangle W_{P_u}(x)1_{\omega_{P_l}}(N(x))=\sum_{P\in T\atop_{|\omega_P|\ge 2^{k}}}\langle f,W_{P_u}\rangle W_{P_u}(x).$$
Note that if $P,P'\in T$ and $|\omega_P|>|\omega_{P'}|$ then $\omega_{P_u}$ is disjoint from and sits at the right of $\omega_{P_u'}$. Thus there will exist an interval $\omega=\omega(x)$ which contains all $\omega_{P_u}$ with $|\omega_P|\ge 2^{k}$ and which will have empty intersection with all $\omega_{P_u}$ satisfying $|\omega_P|< 2^{k}$. We can thus write
$$\sum_{P\in T\atop_{|\omega_P|\ge 2^{k}}}\langle f,W_{P_u}\rangle W_{P_u}(x)=\F_W^{-1}[1_\omega\F_W[\O_Tf]](x).$$
Using \eqref{e12} and the fact that $x\oplus [0,|\omega|^{-1}]$ is an interval of length $|\omega|^{-1}$ containing $x$ we get
$$|\sum_{P\in T\atop_{|\omega_P|\ge 2^{k}}}\langle f,W_{P_u}\rangle W_{P_u}(x)|\le |\omega|\int_{x\oplus [0,|\omega|^{-1}]}|\O_Tf(y)|dy\lesssim M(\O_Tf)(x).$$
It now suffices to apply Lemma \ref{lemasingtree1}.

Assume next that $T$ is overlapping. This case is immediate by noting that  for each $x$
$$|\sum_{P\in T}\langle f,W_{P_u}\rangle W_{P_u}(x)1_{\omega_{P_l}}(N(x))|=|\langle f,W_{P_u}\rangle W_{P_u}(x)|\lesssim M(f)(x),$$
where $P$ is the unique (possibly nonexisting) bitile with $(x,N(x))\in P_l$.
\end{proof}

We remark that \eqref{e16} implies that whenever $T$ is convex and $P\in T$ we have $\langle f,W_{P_u}\rangle=\langle \Pi_Tf,W_{P_u}\rangle$. Thus the result of Lemma \ref{furfy58t78t585igutgutih} can also be written  in a localized form
\begin{equation}
\label{e17}
\|C_Tf\|_p\lesssim \|\Pi_Tf\|_p
\end{equation}

We close this section with proving $L^p$ estimates for the phase space projection associated with a tree
\begin{proposition}
\label{hfuyhrufywiei3u884u8}
For each convex tree $T$ and each $1< p\le \infty$
$$\|\Pi_Tf\|_{p}\lesssim\|f\|_p.$$
 \end{proposition}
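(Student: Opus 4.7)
Since $\Pi_T$ is an orthogonal projection, the bound $\|\Pi_T f\|_2\le\|f\|_2$ is immediate. Self-adjointness of $\Pi_T$ yields $\|\Pi_T\|_{L^p\to L^p}=\|\Pi_T\|_{L^{p'}\to L^{p'}}$, so by Riesz--Thorin interpolation between $L^2$ and $L^\infty$ together with duality, the whole range $1<p\le\infty$ reduces to proving the endpoint bound $\|\Pi_T f\|_\infty\lesssim\|f\|_\infty$.

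For a finite convex tree $T$, the plan is to exhibit a tiling $\p$ of $A(T)$ in which each $x\in\R_+$ belongs to the spatial support of at most two tiles. Granted such $\p$, the pointwise estimate $|\langle f,W_p\rangle W_p(x)|\le\|f\|_\infty 1_{I_p}(x)$, which follows from $|W_p|\le|I_p|^{-1/2}1_{I_p}$ and $\int|W_p|=|I_p|^{1/2}$, immediately yields $|\Pi_T f(x)|\le 2\|f\|_\infty$.

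The tiling comes from a structural property of $\D(T):=\{I_P:P\in T\}$. Since $\omega_P$ is the unique dyadic interval of length $2/|I_P|$ containing $\xi_T$, the assignment $P\mapsto I_P$ identifies $T$ bijectively with $\D(T)$. Convexity combined with Fefferman's order forces $\D(T)$ to be closed under intermediates: if $I_1\subsetneq I_2$ both lie in $\D(T)$ and $I$ is dyadic with $I_1\subsetneq I\subsetneq I_2$, then the bitile $P_I$ associated to $I$ (through $\xi_T$) satisfies $P_{I_1}\le P_I\le P_{I_2}$, so $P_I\in T$. Consequently, the maximal proper dyadic subintervals of $I_{P_0}$ lying in $\D(T)$ have length exactly $|I_{P_0}|/2$. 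Defining the leaf $L_{P_0}:=I_{P_0}\setminus\bigcup\{I\in\D(T):I\subsetneq I_{P_0}\}$, each nonempty $L_{P_0}$ is therefore either $I_{P_0}$ itself or one of its two dyadic halves, and the nonempty leaves partition $\bigcup_{P\in T}I_P$.

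Above each leaf $L_{P_0}$ the vertical slice of $A(T)$ equals $\omega_{P_0}$, so the block $L_{P_0}\times\omega_{P_0}$ has area $1$ (a single tile) or area $2$ (the bitile $P_0$, split as $P_{0,l}\cup P_{0,u}$). The union of these pieces over all $P_0$ with nonempty leaf gives the desired tiling, and each $x$ lies in at most one leaf and so in at most two tiles. For countably infinite $T$, approximate by $T_n:=\{P\in T:|I_P|\ge 2^{-n}\}$, which is finite and convex with $T_n\uparrow T$, so $\Pi_{T_n}f\to\Pi_T f$ in $L^2$; extracting an a.e.\ convergent subsequence and applying Fatou's lemma with the uniform $L^p$ bound transfers the estimate to $T$. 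The main obstacle is establishing the structural lemma describing $\D(T)$ and its leaves, in which convexity and Fefferman's order interact decisively.
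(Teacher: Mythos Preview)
Your proof is correct and rests on the same key observation as the paper's: for each $x$ the value $\Pi_Tf(x)$ is determined by (the two tiles of) the minimal bitile $P_0\in T$ with $x\in I_{P_0}$. Your explicit leaf tiling is a hands-on construction of exactly the tile decomposition the paper obtains by invoking Lemma~\ref{2diffways}; the ``closure under intermediates'' of $\D(T)$ is precisely where convexity enters in both arguments.

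The one substantive difference is in the final step. You deduce only $|\Pi_Tf(x)|\le 2\|f\|_\infty$ from the two-tile bound, and then reach the full range $1<p\le\infty$ via the $L^2$ isometry, Riesz--Thorin, and duality. The paper instead observes that the same two-tile localization gives the pointwise inequality
\[
|\Pi_Tf(x)|=|\Pi_{P_0}f(x)|\le |I_{P_0}|^{-1/2}\bigl(|\langle f,W_{P_{0,u}}\rangle|+|\langle f,W_{P_{0,l}}\rangle|\bigr)\le \frac{2}{|I_{P_0}|}\int_{I_{P_0}}|f|\lesssim Mf(x),
\]
which yields all $L^p$, $1<p\le\infty$, in one stroke via Hardy--Littlewood. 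This is shorter, and the pointwise identity $\Pi_Tf(x)=\Pi_{P_0}f(x)$ together with the maximal bound is reused verbatim in later arguments (see the proof of Proposition~\ref{dkjuyer8fu4589t75899rio3t}). So your route works, but the paper's packaging is more economical and produces a stronger intermediate estimate that the rest of the paper needs anyway.
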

\begin{proof}
We first observe that for each $x\in I_T$, $\Pi_Tf(x)=\Pi_Pf(x)$ where $P$ is the minimal bitile in $T$ with $x\in I_P$. Indeed, according to Lemma \ref{2diffways}, there exists a collection $\p''$ of pairwise disjoint tiles which includes $P_u$ and $P_l$ such that $$\Pi_Tf(x)=\Pi_{P}f(x)+\sum_{p\in \p''\setminus\{P_u,P_l\}}\Pi_{p}f(x).$$
But $P$ is minimal, hence $\{x\}\cap I_p\cap I_P=\emptyset$ for each  $p\in \p''\setminus\{P_u,P_l\}$.

Finally, note that for each bitile $P$ and each $x\in I_P$ we have $$|\Pi_Pf(x)|\le |I_P|^{-1/2}(|\langle f,W_{P_u}\rangle|+|\langle f,W_{P_l}\rangle|)\le \frac{2}{|I_P|}\int_{I_P}|f|\lesssim Mf(x).$$
\end{proof}

\section{Size and pointwise estimates outside exceptional sets}
\label{sec:3}
We are now ready to see the first proof of Theorem \ref{thm:main}. This argument bears some resemblance to the original argument of Carleson \cite{Car}. A form of this argument has been used in \cite{MTT1} in the Walsh case, while the proof of the Fourier case is hidden in \cite{DLTT}. This type of argument proved instrumental in applications to the Return Times Theorem \cite{DLTT} and the directional Hilbert transform in the plane \cite{DeDi}.

The main tool is the {\em size} of a collection of bitiles, a concept introduced by Lacey and Thiele \cite{BHT} in the Fourier case and by Thiele\footnote{it is there referred to as {\em density} } \cite{Thi1} in the Walsh case.

\begin{definition}
The size of a collection $\P$ of bitiles with respect to a function $f:\R_+\to \C$ is defined as
$$\size_f(\P)=\sup_{P\in\P}\frac{\|\Pi_Pf\|_2}{|I_P|^{1/2}}.$$
\end{definition}
The next two propositions record some of the key features of $\size_f(\P)$.
\begin{proposition}
\label{dkjuyer8fu4589t75899rio3t}
Let $T$ be a convex tree. Then for each $1\le p\le \infty$
\begin{equation}
\label{e18}
\|\Pi_Tf\|_p\lesssim \size_f(T)|I_T|^{1/p}
\end{equation}
\end{proposition}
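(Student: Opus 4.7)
My plan is to reduce the full range $1\le p\le \infty$ to the single endpoint $p=\infty$, exploiting the fact that $\Pi_T f$ is supported in $I_T$. First I would verify this support property: if $\mathbf{p}$ is a tile decomposition of $A(T)$ as in \eqref{e10}, then for each $p\in\mathbf{p}$ any $x\in I_p$ paired with any $\xi\in\omega_p$ lies in some bitile $P\in T$, which forces $x\in I_P\subset I_T$. Hence every $W_p$, and so $\Pi_T f$, is supported in $I_T$, and H\"older's inequality immediately yields $\|\Pi_T f\|_p\le \|\Pi_T f\|_\infty|I_T|^{1/p}$ for every $p\in[1,\infty]$. This reduces the target inequality to the single pointwise/$L^\infty$ claim $\|\Pi_T f\|_\infty\lesssim \size_f(T)$.

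For the $L^\infty$ bound I would recycle the local identification from the proof of Proposition~\ref{hfuyhrufywiei3u884u8}. Fix $x\in I_T$; by convexity the bitiles $P\in T$ with $x\in I_P$ form a totally ordered (nested) chain, and if $P$ denotes its minimal element then $\Pi_T f(x)=\Pi_P f(x)$ (the argument from Proposition~\ref{hfuyhrufywiei3u884u8} shows the remaining tiles in the refined decomposition are spatially disjoint from $x$). Using the pointwise bounds $|W_{P_u}(x)|,|W_{P_l}(x)|\le |I_P|^{-1/2}$ together with the orthonormality of $\{W_{P_u},W_{P_l}\}$ and Cauchy--Schwarz, one obtains
\begin{equation*}
|\Pi_P f(x)|\le |I_P|^{-1/2}\bigl(|\langle f,W_{P_u}\rangle|+|\langle f,W_{P_l}\rangle|\bigr)\le \sqrt{2}\,\frac{\|\Pi_P f\|_2}{|I_P|^{1/2}}\le \sqrt{2}\,\size_f(T),
\end{equation*}
and taking the supremum over $x\in I_T$ finishes the estimate.

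The only real subtlety is the existence of the minimal bitile in that chain when $T$ is infinite; in all the applications to follow $T$ is finite, and otherwise one replaces $T$ by an exhausting sequence of finite convex subtrees and takes a limit, the bound being uniform in the subtree. Aside from this, no tool beyond the tile decomposition machinery of Proposition~\ref{hfuyhrufywiei3u884u8} is needed and the entire proof is essentially a one-line reduction from $L^p$ to $L^\infty$ combined with a tile-by-tile application of Cauchy--Schwarz.
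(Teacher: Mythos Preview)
Your proposal is correct and follows essentially the same route as the paper: reduce to the $L^\infty$ endpoint (using that $\Pi_T f$ is supported in $I_T$), invoke the identification $\Pi_T f(x)=\Pi_P f(x)$ from Proposition~\ref{hfuyhrufywiei3u884u8} with $P$ the minimal bitile containing $x$, and then bound $|\Pi_P f(x)|$ by $\sqrt{2}\,\|\Pi_P f\|_2/|I_P|^{1/2}$ via Cauchy--Schwarz. The paper presents exactly this argument, only more tersely; your added remarks on the support property and on the finiteness of $T$ are reasonable clarifications but not substantive departures.
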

\begin{proof}
It suffices to prove that $\|\Pi_Tf\|_{\infty}\lesssim \size_f(T)$. The proof of Proposition \ref{hfuyhrufywiei3u884u8} shows that for each $x\in I_T$, $\Pi_Tf(x)=\Pi_Pf(x)$ where $P$ is the minimal bitile in $T$ with $x\in I_P$.
To close the argument, observe that for each bitile $P$ we have $$\|\Pi_Pf\|_{\infty}\le |I_P|^{-1/2}(|\langle f,W_{P_u}\rangle|+|\langle f,W_{P_l}\rangle|)$$$$\le |I_P|^{-1/2}\sqrt{2}(|\langle f,W_{P_u}\rangle|^2+|\langle f,W_{P_l}\rangle|^2)^{1/2}=|I_P|^{-1/2}\sqrt{2}\|\Pi_Pf\|_{2}$$

\end{proof}
\begin{proposition}\label{djhfdftyert}For each $\P$ and $f:\R_+\to\C$,
$$\size_f(\P)\lesssim \sup_{P\in\P}\inf_{x\in I_P}M(f)(x)$$
\end{proposition}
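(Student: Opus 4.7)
The plan is to prove the pointwise estimate
$$\frac{\|\Pi_Pf\|_2}{|I_P|^{1/2}}\lesssim \inf_{x\in I_P}M(f)(x)$$
for each individual bitile $P\in\P$, and then take the supremum over $P\in\P$. All the ingredients are already in place; there is no real obstacle beyond unpacking definitions.

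First I would compute $\|\Pi_Pf\|_2$ exactly. Since $P_u$ and $P_l$ are disjoint tiles, \eqref{e11} tells us that $W_{P_u},W_{P_l}$ are orthonormal, so
$$\|\Pi_Pf\|_2^2=|\langle f,W_{P_u}\rangle|^2+|\langle f,W_{P_l}\rangle|^2.$$

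Next, from the formula $W_p(x)=|I_p|^{-1/2}w_0\bigl(\tfrac{x-l(I_p)}{|I_p|}\bigr)e_W(x\otimes l(\omega_p))$ it follows that $|W_{P_u}|,|W_{P_l}|$ are bounded by $|I_P|^{-1/2}\mathbf 1_{I_P}$. Hence
$$|\langle f,W_{P_u}\rangle|,\ |\langle f,W_{P_l}\rangle|\le |I_P|^{-1/2}\int_{I_P}|f|,$$
which plugged back gives $\|\Pi_Pf\|_2\le \sqrt 2\,|I_P|^{-1/2}\int_{I_P}|f|$, and therefore
$$\frac{\|\Pi_Pf\|_2}{|I_P|^{1/2}}\lesssim \frac{1}{|I_P|}\int_{I_P}|f|.$$

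Finally, for every $x\in I_P$ the dyadic interval $I_P$ is contained in a ball of radius $|I_P|$ centered at $x$, so the dyadic average $\frac{1}{|I_P|}\int_{I_P}|f|$ is $\lesssim M(f)(x)$. Taking the infimum over $x\in I_P$, and then the supremum over $P\in\P$, yields the claimed bound on $\size_f(\P)$.
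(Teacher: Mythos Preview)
Your proof is correct and follows essentially the same route as the paper's own argument: compute $\|\Pi_Pf\|_2=(|\langle f,W_{P_u}\rangle|^2+|\langle f,W_{P_l}\rangle|^2)^{1/2}$, bound each inner product by $|I_P|^{-1/2}\int_{I_P}|f|$, and then dominate the resulting average by $\inf_{x\in I_P}M(f)(x)$. You have simply spelled out in more detail what the paper records in a single displayed inequality.
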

\begin{proof}
This is immediate, since for each $P$ $$\|\Pi_Pf\|_2=(|\langle f,W_{P_u}\rangle|^2+|\langle f,W_{P_l}\rangle|^2)^{1/2}\le \frac{\sqrt{2}}{|I_P|^{1/2}}\int_{I_P}|f|.$$
\end{proof}

\begin{definition}A forest $\F$ is a finite collection of bitiles which consists of a disjoint union of convex trees.
\end{definition}
We note that each finite convex collection $\P$ of bitiles can be turned into a forest, possibly in more than one way. Indeed, start with a maximal element $P$ from $\P$, and construct the maximal tree in $\P$ with top bitile $P$. Remove this tree $T$ from $\P$ and note that $\P\setminus T$ remains convex. Repeat the procedure with $\P\setminus T$ replacing $\P$, to select the next tree. Iterate this until all the bitiles from the original $\P$ are selected.

We will sometimes use the notation $N_\F$ for the counting function of a forest $\F$
$$N_\F(x)=\sum_{T\in\F}1_{I_T}(x).$$

A key idea in many  of the approaches to Carleson's Theorem is to split the bitiles into forests with a certain size.

\begin{lemma}

 \label{sizelemma}
Let $\P$ be a finite convex collection of bitiles and let $f:\R_+\to\C$. Then
$
\P= \P_{hi} \cup \P_{{lo}}
,$
such that \begin {itemize}
\item[$\cdot$]both $\P_{{lo}}$ and $ \P_{hi} $ are convex
\item[$\cdot$]$\size_f(\P_{{lo}} )\leq \frac12\size_f(\P)$:
\item[$\cdot$] $\P_{hi}$ is a convex forest with trees $T\in\F$ satisfying
$$
\sum_{T\in\F}  |I_T|\lesssim \size_f(\P)^{-2}\|f\|^2_2.
$$
\end{itemize}
\end{lemma}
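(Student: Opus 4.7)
The plan is to peel off large-size trees greedily. Put $\sigma:=\size_f(\P)$ and initialize a stock equal to $\P$. While there is some bitile in the stock with $\|\Pi_Pf\|_2/|I_P|^{1/2}\ge\sigma/2$, pick such a $P^*_T$ that is $\le$-maximal among these candidates, form the tree $T:=\{P\in\mathrm{stock}:P\le P^*_T\}$, append it to a forest $\F$, and remove $T$ from the stock. Iterate; once the procedure halts, set $\P_{hi}:=\bigcup_{T\in\F}T$ and let $\P_{lo}$ be the final stock. The stopping criterion immediately gives $\size_f(\P_{lo})\le\sigma/2$.

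Next I would verify convexity. If $P,P''\in T$ and $P\le P'\le P''$ with $P'\in\P_{all}$, then $P'\in\P$ by convexity of $\P$; an induction on the step of removal shows $P'$ was still in the stock when $T$ was chosen (otherwise $P$ or $P''$ would have been removed earlier), so $P'\le P^*_T$ places $P'$ in $T$. The same reasoning handles $\P_{lo}$: if $P,P''\in\P_{lo}$, $P\le P'\le P''$ and $P'$ were absorbed into some $T_i$, then $P'\le P^*_{T_i}$ would force $P\le P^*_{T_i}$, and since $P$ remains in the stock throughout, $P$ would be in $T_i$, contradicting $P\in\P_{lo}$.

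The heart of the argument is the packing bound. The maximal selection forces the tops $\{P^*_T\}_{T\in\F}$ to be pairwise $\le$-incomparable: a relation $P^*_{T_i}<P^*_{T_j}$ with $i<j$ would make $P^*_{T_j}$ a candidate at step $i$ of higher $\le$-rank than $P^*_{T_i}$, violating maximality; the reverse $P^*_{T_j}<P^*_{T_i}$ would sweep $P^*_{T_j}$ into $T_i$. A short case analysis using $|I_P||\omega_P|=2$ and the dyadic nature of the $\omega$-intervals upgrades this incomparability to pairwise disjointness, as tiles of area one, of the enlarged family $\{P^*_{T,u},P^*_{T,l}\}_{T\in\F}$: any tile-level coincidence such as $\omega_{P^*_{T_j,l}}=\omega_{P^*_{T_i,u}}$ would force $\omega_{P^*_{T_j}}$ to be a non-dyadic interval. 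Bessel's inequality then yields
\begin{equation*}
\sum_{T\in\F}\|\Pi_{P^*_T}f\|_2^2=\sum_{T\in\F}\bigl(|\langle f,W_{P^*_{T,u}}\rangle|^2+|\langle f,W_{P^*_{T,l}}\rangle|^2\bigr)\le\|f\|_2^2,
\end{equation*}
and combining with $\|\Pi_{P^*_T}f\|_2^2\ge(\sigma/2)^2|I_T|$ gives $\sum_{T\in\F}|I_T|\lesssim\sigma^{-2}\|f\|_2^2$.

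The step I anticipate as the main obstacle is the geometric verification that the enlarged tile family is pairwise disjoint: the dyadic arithmetic needs enough care to exclude near-miss configurations where one tile's upper half would coincide with or nest inside another tile's lower half, and it is precisely here that the exact shape of the $\le$-order on the Walsh plane is used.
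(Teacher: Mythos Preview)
Your argument is correct and follows the paper's proof essentially verbatim: the same greedy selection of $\le$-maximal large-size tops, the same tree and convexity verifications, and the same Bessel-type packing estimate. The one place you diverge is in the step you flag as the ``main obstacle'': the paper does not break the tops into upper and lower tiles and run a case analysis, but simply invokes the fact (recorded immediately after the definition of $\le$) that two bitiles are comparable under $\le$ if and only if they intersect as sets, so pairwise incomparability of the selected tops $t$ gives pairwise disjointness of the bitiles directly, and then \eqref{e11} yields orthogonality of the $\Pi_t f$; no near-miss configurations need to be ruled out by hand.
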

\begin{proof} This is a recursive procedure. Set $\P_{{stock}}:=\P$ and $\F=\emptyset$. Select a maximal  bitile  $t \in \P_{{stock}}$ such that
$$
\|\Pi_t f\|_2|I_t|^{-1/2} > \frac{\size_f(\P)}{2}
$$
Define
$$
T(t) = \{P \in \P_{{stock}}: P \le t\}.
 $$
and note that since $\P_{{stock}}$ is convex, both $\P_{{stock}}\setminus T(t)$ and the tree $T(t)$ will be convex. Add $T(t)$ to the family $\F$. Reset  $\P_{{stock}}:=\P_{{stock}}\setminus T(t)$, and restart the procedure.

The algorithm is over when there is no $t$ to be selected. Then define $ \P_{hi} $ to consist of the union of  all bitiles in all the trees from $\F$, and let $\P_{{lo}}=\P\setminus\P_{hi}$.

  The first two needed properties as well as the convexity of $\F$ are quite immediate.
 By maximality the selected bitiles $t$ are pairwise disjoint, and thus the functions $\Pi_tf$ are pairwise orthogonal, thanks to \eqref{e11}. It follows that
$$
\sum_{T \in \F}|I_T|=\sum_{t} |I_t| \leq 4 \size_f(\P)^{-2} \sum_{t} \|\Pi_tf\|_2^2  \leq 4\,\size_f(\P)^{-2}\|f\|^2_2.
$$
\end{proof}

We can iterate the lemma to obtain
\begin{proposition}[Size decomposition]
\label{2erbyu76i87o}Let $\P$ be a finite convex collection of bitiles. Then
$$
\P= \bigcup_{2^{-n}\le \size_f(\P)}\P_{n} \cup \P_{null}
,$$
such that \begin {itemize}
\item[$\cdot$]$\size_f(\P_{n} )\leq 2^{-n}$:
\item[$\cdot$] $\P_{n}$ is a convex forest with trees $T\in\F_n$ satisfying
\begin{equation}
\label{e19}
\sum_{T\in\F_n}  |I_T|\lesssim 2^{2n}\|f\|^2_2,
\end{equation}

\item [$\cdot$]$\Pi_Pf\equiv 0$ for each $P\in\P_{null}$
\end{itemize}
\end{proposition}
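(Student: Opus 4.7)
The plan is to iterate Lemma \ref{sizelemma}. Set $\P^{(0)}:=\P$ and, for $k\ge 0$, apply the lemma to $\P^{(k)}$ to produce a decomposition $\P^{(k)}=\P^{(k)}_{hi}\cup\P^{(k+1)}$ (disjoint), where $\P^{(k+1)}:=\P^{(k)}_{lo}$. Write $\sigma_k:=\size_f(\P^{(k)})$. By the lemma $\sigma_{k+1}\le \sigma_k/2$, so the sequence $(\sigma_k)$ is strictly contracting as long as it is positive. Since $\P$ is finite, the values $\sigma_k$ lie in a finite set; hence there is a first index $K$ with $\sigma_K=0$, which forces $\Pi_Pf\equiv 0$ for every $P\in\P^{(K)}$. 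Take $\P_{null}:=\P^{(K)}$. Telescoping gives the disjoint union
\begin{equation*}
\P=\P^{(0)}_{hi}\cup\P^{(1)}_{hi}\cup\cdots\cup\P^{(K-1)}_{hi}\cup\P_{null}.
\end{equation*}

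To match the index $n$ in the statement, note that the halving $\sigma_{k+1}\le\sigma_k/2$ means each $k<K$ admits a unique integer $n(k)$ with $\sigma_k\in(2^{-n(k)-1},2^{-n(k)}]$, and moreover the map $k\mapsto n(k)$ is injective (one application of the lemma cannot keep $\sigma$ inside the same dyadic window). For each $n$ with $2^{-n}\le\sigma_0$, declare $\P_n:=\P^{(k)}_{hi}$ if $n=n(k)$ for some (necessarily unique) $k$, and $\P_n:=\emptyset$ otherwise. Then $\P_n\subset\P^{(k)}$ immediately gives $\size_f(\P_n)\le\sigma_k\le 2^{-n}$, while the forest-counting bound from Lemma \ref{sizelemma}, combined with $\sigma_k>2^{-n-1}$, yields
\begin{equation*}
\sum_{T\in\F_n}|I_T|\lesssim \sigma_k^{-2}\|f\|_2^2 < 4\cdot 2^{2n}\|f\|_2^2,
\end{equation*}
which is \eqref{e19} after absorbing the $4$ into $\lesssim$. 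Convexity and the forest structure of $\P_n$ are inherited directly from Lemma \ref{sizelemma}.

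The only mild subtlety is the bookkeeping between the running size $\sigma_k$ (the natural quantity produced by the iteration) and the dyadic scale $2^{-n}$ required by the statement. These are reconciled by the halving property: it both makes the assignment $k\mapsto n(k)$ well defined and injective, and confines each relevant $\sigma_k$ to a dyadic window comparable to $2^{-n}$. Gaps in the index set, that is values of $n$ for which no $\sigma_k$ lands in $(2^{-n-1},2^{-n}]$, are harmlessly absorbed by the convention $\P_n=\emptyset$, and no new orthogonality argument is needed beyond what is already built into Lemma \ref{sizelemma}.
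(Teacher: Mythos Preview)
Your proof is correct and follows exactly the approach the paper indicates: iterating Lemma \ref{sizelemma} until the residual collection has size zero, and relabeling the high pieces by the dyadic scale of the running size. The paper itself gives no details beyond the sentence ``We can iterate the lemma to obtain'', so your write-up is a faithful expansion of that remark; the only cosmetic point is that the smallest index $n(0)$ may satisfy $2^{-n(0)}\ge\size_f(\P)$ rather than $\le$, but this is the usual harmless off-by-one at the top scale and is absorbed in the $\lesssim$ used throughout.
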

We are now ready for the main line of the argument.

\begin{proof}
[of Theorem \ref{thm:main}]
By using restricted type interpolation and a limiting argument, it suffices to prove
\begin{equation}
\label{e15}
|\{x:|C_{\P}f(x)|\gtrsim \lambda\}|\lesssim\frac1{\lambda^p}|E|
\end{equation}
for each $E\subset \R_+$ with finite measure, each $|f|\le 1_E$, each finite convex $\P\subset \P_{all}$, for each $\lambda>1$, $4<p<\infty$ and also for each $0<\lambda\le 1$, $1<p<\infty$. The implicit constant in the inequality \eqref{e15} will only depend on $p$.

We start with the simpler case $\lambda>1$, $4<p<\infty$. Note that the bound $\size_{f}(\P)\lesssim 1$ follows from Proposition \ref{djhfdftyert}. Let $\P_n, \F_n$ with $2^{-n}\lesssim 1$, be the collections from Proposition \ref{2erbyu76i87o}. For each $T\in \F_n$ with top bitile $P_T\in T$, define the saturation $T^*=\{P\in\P_n:P\le P_T\}$. Note that the trees $T^*$ remain convex, but in general they are not pairwise disjoint. Call $\F_n^*$ the collection of the trees $T^*$, and note that $I_{T^*}=I_T$.

Define the exceptional set
$$F=\bigcup_{2^{-n}\lesssim 1}\bigcup_{T^*\in \F_n^*}\{x: |C_{T^*}f(x)|>\lambda 2^{-n/2}\}.$$
Note that by \eqref{e17} and \eqref{e18} we have
$$|\{x: |C_{T^*}f(x)|>\lambda 2^{-n/2}\}|\lesssim |I_T|(\lambda2^{n/2})^{-p}.$$
Combining this with  \eqref{e19} and $p>4$, we obtain $|F|\lesssim \frac1{\lambda^p}|E|$.

Thus it remains to prove that
$|C_{\P}f(x)|\lesssim \lambda$ on $F^c$. The crucial observation behind this approach to Theorem \ref{thm:main} is that for each $n$ the contribution to each $x$ comes from a single tree $T^*\in \F_n^*$. Indeed, note that the contributing bitiles $P\in\P_n$ satisfy $(x,N(x))\in I_P\times \omega_{P_l}$. Since all these bitiles $P$ contain $(x,N(x))$, they will be pairwise comparable under $\le$. Call $P_x$ the unique maximal bitile among them. Let $T^*_x$ be one of the trees in $\F_n^*$ containing $P_x$. It follows that all the contributing bitiles belong to $T_x^*$. Thus, if $x\notin F$
$$|C_{\P_n}f(x)|=|C_{T_x^*}f(x)|\le \lambda 2^{-n/2}.$$
It further follows by linearity that
$$|C_{\P}f(x)|\le \sum_{2^{-n}\lesssim 1}|C_{\P_n}f(x)|\lesssim \lambda.$$

The case $0<\lambda\le 1$ is very similar but we need an additional exceptional set
$$G:=\{x:M(f)(x)>\lambda^p\}.$$
Since $|G|\lesssim \frac1{\lambda^p}|E|$ and since $W_{P_l}$ is supported on $I_P$, it is enough to prove \eqref{e15} with $\P$ restricted to those bitiles such that $P\not\subset G$. Another application of Proposition \ref{djhfdftyert} shows that $\size_{f}(\P)\lesssim \lambda^p$. Let $\P_n, \F_n$ with $2^{-n}\lesssim \lambda^p$, be the collections from Proposition \ref{2erbyu76i87o} corresponding to our new $\P$. Define
$$F=\bigcup_{2^{-n}\lesssim \lambda^p}\bigcup_{T^*\in \F_n^*}\{x: |C_{T^*}f(x)|>\lambda^{1/2} 2^{-\frac{n}{2p}}\}.$$
As before \eqref{e17} and \eqref{e18} imply
$$|\{x: |C_{T^*}f(x)|>\lambda^{1/2} 2^{-\frac{n}{2p}}\}|\lesssim |I_T|(\frac{2^{-n}}{\lambda^{1/2} 2^{-\frac{n}{2p}}})^{p/p-1}.$$
We immediately get that
$|F|\lesssim |E|\le \frac1{\lambda^p}|E|.$ Note also that if $x\notin F$
$$|C_{\P}f(x)|\le \sum_{2^{-n}\lesssim \lambda^p}|C_{\P_n}f(x)|\le \sum_{2^{-n}\lesssim \lambda^p}\lambda^{1/2} 2^{-\frac{n}{2p}}\lesssim \lambda.$$
\end{proof}

\section{Mass and Fefferman's argument}
\label{FEFE}
The argument in this section is a translation to the Walsh case of Fefferman's proof \cite{Fe}. The key tool used in this proof is  {\em mass}.

\begin{definition}
The mass of a convex collection $\P$ of bitiles is defined as
$$\mass(\P)=\sup_{P\in \P}\frac{|E(P)|}{|I_P|}$$
where $E(P)=I_P\cap N^{-1}(\omega_P)$.
\end{definition}
In some sense the mass of a single bitile $P$ measures (or better said, it puts an upper bound on- since $\omega_{P_l}\subset \omega_P$)  how much $P$ contributes to $C_Pf(x)$. Indeed, it suffices to note that for $1\le p\le\infty$ we have
$\|C_Pf\|_p\le \mass(P)^{1/p}\|f\|_p.$ We will next extend this inequality to the case of trees and then to a special type of forests.

We have the following analogue of Lemma \ref{sizelemma}. We will restrict attention to the bitiles spatially supported in $[0,1]$
$$\P_{[0,1]}:=\{P\in\P_{all}: I_P\subset [0,1]\}.$$

\begin{lemma}
 \label{masslemma}
Let $\P$ be a finite convex collection of bitiles in $\P_{[0,1]}$. Then
$
\P= \P_{hi} \cup \P_{{lo}}
,$
such that \begin {itemize}
\item[$\cdot$]both $\P_{{lo}}$ and $ \P_{hi} $ are convex
\item[$\cdot$]$\mass(\P_{{lo}} )\leq \frac12\mass(\P)$:
\item[$\cdot$] $\P_{hi}$ is a convex forest with trees $T\in\F$ satisfying
$$
\sum_{T\in\F}  |I_T|\lesssim \mass(\P)^{-1}.
$$
\end{itemize}
\end{lemma}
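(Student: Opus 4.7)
The plan is to mimic the recursive selection procedure of Lemma \ref{sizelemma}, swapping the role of $\|\Pi_t f\|_2|I_t|^{-1/2}$ with the density $|E(t)|/|I_t|$. Specifically, I would initialize $\P_{stock} := \P$ and $\F := \emptyset$, and at each step select a bitile $t \in \P_{stock}$ that is maximal with respect to $\le$ among those satisfying $|E(t)|/|I_t| > \mass(\P)/2$. I would then set $T(t) = \{P \in \P_{stock} : P \le t\}$, add it to $\F$, and remove it from $\P_{stock}$. The convexity of $\P_{stock}$ is preserved at every stage (so $T(t)$ and $\P_{stock}\setminus T(t)$ are both convex), and by design $\mass(\P_{lo}) \le \frac12\mass(\P)$ once the algorithm halts. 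Convexity of $\P_{hi} = \bigcup_{T \in \F} T$ follows exactly as in the size case: if $P \le P' \le P''$ with $P,P'' \in \P_{hi}$, then at the moment the tree containing $P''$ was selected, $P'$ was either already placed in an earlier tree or still present in $\P_{stock}$ and thus absorbed (since $P' \le t''$).

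The main step is the packing estimate $\sum_{T \in \F}|I_T|\lesssim \mass(\P)^{-1}$. The selected top bitiles $t$ are pairwise incomparable under $\le$: indeed, if $t$ is selected at step $i$ and $t'$ at step $j>i$, then $t'\not\le t$ (else $t'\in T(t)$), and $t\not\le t'$ (else by maximality of $t$ in $\P_{stock}$ at step $i$, the bitile $t'$, which also satisfies the fixed mass condition and lay in $\P_{stock}$ at that step, would have been picked ahead of $t$). The key observation will then be that for two incomparable dyadic bitiles $t,t'$ one necessarily has $I_t\cap I_{t'}=\emptyset$ or $\omega_t\cap \omega_{t'}=\emptyset$ (if both pairs met, comparability of dyadic intervals of reciprocal sizes would force $t\le t'$ or $t'\le t$). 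In the first case $E(t)\subset I_t$ and $E(t')\subset I_{t'}$ are disjoint; in the second, $N^{-1}(\omega_t)\cap N^{-1}(\omega_{t'})=\emptyset$ forces the same conclusion.

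Since the sets $E(t)$ are pairwise disjoint subsets of $[0,1]$ (using $\P\subset \P_{[0,1]}$), we get $\sum_t |E(t)|\le 1$. Combining with the selection threshold $|E(t)| > \frac{\mass(\P)}{2}|I_t|$ yields
$$
\sum_{T\in\F}|I_T|=\sum_t |I_t| \le \frac{2}{\mass(\P)}\sum_t|E(t)| \le \frac{2}{\mass(\P)},
$$
as required. I expect the only nontrivial obstacle to be the disjointness of the sets $E(t)$; once the incomparability/dyadic dichotomy argument is laid out, the rest is a direct transcription of the size proof, with the orthogonality-based $\ell^2$ bound replaced by this simpler $\ell^1$ packing argument inside $[0,1]$.
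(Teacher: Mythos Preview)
Your proposal is correct and follows essentially the same route as the paper: the recursive selection of maximal bitiles $t$ with $\mass(t)>\tfrac12\mass(\P)$, the convexity bookkeeping, and the packing estimate via disjointness of the sets $E(t)$ all match. The paper simply records that ``by maximality the selected bitiles $t$ are pairwise disjoint, and thus the sets $E(t)$ will also be pairwise disjoint''; your incomparability/dyadic dichotomy argument is exactly the unpacking of that sentence.
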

\begin{proof} This is another recursive procedure. Set $\P_{{stock}}:=\P$. Select a maximal  bitile  $t \in \P_{{stock}}$ such that
$$
\mass(t) > \frac{\mass(\P)}{2}
$$
Define
$$
T(t) = \{P \in \P_{{stock}}: P \le t\}.
 $$
and note that since $\P_{{stock}}$ is convex, both $\P_{{stock}}\setminus T(t)$ and the tree $T(t)$ will be  convex. Add $T(t)$ to the family $\F$. Reset the new value $\P_{{stock}}:=\P_{{stock}}\setminus T(t)$, and restart the procedure.

The algorithm is over when there is no $t$ to be selected. Then define $ \P_{hi} $ to consist of the union of  all bitiles in all the trees from $\F$, and let $\P_{{lo}}=\P\setminus\P_{hi}$.

  The first two needed properties as well as the convexity of $\F$ are quite immediate.
 By maximality the selected bitiles $t$ are pairwise disjoint, and thus the sets $E(t)$ will also be pairwise disjoint. Since  $E(t)\subset [0,1]$, it follows that
\begin{equation}
\label{iod fhn rygui}
\sum_{T \in \F}|I_T|=\sum_{t} |I_t| \leq 2 \mass(\P)^{-1} \sum_{t} |E(t)|  \leq 2\mass(\P)^{-1}.
\end{equation}
\end{proof}

Note that the mass of any collection is trivially bounded by 1. We can iterate the lemma to obtain the following analogue of Proposition \ref{2erbyu76i87o}

\begin{proposition}[Mass decomposition]
\label{massdecomp}Let $\P\subset \P_{[0,1]}$ be a finite convex collection of bitiles. Then
$$
\P= \bigcup_{n\ge 0}\P_{n} \cup \P_{null}
,$$
such that \begin {itemize}
\item[$\cdot$]$\mass (\P_{n} )\leq 2^{-n}$:
\item[$\cdot$] $\P_{n}$ is a convex forest with trees $T\in\F_n$ satisfying
$$
\sum_{T\in\F_n}  |I_T|\lesssim 2^{n},
$$

\item [$\cdot$]$C_Pf\equiv 0$ for each $P\in\P_{null}$ and each $f$
\end{itemize}
\end{proposition}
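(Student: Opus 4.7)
The plan is simply to iterate Lemma \ref{masslemma} and then re-index the resulting pieces by dyadic mass scale. Setting $\P^{(0)} := \P$, which trivially satisfies $\mass(\P^{(0)}) \le 1$, at each step $k \ge 0$ with $\mass(\P^{(k)}) > 0$ I apply the lemma to obtain a convex splitting $\P^{(k)} = \P^{(k)}_{hi} \cup \P^{(k)}_{lo}$ with $\mass(\P^{(k)}_{lo}) \le \mass(\P^{(k)})/2$, and set $\P^{(k+1)} := \P^{(k)}_{lo}$. Since $\P$ is finite, after finitely many steps, say $K$, the procedure terminates with $\mass(\P^{(K)}) = 0$; declare $\P_{null} := \P^{(K)}$. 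By definition of mass, every $P \in \P_{null}$ satisfies $|E(P)| = 0$, and since $\{x : C_P f(x) \ne 0\} \subset I_P \cap N^{-1}(\omega_{P_l}) \subset E(P)$, we get $C_P f \equiv 0$ almost everywhere, as required.

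For the dyadic bookkeeping, for each $0 \le k < K$ let $n(k)$ be the unique nonnegative integer with $\mass(\P^{(k)}) \in (2^{-n(k)-1}, 2^{-n(k)}]$. The halving property forces $\mass(\P^{(k+1)}) \le \mass(\P^{(k)})/2 \le 2^{-n(k)-1}$, hence $n(k+1) \ge n(k) + 1$; in particular the map $k \mapsto n(k)$ is strictly increasing and thus injective. Set $\P_n := \P^{(k)}_{hi}$ if $n = n(k)$ for some (necessarily unique) $k$, and $\P_n := \emptyset$ otherwise.

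All three bullets then follow essentially for free. The convex forest structure of each $\P_n$ is inherited directly from the corresponding conclusion of Lemma \ref{masslemma}. The mass bound is $\mass(\P_n) \le \mass(\P^{(k)}) \le 2^{-n(k)} = 2^{-n}$. For the length bound, Lemma \ref{masslemma} gives $\sum_{T \in \F_n} |I_T| \lesssim \mass(\P^{(k)})^{-1} < 2^{n(k)+1} = 2 \cdot 2^n$. The only subtlety of the argument is choosing the indexing $n(k)$ so that the single-step bound $\mass(\P^{(k)})^{-1}$ from the lemma matches the target $2^n$; everything else is just propagating convexity through the iteration, and there is no genuine analytic obstacle.
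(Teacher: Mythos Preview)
Your proposal is correct and is exactly the iteration the paper has in mind: the paper's own proof is the single sentence ``We can iterate the lemma to obtain'' preceding the statement, and you have simply spelled out that iteration together with the natural dyadic re-indexing. The only detail you leave implicit is why the iteration halts in finitely many steps, but this is immediate since the finitely many bitile masses $|E(P)|/|I_P|$ have a smallest positive value, below which the remaining collection must have mass zero.
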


It turns out that we have not only $L^1$ but also dyadic $BMO_{\Delta}$ control for $N_{\F_n}$. Recall that
$$\|f\|_{BMO_{\Delta}}=\sup_{I\text{ dyadic }}\frac1{|I|}\int_I\left |f(x)-\frac{1}{|I|}\int_If \right|dx$$
\begin{proposition}
We have
$$\|N_{\F_n}\|_{BMO_{\Delta}}\lesssim 2^n.$$
In particular, for each $I\in\D_+$ and $\lambda>0$
$$|\{x:\sum_{T\in \F_n:I_T\subset I}1_{I_T}(x)\ge C\lambda 2^{n}\}|\le e^{-\lambda}|I|,$$
where $C$ is large enough.
\end{proposition}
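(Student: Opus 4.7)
The two statements are logically linked: the first is a BMO estimate, and the second follows by applying the dyadic John--Nirenberg inequality. So I would establish the BMO bound first and then derive the exponential distribution bound as a corollary.

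\textbf{Step 1: a localized $L^1$ bound.} The crucial input is a version of Lemma \ref{masslemma} localized to a dyadic interval $I$: for each $I\in\D_+$ one has
$$\sum_{T\in \F_n:\, I_T\subseteq I}|I_T|\lesssim 2^n|I|.$$
To see this, revisit the selection algorithm producing $\F_n$. The tops $t$ whose tree $T(t)$ ends up in $\F_n$ are pairwise disjoint as bitiles, and each of them was selected with $|E(t)|/|I_t|\gtrsim 2^{-n}$. Since distinct selected $t$'s are disjoint as bitiles, the sets $E(t)$ are pairwise disjoint, and for any such $t$ with $I_t\subseteq I$ we have $E(t)\subseteq I$. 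Summing,
$$\sum_{t:\,I_t\subseteq I}|I_t|\lesssim 2^n\sum_{t:\,I_t\subseteq I}|E(t)|\le 2^n|I|.$$

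\textbf{Step 2: the BMO estimate.} Fix $I\in\D_+$. Since every top interval $I_T$ is dyadic, it is either disjoint from $I$, strictly contains $I$, or is contained in $I$. Writing
$$N_{\F_n}(x)=\underbrace{\#\{T\in\F_n:\,I\subsetneq I_T\}}_{=: c_I\text{, constant on }I}+\sum_{T\in\F_n:\,I_T\subseteq I}1_{I_T}(x)\qquad(x\in I),$$
the oscillation of $N_{\F_n}$ on $I$ equals the oscillation of $M_I:=\sum_{T:\,I_T\subseteq I}1_{I_T}$. Bounding mean oscillation by $2$ times the mean absolute value gives
$$\frac{1}{|I|}\int_I |N_{\F_n}-\mathrm{avg}_I N_{\F_n}|\le \frac{2}{|I|}\int_I M_I\le \frac{2}{|I|}\sum_{T:\,I_T\subseteq I}|I_T|\lesssim 2^n,$$
using Step 1. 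Taking the supremum over $I$ yields $\|N_{\F_n}\|_{BMO_\Delta}\lesssim 2^n$.

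\textbf{Step 3: exponential distribution via John--Nirenberg.} The dyadic John--Nirenberg inequality states that for any $g\in BMO_\Delta$ and any $I\in\D_+$,
$$|\{x\in I:\,|g(x)-\mathrm{avg}_I g|>\mu\}|\le c_1 e^{-c_2\mu/\|g\|_{BMO_\Delta}}|I|.$$
Applied to $g=N_{\F_n}$ with $\mu=\tilde C\lambda 2^n$ and $\tilde C$ large enough so that $c_2\tilde C\ge 1$, this gives $e^{-\lambda}|I|$ on the right. Since the average $\mathrm{avg}_I N_{\F_n}$ differs from the restriction of $M_I$'s average by the additive constant $c_I$, and $\mathrm{avg}_I M_I\lesssim 2^n$ by Step 1, up to enlarging the constant $C$ we may replace "$|N_{\F_n}-\mathrm{avg}_I|>\tilde C\lambda 2^n$" by "$\sum_{T:\,I_T\subseteq I}1_{I_T}\ge C\lambda 2^n$", which is the stated conclusion.

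\textbf{Expected main obstacle.} The only genuinely non-routine point is Step 1: extracting the localized mass estimate from the selection procedure. One must verify that the "disjointness of the $E(t)$" argument survives restriction to a sub-collection of tops, which it does because the bound $|E(t)|\gtrsim 2^n|I_t|$ is a per-bitile property, and disjointness of bitiles forces disjointness of the sets $E(t)$. Everything else is a clean application of standard dyadic machinery.
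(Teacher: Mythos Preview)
Your proof is correct and follows exactly the paper's approach: reduce the BMO bound to the Carleson-type estimate $\sup_I |I|^{-1}\sum_{T:\,I_T\subset I}|I_T|\lesssim 2^n$ (your Step~1 makes explicit what the paper asserts as ``one can easily check''), and then invoke John--Nirenberg. The only additional content you provide is spelling out the localized mass argument, which the paper leaves implicit.
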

\begin{proof}
One can easily check that
$$\|N_{\F_n}\|_{BMO_\Delta}\lesssim \sup_{I\in\D_+}\frac{\sum_{T\in \F_n:I_T\subset I}|I_T|}{|I|}.$$
This combined with \eqref{iod fhn rygui} will imply our first inequality.

The second one is just a consequence of John-Nirenberg's inequality.
\end{proof}

Given a convex tree $T$ with top data $(\xi_T,I_T)$ , define $\J_T$ to be the collection of all maximal dyadic intervals $J\subset I_T$ such that $J$ contains no $I_P$ with $P\in T$. For $J\in\J_T$ define
$$G_J=J\cap (\bigcup_{P\in T:J\subset I_P} E(P)).$$
We first observe that the intervals of $\J_T$ form a partition of $I_T$. Note also that if $J$ intersects some $I_P$ with $P\in T$, then  the convexity of $T$ forces the dyadic parent of each $J\in \J_T$ to equal $I_{P(J)}$, for some $P(J)\in T$. Thus
 $G_J\subset E(P(J))$, which implies  the
following crucial Carleson measure type estimate
\begin{equation}
\label{Carmeasmass}
|G_J|\le 2\mass(T)|J|,
\end{equation}for each $J\in\J_T$.
We also need to observe that $C_Tf$ is supported on $\bigcup_{J\in\J_T}G_J$.

\begin{proposition}[Tree estimate]
\label{treeestimate}
Let $T$ be a convex tree. Then for each $1<p<\infty$
$$\|C_Tf\|_{p}\lesssim (\mass(T))^{1/p}\|f\|_p.$$
\end{proposition}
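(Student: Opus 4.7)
The plan is to exploit that $\supp C_Tf\subseteq G_T=\bigsqcup_{J\in\J_T}G_J$, together with the Carleson-type measure estimate \eqref{Carmeasmass} and a pointwise bound $|C_Tf(x)|\lesssim \inf_{y\in J}Mf(y)$ for $x\in G_J$. Splitting $T=T_o\cup T_l$ (both convex, both sharing the same partition $\J_T$ up to refinement), it suffices to prove the bound for overlapping and lacunary trees separately.

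For the overlapping tree, the proof of Lemma~\ref{furfy58t78t585igutgutih} yields $|C_{T_o}f(x)|\lesssim |I_P|^{-1}\int_{I_P}|f|$ for the unique contributing bitile $P$ at $x$. Since $x\in J$ and $J\in\J_T$ contains no $I_{P'}$ with $P'\in T$, maximality forces $\hat J\subseteq I_P$, and hence the average above is dominated by $\inf_{y\in J}Mf(y)$. For the lacunary tree, I would follow the convolution representation in the proof of Lemma~\ref{furfy58t78t585igutgutih} and split $f=f\,1_{\hat J}+f\,1_{\hat J^c}$; using that only bitiles $P\in T_l$ with $I_P\supseteq \hat J$ contribute for $x\in G_J$, the near part produces a geometric sum in $|I_P|^{-1}$ bounded by $\inf_{y\in J}Mf(y)$, while the far part is handled by the orthogonality of Walsh wave packets on $\hat J^c$ encoded in $\O_{T_l}$.

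Once the pointwise bound is in hand, the proposition follows from
\begin{align*}
\|C_Tf\|_p^p&=\sum_{J\in\J_T}\int_{G_J}|C_Tf|^p
\lesssim \sum_{J\in\J_T}|G_J|\,\inf_{y\in J}(Mf(y))^p\\
&\lesssim \mass(T)\sum_{J\in\J_T}|J|\,\inf_{y\in J}(Mf(y))^p\le \mass(T)\sum_{J\in\J_T}\int_J (Mf)^p\\
&\le \mass(T)\,\|Mf\|_p^p\lesssim \mass(T)\,\|f\|_p^p,
\end{align*}
where the partition $I_T=\bigsqcup_{J\in\J_T}J$ and the Hardy--Littlewood maximal inequality on $L^p$ (for $1<p<\infty$) are used in the last two steps. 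Taking $p$-th roots delivers the claim.

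The main obstacle is the pointwise estimate in the lacunary case: a naive summation across scales would produce a logarithmic loss from the chain of dyadic ancestors of $\hat J$ inside $I_T$, so the phase-plane cancellation from Lemma~\ref{furfy58t78t585igutgutih} (which converts the truncated sum into a convolution $|\omega|\int_{x\oplus[0,|\omega|^{-1}]}\O_{T_l}f$) must be combined with the maximality-forced restriction $I_P\supseteq\hat J$ to control the contribution without any logarithmic factor.
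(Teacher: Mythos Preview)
Your overall architecture is exactly the paper's: restrict to $\bigcup_J G_J$, use the Carleson estimate \eqref{Carmeasmass}, obtain a pointwise bound on each $J$, and sum. The overlapping case is handled correctly and matches the paper verbatim.

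The lacunary case, however, is where your write-up diverges. You aim for the pointwise bound $|C_{T_l}f(x)|\lesssim \inf_{y\in J}Mf(y)$ and propose a near/far split $f=f1_{\hat J}+f1_{\hat J^c}$. The near part does give a geometric sum as you say, but the far-part justification (``orthogonality of Walsh wave packets on $\hat J^c$ encoded in $\O_{T_l}$'') is not a proof: it does not explain why the partial sums over the chain $I_P\supseteq\hat J$ applied to $f1_{\hat J^c}$ stay bounded by $\inf_{y\in J}Mf(y)$ rather than by, say, $\inf_{y\in J}M(\O_{T_l}f)(y)$. Indeed, the convolution identity you yourself quote produces an average of $\O_{T_l}f$, not of $f$.

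The paper sidesteps this entirely: no splitting of $f$ is needed. One uses that for $x\in J$ every contributing bitile has $|I_P|\ge 2|J|$, hence all $\omega_{P_u}$ lie in an interval of length $|J|^{-1}$ about $\xi_T$; the convolution representation from Lemma~\ref{furfy58t78t585igutgutih} then gives directly
\[
|C_{T_l}f(x)|\le \frac{1}{|J|}\int_{x\oplus[0,|J|]}|\O_{T_l}f|\lesssim \inf_{y\in J}M(\O_{T_l}f)(y).
\]
Your final $L^p$ chain then runs with $M(\O_{T_l}f)$ in place of $Mf$, and one closes with $\|M(\O_{T_l}f)\|_p\lesssim\|\O_{T_l}f\|_p\lesssim\|f\|_p$ via Lemma~\ref{lemasingtree1}. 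So the fix is simply to change the target of the lacunary pointwise bound from $Mf$ to $M(\O_{T_l}f)$ and drop the near/far decomposition; everything else in your argument stands.
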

\begin{proof}Assume first that $T$ is overlapping.
Note that for each $J\in\J_T$ and $x\in J$, if $C_Tf(x)$ is nonzero   then
$|C_Tf(x)|=\frac{1}{|I_P|^{1/2}}|\langle f, W_{P_u}\rangle|$
for some $P\in T$ with $J\subset I_P$. Thus
$|C_Tf(x)|\le \inf_{y\in J}M(f)(y)$. We conclude that
$$\int|C_Tf(x)|^pdx=\sum_{J\in\J_T}\int_{G_J}|C_Tf(x)|^pdx\lesssim \sum_{J\in\J_T}|G_J|\inf_{y\in J}[M(f)(y)]^p$$$$\le 2\mass(T)\sum_{J\in\J_T}\int_{J}|Mf(x)|^pdx=2\mass(T)\|Mf\|_{L^p(I_T)}^p\lesssim \mass(T)\|f\|_p^p$$

On the other hand, if $T$ is lacunary we reason like in the proof of Lemma \ref{furfy58t78t585igutgutih} to write
$$C_Tf(x)=\F_W^{-1}(1_{\omega}\F_W(O_Tf))(x),$$
where the interval $\omega$ is any interval which contains all $\omega_{P_u}$ with $x\in I_P$. Recall however that $|I_P|\ge 2|J|$ for all such $P$ and hence, since $\xi_T\in\omega_{P_l}$, all $\omega_{P_u}$ will be contained in the interval $[\xi_T,\xi_T+|J|^{-1}]$. Thus
$$|C_Tf(x)|\le \frac1{|J|}\int_{x\oplus[0,|J|]}|O_Tf|(y)dy\lesssim \inf_{y\in J}M(O_Tf)(y).$$
A repetition of the argument from the overlapping case combined with Lemma \ref{lemasingtree1} ends the proof.
\end{proof}
The proof of this proposition shows that for each $J\in\J_T$
\begin{equation}
\label{jvgirtg782r]o3olfkjtiohj}
\|C_Tf\|_{L^\infty(J)}\le \inf_{x\in J}M(V_Tf)(x)
\end{equation}
where $V_Tf=\Pi_Tf$ if $T$ is overlapping and $V_Tf=O_T(\Pi_Tf)$ is $T$ is lacunary.

\begin{definition}We call $\F$ a Fefferman forest if two bitiles from any two distinct trees in $\F$ are pairwise disjoint.
\end{definition}
It turns out that outside a small exceptional set, each forest $\F$ can be written as a disjoint union of a small number of Fefferman forests.

\begin{lemma}[The Fefferman trick]\label{Fefftrick}
Assume  $\P$ is a convex set in $\P_{[0,1]}$, which is organized as a forest $\F$ satisfying
$$\|N_{\F}\|_{BMO_\Delta}\lesssim 2^{n}$$
and
$$\|N_\F\|_{L^1}\lesssim 2^{n}.$$
Then, for each $K>1$ there is an exceptional set $F$ with $|F|\le e^{-K}$ such that
\begin{itemize}
\item \begin{equation}\label{e20}\|\sum_{T\in\F:I_T\not\subset F}1_{I_T}\|_{\infty}\lesssim  K2^n\end{equation}
\item the bitiles in $\P_F:=\bigcup_{T\in\F:I_T\not\subset F}\{P:P\in T\}$ can be partitioned into  $O(\log (K2^n))$ Fefferman forests
\end{itemize}
\end{lemma}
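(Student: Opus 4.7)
Set $M := \|N_\F\|_{BMO_\Delta}$ and $L := C_0 K M$ with $C_0$ a large absolute constant. Define the exceptional set
$$F := \{x \in [0,1] : N_\F(x) > L\};$$
the John--Nirenberg type estimate of the proposition immediately preceding the lemma, applied with $I = [0,1]$, yields $|F| \le e^{-K}$.

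The pointwise bound \eqref{e20} follows from a standard dyadic argument. For any $x \in [0,1]$, the trees $T \in \F$ with $x \in I_T$ and $I_T \not\subset F$ have nested dyadic tops (they all contain $x$); the smallest member of this chain meets $F^c$ at some point $y$, and since $y$ lies in every tree of the chain, the chain has at most $N_\F(y) \le L$ members. This handles $x$ both inside and outside $F$.

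For the partition step, let $\mathcal{G} := \{T \in \F : I_T \not\subset F\}$. Two trees $T, T' \in \mathcal{G}$ with $I_T \cap I_{T'} = \emptyset$ satisfy the Fefferman disjointness condition automatically, since every bitile of $T$ is spatially contained in $I_T$ and every bitile of $T'$ in $I_{T'}$; so all conflicts happen between pairs of trees with nested tops. The plan is to color each $T \in \mathcal{G}$ by the binary digits of its nesting depth $d(T) := \#\{T' \in \mathcal{G} : I_T \subsetneq I_{T'}\}$, which by \eqref{e20} is at most $L$; this uses $O(\log L)$ color classes. Within each color class the bitiles are then organized into convex trees (first by retaining the $\F$-structure and subdividing as needed), and one verifies that this yields a Fefferman forest.

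The main obstacle is this last verification. Any comparability $P \le P'$ between bitiles from same-color distinct trees $T, T'$ (necessarily with nested tops $I_T \subsetneq I_{T'}$) forces both $\xi_T$ and $\xi_{T'}$ to lie in one common dyadic frequency interval $\omega_P$. The maximality step in the selection algorithms of Lemmas \ref{sizelemma} and \ref{masslemma} ensures the top bitiles $t_T, t_{T'}$ are pairwise disjoint in phase space, hence in the nested case $\omega_{t_T}$ and $\omega_{t_{T'}}$ are disjoint dyadic frequency intervals. The binary-digit coloring should force a large enough depth-gap between same-color nested trees to preclude any common ambient $\omega_P$ containing both $\omega_{t_T}$ and $\omega_{t_{T'}}$; turning this geometric bookkeeping into a rigorous argument is the heart of the Fefferman trick and the most delicate step of the proof.
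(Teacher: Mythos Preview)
Your construction of $F$ and the argument for \eqref{e20} are correct and match the paper. The partition step, however, does not work as you have set it up.

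The core issue is that you color \emph{trees} by (the dyadic magnitude of) their spatial nesting depth $d(T)$, whereas the paper colors individual \emph{bitiles} by the count $n(P):=\#\{T\in\mathcal G: P\le P_T\}$, placing $P$ in layer $l$ when $2^l\le n(P)<2^{l+1}$. Your scheme can place into one color class a set of bitiles that admits \emph{no} Fefferman forest structure whatsoever. Concretely, take $\P=\{A,B,C\}$ with
\[
A=[0,1]\times[0,2],\qquad B=[0,\tfrac12]\times[0,4],\qquad C=[0,1]\times[2,4].
\]
This $\P$ is convex, and one legitimate forest is $T_1=\{A,B\}$ (top frequency near $0$), $T_2=\{C\}$ (top frequency near $3$). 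Since $I_{T_1}=I_{T_2}=[0,1]$, both trees have $d=0$ and land in the same color class. But $B\le A$ and $B\le C$, while $A,C$ are incomparable and their frequency intervals meet only at the forbidden dyadic point $2$; hence $\{A,B,C\}$ cannot be partitioned into convex trees with bitiles from distinct trees pairwise incomparable. Your appeal to the disjointness of top bitiles from Lemmas~\ref{sizelemma} and~\ref{masslemma} does not help: it is not a hypothesis of the present lemma, and in the example the top bitiles $A$ and $C$ are already disjoint.

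The paper's bitile-level count separates $B$ (with $n(B)=2$) from $A,C$ (each with $n=1$), resolving the example. In general, within a layer $\P_F^l$ one takes the maximal bitiles $t$ and sets $T(t)=\{P\in\P_F^l:P\le t\}$. If $P\in T(t)$, $P'\in T(t')$, $t\ne t'$ and $P\le P'$, then $P\le t$ and $P\le t'$; maximality makes $t,t'$ incomparable, so any $P_T$ with $t\le P_T$ and $t'\le P_T$ would force $t,t'$ comparable. Hence $\{T:t\le P_T\}$ and $\{T:t'\le P_T\}$ are disjoint subsets of $\{T:P\le P_T\}$, giving $2^l+2^l\le n(P)<2^{l+1}$, a contradiction. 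The essential point you are missing is that a single original tree must in general be split across several layers, and only this bitile-level granularity makes the pigeonhole go through.
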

\begin{proof}Define $F=\{x: \sum_{T\in \F}1_{I_T}(x)>CK2^n\}$, for large enough $C$. Note that $|F|\le e^{-K}$ by John-Nirenberg's inequality. Call $\F_F:=\{T\in\F:I_T\not\subset F\}$. Then \eqref{e20} is immediate. Let $P_T$ be any top bitile for $T$. Define for each $l\in \N$ with $1\le 2^l\le CK2^n$
$$\P_F^l:=\{P\in\P_F:2^l\le \#\{T\in \F_F:P\le P_T\}<2^{l+1}\}.$$
It remains to prove that $\P_F^l$ is a Fefferman forest. Note that $\P_F^l$ is convex, in part because $\P$ is convex. For each maximal element $t\in \P_F^l$, let $T(t)=\{P\in \P_F^l:P\le t\}$. Obviously each tree $T(t)$ is convex. Assume for contradiction $P\le P'$ for some $P\in T(t), P'\in T(t')$ with $t\not=t'$. Then $P\le t'$, in addition to $P\le t$. Thus $I_t$ intersects $I_{t'}$. But then,  since $t$ and $t'$ are pairwise incomparable under $\le$, the sets $\{T\in \F_F:t\le P_T\}$ and $\{T\in \F_F:t'\le P_T\}$ must be pairwise disjoint. Note that $$\{T\in \F_F:t\le P_T\}\cup\{T\in \F_F:t'\le P_T\}\subset\{T\in \F_F:P\le P_T\},$$
and this will force the contradiction $2^l+2^l<2^{l+1}$.
\end{proof}

We will repeatedly use the fact that if $T,T'$ are trees in a Fefferman forest then $C_Tf$ and $C_{T'}f$ are disjointly  supported while $\Pi_Tf$ and $\Pi_{T'}f$ are orthogonal.

\begin{proposition}[Forest estimate]
\label{forestesftttttimate}
Let $\F$ be a Fefferman forest.
Then for each $1<p<\infty$ there exists $\delta(p)<\frac1p$ such that
$$\|C_\F f\|_{p}\lesssim (\mass(\F))^{1/p}(\|N_\F\|_{\infty})^{\delta(p)}\|f\|_p.$$
When $p\ge 2$ we can take $\delta(p)=0$.
\end{proposition}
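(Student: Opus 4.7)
My plan is to reduce the forest estimate to an $L^p$ bound on the tree square function $Sf:=(\sum_{T\in\F}|\Pi_Tf|^2)^{1/2}$, and to handle the ranges $p\ge 2$ and $1<p<2$ with slightly different elementary inequalities.

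The Fefferman property makes $\{C_Tf\}_{T\in\F}$ disjointly supported, while the localized form of Proposition~\ref{treeestimate} (obtained by applying the proposition to $\Pi_Tf$, since $\langle f,W_{P_u}\rangle=\langle \Pi_Tf,W_{P_u}\rangle$ for $P\in T$ by convexity) gives $\|C_Tf\|_p\lesssim\mass(T)^{1/p}\|\Pi_Tf\|_p$. Combining these,
\begin{align*}
\|C_\F f\|_p^p=\sum_{T\in\F}\|C_Tf\|_p^p\lesssim\mass(\F)\sum_{T\in\F}\|\Pi_Tf\|_p^p.
\end{align*}
For $p\ge 2$, the inclusion $\ell^2\hookrightarrow\ell^p$ gives the pointwise bound $\sum_T|\Pi_Tf|^p\le(Sf)^p$, leading to $\|C_\F f\|_p\lesssim\mass(\F)^{1/p}\|Sf\|_p$, i.e.\ $\delta(p)=0$ as required. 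For $1<p<2$, since $\Pi_Tf$ is supported in $I_T$, H\"older's inequality with exponents $2/p$ and $2/(2-p)$ yields the pointwise bound $\sum_T|\Pi_Tf|^p\le N_\F^{(2-p)/2}(Sf)^p$, hence
\begin{align*}
\|C_\F f\|_p\lesssim\mass(\F)^{1/p}\|N_\F\|_\infty^{1/p-1/2}\|Sf\|_p,
\end{align*}
so the choice $\delta(p):=1/p-1/2<1/p$ works.

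The main obstacle is the square function estimate $\|Sf\|_p\lesssim\|f\|_p$ for $1<p<\infty$, on which both cases hinge. At $p=2$ it is immediate from the pairwise orthogonality of the ranges of $\Pi_T$, granted by \eqref{e11} and the Fefferman property. For general $p$ I would view $f\mapsto(\Pi_Tf)_T$ as an $\ell^2$-valued operator: Proposition~\ref{hfuyhrufywiei3u884u8} supplies the pointwise bound $|\Pi_Tf|\lesssim Mf\cdot 1_{I_T}$, and a vector-valued Calder\'on--Zygmund argument (weak-type $(1,1)\to L^{1,\infty}(\ell^2)$ via a standard CZ decomposition of $f$, interpolated against the trivial $L^2\to L^2(\ell^2)$ bound from orthogonality) should yield $L^p(\ell^2)$ for $1<p\le 2$, while $p\ge 2$ is recovered by duality applied to the adjoint $(g_T)_T\mapsto\sum_T\Pi_Tg_T$. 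The most delicate step is checking the vector-valued CZ kernel estimates for the tree projections $\Pi_T$, which one expects to reduce to a scale-by-scale analysis of the Haar-like wave-packet kernels and the phase-plane disjointness afforded by the Fefferman property.
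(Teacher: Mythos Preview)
Your opening reduction --- disjoint supports of the $C_Tf$, the localized tree estimate $\|C_Tf\|_p\lesssim\mass(T)^{1/p}\|\Pi_Tf\|_p$, and hence $\|C_\F f\|_p^p\lesssim\mass(\F)\sum_T\|\Pi_Tf\|_p^p$ --- is correct and matches the paper. The divergence is in how to control $\sum_T\|\Pi_Tf\|_p^p$.

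The paper never invokes the square function $Sf$. For $p\ge 2$ it interpolates the vector-valued map $Vf=(\Pi_Tf)_{T\in\F}$ between $L^2\to\ell^2(L^2)$ (orthogonality of the $\Pi_Tf$, from the Fefferman property and \eqref{e11}) and $L^\infty\to\ell^\infty(L^\infty)$ (Proposition~\ref{hfuyhrufywiei3u884u8} applied tree by tree), obtaining $\sum_T\|\Pi_Tf\|_p^p\lesssim\|f\|_p^p$ directly. For $p<2$ it splits $\F$ into $\|N_\F\|_\infty$ layers with pairwise disjoint top intervals, gets the crude bound $\|C_\F f\|_p\lesssim\mass(\F)^{1/p}\|N_\F\|_\infty^{1/p}\|f\|_p$, and then interpolates the linear operator $C_\F$ between this and its $L^2$ bound to force the exponent below $1/p$. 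No Calder\'on--Zygmund theory enters.

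Your route through $\|Sf\|_p\lesssim\|f\|_p$ has a real gap. The ``standard CZ decomposition'' you propose does not work here, for the same reason Carleson's theorem is hard: the projections $\Pi_T$ are modulated to different frequencies $\xi_T$, and the mean-zero cancellation of a bad atom $b_Q$ is invisible to the relevant wave packets. Concretely, if $x\notin Q$ and $P\in T$ is the minimal bitile with $x\in I_P$, then $Q\subsetneq I_P$, but $W_{P_u},W_{P_l}$ are in general \emph{not} constant on $Q$ (their Walsh frequency may sit far above $|Q|^{-1}$), so $\langle b_Q,W_{P_u}\rangle$ does not vanish and $\Pi_Tb_Q$ has no off-$Q$ decay. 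The Fefferman disjointness gives you orthogonality --- which you have already spent at $p=2$ --- but not the kernel regularity a CZ argument requires. Your pointwise H\"older step and the explicit $\delta(p)=\tfrac1p-\tfrac12$ are appealing, but they rest entirely on a uniform $L^p(\ell^2)$ bound that is strictly harder than what is needed; the paper's $\ell^p(L^p)$ interpolation, using only the soft endpoint $\sup_T\|\Pi_Tf\|_\infty\lesssim\|f\|_\infty$, gets the job done without it.
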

\begin{proof}

Let first $p\ge 2$. Consider the vector valued operator $Vf=(\Pi_Tf)_{T\in\F}$. Proposition \ref{hfuyhrufywiei3u884u8} shows that $\|V\|_{L^{\infty}\to l^{\infty}(L^\infty)}\lesssim 1$. On the other hand, the pairwise orthogonality of $\Pi_Tf$ implies the bound $\|V\|_{L^{2}\to l^{2}(L^2)}\lesssim 1$. Interpolation \cite{BePa} now gives
\begin{equation}
\label{elpbencanz}
\|V\|_{L^{p}\to l^{p}(L^p)}\lesssim 1.
\end{equation}
Using Proposition \ref{treeestimate}  we get
$$\|C_\F f\|_p^p=\sum_{T\in \F}\|C_Tf\|_p^p\lesssim \sum_{T\in \F}\mass(T)\|\Pi_T f\|_p^p\le \mass(\F)\|f\|_p^p.$$

Assume next that $p<2$. Split $\F$ into $\|N_\F\|_{\infty}$ forests $\F_k$ so that for each $k$ and each $T,T'\in \F_k$ we have $I_T\cap I_{T'}=\emptyset$. Note that we have  as before
$$\|C_\F f\|_p^p=\sum_k\sum_{T\in \F_k}\|C_Tf\|_p^p\lesssim \sum_k\sum_{T\in \F_k}\mass(T)\|f1_{I_T}\|_p^p$$$$\le \mass(\F)\sum_k\|f\|_p^p=\mass(\F)\|N_\F\|_{\infty} \|f\|_p^p.$$
To close the argument, interpolate the $L^p$ and $L^2$ bounds for the operator $f\mapsto C_\F f$.
\end{proof}

We are now ready to prove the following variant of Theorem \ref{thm:main}. Note that this weaker version is enough to prove Theorem \ref{thm:Bil}, via the standard approximation argument. Then, in the case $1<p\le 2$ one can actually use Stein's Continuity Principle \cite{Ste} to show that Theorem \ref{thm:Bil}  implies the stronger Theorem \ref{thm:main}. The author is not aware of any substitute argument which proves the same implication for $p>2$. However, in Section \ref{victor} we present a recent delicate refinement of Fefferman's argument due to Lie \cite{Lie2} which closes this gap.
\begin{theorem}
\label{sub:main}
For each $1<q<p<\infty$ we have
$$\|C_{\P_{[0,1]}} f\|_{q}\lesssim \|f\|_p$$
\end{theorem}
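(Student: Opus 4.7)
My plan is to establish restricted weak type $(p,q)$ for every pair $1<q<p<\infty$ and then upgrade to strong type $L^p\to L^q$ by Marcinkiewicz interpolation between two such pairs. Since $C_\P f$ is supported in $[0,1]$ when $\P\subset \P_{[0,1]}$, I fix $E\subset[0,1]$, $f=1_E$, and $\lambda>0$, and aim to prove
$$|\{x:|C_\P f(x)|>\lambda\}|\lesssim \lambda^{-q}|E|^{q/p}.$$
The regime $\lambda\le c_0|E|^{1/p}$ is handled by the trivial bound $1\lesssim \lambda^{-q}|E|^{q/p}$; the substantive work lies in the regime $\lambda\gg |E|^{1/p}$, where I write $R:=\lambda/|E|^{1/p}$.

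The three tools developed in this section combine as follows. First, the Mass decomposition (Proposition~\ref{massdecomp}) produces $\P=\bigcup_n\P_n\cup\P_{null}$ with $\mass(\P_n)\le 2^{-n}$, $\sum_{T\in\F_n}|I_T|\lesssim 2^n$, and hence $\|N_{\F_n}\|_{BMO_\Delta}\lesssim 2^n$. Next, for each $n$ I invoke the Fefferman trick (Lemma~\ref{Fefftrick}) with parameter $K_n:=n+M$, where $M:=(q+1)\log R$. This yields an exceptional set $F_n$ of measure at most $e^{-(n+M)}$ and partitions the surviving bitiles into $O(n+\log M)$ Fefferman forests $\F'$ with $\|N_{\F'}\|_\infty\lesssim(n+M)\cdot 2^n$. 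On each such Fefferman forest the Forest estimate (Proposition~\ref{forestesftttttimate}) yields
$$\|C_{\F'}f\|_p\lesssim 2^{-n/p}\bigl((n+M)2^n\bigr)^{\delta(p)}|E|^{1/p},$$
and the crucial condition $\delta(p)<1/p$ ensures that, after summing over the $O(n+\log M)$ Fefferman forests inside each $\P_n$ and then over $n$, the series is a geometric series and gives
$$\bigl\|C_\P f\cdot 1_{F^c}\bigr\|_p\lesssim (\log M)^{C(p)}|E|^{1/p},\qquad |F|=\Bigl|\bigcup_n F_n\Bigr|\lesssim e^{-M}=R^{-(q+1)}.$$

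Finally, Chebyshev bounds $|\{|C_\P f|>\lambda\}\cap F^c|\lesssim \lambda^{-p}(\log M)^{C(p)p}|E|=R^{-p}(\log M)^{C(p)p}$, and since $R^{p-q}$ dwarfs $(\log\log R)^{C(p)p}$ for large $R$, both this term and $|F|$ are bounded by $R^{-q}=\lambda^{-q}|E|^{q/p}$, as required. The main obstacle, and the precise reason I fall short of strong type $L^p\to L^p$, is the polylogarithmic loss intrinsic to the Fefferman trick (amplified by the $\delta(p)>0$ loss in the Forest estimate when $p<2$): it forces a delicate coupling of $K_n$ with $\log R$ to balance the excluded measure against the Chebyshev contribution, and it is exactly this loss that motivates the refinement of \cite{Lie2} presented in Section~\ref{victor}.
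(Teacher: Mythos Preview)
Your approach is essentially the paper's: mass decomposition, Fefferman trick with parameter tied to the level set, forest estimate, then balance the Chebyshev term against the exceptional set. Two small points. The restriction to $f=1_E$ followed by Marcinkiewicz is an unnecessary detour: the paper runs the identical argument for general $f\in L^p$ (taking $K_n=(n+1)L$), obtains $\|C_\P f\|_{L^p(F^c)}\lesssim L^{1/p}\|f\|_p$ and $|F|\lesssim e^{-L}$, optimizes $L$, and integrates the resulting distributional estimate directly to get the $L^q$ bound. Also, your intermediate bound is miscounted when $p<2$: the Forest estimate contributes a factor $((n+M)2^n)^{\delta(p)}$, so already the $n=0$ term of the sum over $n$ is of order $M^{\delta(p)}\log M$, not $(\log M)^{C(p)}$. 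This slip is harmless for the conclusion, since $M\sim\log R$ makes $M^{\delta(p)p}$ merely polylogarithmic in $R$ --- your ``$(\log\log R)^{C(p)p}$'' should read ``$(\log R)^{C'(p)}$'' --- and $R^{p-q}$ still dominates.
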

\begin{proof}
By invoking a limiting argument, it suffices to prove the estimate with $\P_{[0,1]}$ replaced by a finite convex collection  $\P\subset \P_{[0,1]}$, as long as the implicit constant in the inequality is independent of $\P$.  Apply  Proposition \ref{massdecomp} to $\P$ to get $\P_n,\F_n$. Apply Lemma \ref{Fefftrick} to each $\F_n$ with $K=K_n:=(n+1)L$, $L\ge 2$ to be determined later. We get exceptional sets $|F_n|\le e^{-L(n+1)}$ and the partitions of $\F_{F_n,n}:=\{T\in\F_n:I_T\not\subset F_n\}$ into $O(n+\log L)$ Fefferman forests $\F_{n,k}$. Define $F=\cup_{n}F_n$ and note that $|F|\lesssim e^{-L}$. Let also $\P^*$ be the bitiles in the trees from  $\cup_n\F_{F_n,n}$. Note that $C_\P f(x)=C_{\P^*}f(x)$ on $F^c$. By Proposition \ref{forestesftttttimate} and linearity we get
$$\|C_{\P}f\|_{L^p(F^c)}\le\sum_{n}\sum_k\|C_{\F_{n,k}}f\|_{L^p([0,1])}\lesssim \sum_n2^{-n/p}(n+\log L)(n2^n L)^{\delta(p)}\|f\|_p\lesssim L^{1/p}\|f\|_p.$$
Thus, for each $\lambda>1$
$$|\{x\in[0,1]:|C_{\P}f(x)|>\lambda\}|\lesssim L\left(\frac{\|f\|_p}{\lambda}\right)^{p}+e^{-L}.$$
By optimizing $L$ we get that for each $s<p$
$$|\{x\in[0,1]:|C_{\P}f(x)|>\lambda\}|\lesssim \left(\frac{\|f\|_p}{\lambda}\right)^{s}.$$
A simple integration argument finishes the proof of the theorem.

\end{proof}

\section{Combining mass and size: The Lacey-Thiele argument}
\label{sec:Lac-Thi}
The Fourier version of this argument is due to Lacey and Thiele \cite{LT-C}. It uses both mass and size and thus it combines elements of the two proofs we have seen in earlier sections. The interplay between mass and size made this approach particularly well suited for applications to the problem of singular integrals along vector fields \cite{LaLi1}, \cite{Ba}, \cite{BT},  since it  opened the door to the use of Kakeya type maximal functions.

The definition of $\size(\P)$ will remain the same as in Section \ref{sec:3}. Let $F$ be a finite measure subset of $\R_+$. We modify slightly the definition of mass.
\begin{definition}
The mass of a convex collection $\P$ of bitiles relative to $F$ is defined as
$$\mass_F(\P)=\sup_{P\in \P}\frac{|E_F(P)|}{|I_P|}$$
where now $E_F(P)=F\cap I_P\cap N^{-1}(\omega_P)$.
\end{definition}
We have the following version of Proposition \ref{massdecomp}.

\begin{proposition}[Mass decomposition]
\label{massdecompver2}Let $\P\subset \P_{all}$ be a finite convex collection of bitiles. Then
$$
\P= \bigcup_{m\ge 0}\P_{m}^* \cup \P_{null}^*
,$$
such that \begin {itemize}
\item[$\cdot$]$\mass_F (\P_{m}^* )\leq 2^{-m}$:
\item[$\cdot$] $\P_{m}^*$ is a convex forest with trees $T\in\F_m^*$ satisfying
$$
\sum_{T\in\F_m^*}  |I_T|\lesssim 2^{m}|F|,
$$

\item [$\cdot$]$C_Pf\equiv 0$ for each $P\in\P_{null}^*$ and each $f:\R_+\to\C$.
\end{itemize}
\end{proposition}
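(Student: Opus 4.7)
The plan is to iterate an $F$-analogue of Lemma \ref{masslemma}, following the same two-step recipe used for Proposition \ref{massdecomp}; the only bookkeeping change is that the ambient set controlling the Carleson sum $\sum|I_T|$ is now $F$ instead of $[0,1]$.

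First I would establish the halving lemma: for any finite convex $\P\subset\P_{all}$ there is a decomposition $\P=\P_{hi}\cup\P_{lo}$ into convex pieces with $\mass_F(\P_{lo})\le\tfrac12\mass_F(\P)$ and with $\P_{hi}$ organised as a convex forest $\F$ satisfying $\sum_{T\in\F}|I_T|\lesssim \mass_F(\P)^{-1}|F|$. The selection algorithm is verbatim the one in Lemma \ref{masslemma}: initialise $\P_{stock}:=\P$, and while possible pick a maximal $t\in\P_{stock}$ with $|E_F(t)|/|I_t|>\mass_F(\P)/2$, form $T(t)=\{P\in\P_{stock}:P\le t\}$, move $T(t)$ into $\F$, and reset $\P_{stock}:=\P_{stock}\setminus T(t)$. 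Convexity of the stock and of each tree $T(t)$ at every step is preserved by exactly the argument used in the original lemma, since it depends only on convexity of the starting collection and on the order relation $\le$, neither of which is affected by the change from $\mass$ to $\mass_F$. The selected tops $t$ are pairwise incomparable under $\le$, hence pairwise disjoint as rectangles, so the sets $E_F(t)\subset F$ are pairwise disjoint and
\[
\sum_{T\in\F}|I_T|=\sum_t|I_t|\le 2\mass_F(\P)^{-1}\sum_t|E_F(t)|\le 2\mass_F(\P)^{-1}|F|.
\]
This is the sole place where the ambient set enters the bookkeeping; in the original argument $|F|=1$ is hidden because $E(t)\subset[0,1]$.

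To deduce the proposition I would iterate this halving step. Since $E_F(P)\subset I_P$ we have trivially $\mass_F(\P)\le 1$, so the iteration can start at $m=0$: at stage $m$ the current stock has mass at most $2^{-m}$, applying the halving lemma extracts $\P_m^*$ as the high piece with $\sum_{T\in\F_m^*}|I_T|\lesssim 2^m|F|$, and leaves a residue of mass at most $2^{-(m+1)}$. The process terminates when the residue has $\mass_F=0$; that residue becomes $\P_{null}^*$ and consists of bitiles with $|E_F(P)|=0$. Exactly as in Proposition \ref{massdecomp}, the portion of the support of $C_Pf$ that interacts with $F$ then lies in a null set, which is all that will be needed from $\P_{null}^*$ in the sequel. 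I do not expect any real obstacle: the proof is a mechanical transcription of the $[0,1]$-supported version, the only substantive substitution being the estimate $\sum|E(t)|\le 1$ replaced by $\sum|E_F(t)|\le|F|$.
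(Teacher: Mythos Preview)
Your proposal is correct and follows exactly the route the paper intends: Proposition \ref{massdecompver2} is stated without proof as the obvious $F$-relative analogue of Proposition \ref{massdecomp}, obtained by iterating the $\mass_F$ version of Lemma \ref{masslemma} with the single substitution $\sum_t|E(t)|\le 1\;\rightsquigarrow\;\sum_t|E_F(t)|\le |F|$. Your remark about $\P_{null}^*$ is also on point: strictly speaking $\mass_F(P)=0$ only forces $C_Pf\cdot 1_F\equiv 0$ a.e., which is precisely what is used in the pairing $\langle C_\P f,1_F\rangle$ in Section \ref{sec:Lac-Thi}.
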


\begin{proposition}[Tree estimate]
\label{sceruyfr45f832p}Let $T$ be a convex tree. Then for each $f\in L^2(\R_+)$
$$|\langle C_T f, 1_F\rangle|\lesssim |I_T|\mass_F(T)\size_f(T).$$
\end{proposition}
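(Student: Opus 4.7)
The plan is to mirror the proof of Proposition~\ref{treeestimate}, with two modest modifications that introduce the factors $\mass_F(T)$ and $\size_f(T)$ in place of $\mass(T)$ and $\|f\|_p^p$. First I would note that the splitting $T = T_l \cup T_o$ preserves convexity and leaves $\mass_F$ and $\size_f$ bounded by the corresponding quantities for $T$, so it suffices to treat the lacunary and overlapping cases separately. In either case, I would use the partition $I_T = \bigsqcup_{J\in\J_T}J$ with $\J_T$ as in Section~\ref{FEFE}, and recall that on each $J \in \J_T$ only those $P\in T$ with $J \subsetneq I_P$ can contribute, so $\supp C_Tf \cap J \subset G_J$.

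The only genuinely new ingredient is the $\mass_F$ analogue of the Carleson measure estimate \eqref{Carmeasmass}: for each $J\in \J_T$ meeting $\supp C_Tf$, the convexity of $T$ forces the dyadic parent of $J$ to equal $I_{P(J)}$ for some $P(J)\in T$, and the proof of Proposition~\ref{treeestimate} already establishes the set containment $G_J \subset E(P(J))$. Intersecting with $F$ yields $F\cap G_J\subset E_F(P(J))$, whence
$$|F\cap G_J| \le |E_F(P(J))| \le \mass_F(T)\,|I_{P(J)}| = 2\mass_F(T)|J|.$$

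Combining this with \eqref{jvgirtg782r]o3olfkjtiohj}, which provides $\|C_Tf\|_{L^\infty(J)} \le \inf_{y\in J} M(V_Tf)(y)$ with $V_Tf = \Pi_Tf$ in the overlapping case and $V_Tf = \O_T\Pi_Tf$ in the lacunary case, gives
$$|\langle C_Tf, 1_F\rangle| \le \sum_{J\in\J_T}|F\cap G_J|\cdot\|C_Tf\|_{L^\infty(J)} \le 2\mass_F(T)\int_{I_T}M(V_Tf)(y)\,dy.$$
Cauchy-Schwarz followed by the $L^2$ boundedness of $M$, Lemma~\ref{lemasingtree1} in the lacunary case, and the $L^2$ size bound \eqref{e18} from Proposition~\ref{dkjuyer8fu4589t75899rio3t} then yield
$$\int_{I_T}M(V_Tf)(y)\,dy \le |I_T|^{1/2}\|M(V_Tf)\|_2 \lesssim |I_T|^{1/2}\|\Pi_Tf\|_2 \le \size_f(T)|I_T|,$$
closing the argument. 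I do not anticipate any real obstacle: the proof is essentially a rearrangement of the ingredients from Section~\ref{FEFE}, with the passage from $\mass$ to $\mass_F$ coming from the trivial containment $E_F(P) \subset E(P)$, and the $\size_f$ factor coming from using the $L^\infty$--$L^1$ pairing (via $\|1_F\|_\infty = 1$) together with the $L^2$ estimate on $\Pi_Tf$ rather than an $L^p$ estimate on $Mf$.
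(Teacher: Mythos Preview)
Your proposal is correct and follows essentially the same route as the paper's proof: partition $I_T$ by the stopping intervals $\J_T$, use the Carleson-measure bound $|F\cap G_J|\lesssim \mass_F(T)|J|$ together with the pointwise control \eqref{jvgirtg782r]o3olfkjtiohj}, then apply Cauchy--Schwarz on $I_T$ and finish with Lemma~\ref{lemasingtree1} and \eqref{e18}. The only cosmetic difference is that the paper redefines $G_J$ directly via $E_F(P)$ rather than intersecting the old $G_J$ with $F$, and applies Cauchy--Schwarz on each $J$ before summing; the two computations are equivalent.
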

\begin{proof}
For $J\in\J_T$ define as before
$G_J=J\cap(\bigcup_{P\in T:J\subset I_P} E_F(P)).$
Recall that $C_Tf1_F$ restricted to $J$ is supported on $G_J$. We then estimate like in the proof of Proposition \ref{treeestimate}
$$|\langle C_T f, 1_F\rangle|\le \sum_{J\in\J_T}\int_{G_J}|C_T f|\le \mass_F(T)\sum_{J\in\J_T}\int_JM(V_T(f))\le $$$$\mass_F(T)\sum_{J\in\J_T}|J|^{1/2}(\int_J|M(V_T(f))|^2)^{1/2}\le \mass_F(T)|I_T|^{1/2}\|V_T(f)\|_2,$$
where $V_Tf=\Pi_Tf$ if $T$ is overlapping and $V_T=O_T(\Pi_Tf)$ is $T$ is lacunary. The result now follows from \eqref{e18} and Lemma \ref{lemasingtree1}.
\end{proof}
\begin{proof}
[of Theorem \ref{thm:main}] By using restricted type interpolation and a limiting argument, it suffices to prove that given $1<p<\infty,p\not=2$ with dual exponent $p'$ and given finite measure subsets  $E,G$ of $\R_+$, there exists $F\subset G$ with $|F|\ge |G|/2$ such that
\begin{equation}
\label{e21}
|\langle C_\P f, 1_F\rangle|\lesssim |E|^{1/p}|F|^{1/p'}
\end{equation}
for  each $|f|\le 1_E$ and each finite convex $\P\subset \P_{all}$.

We analyze first the case $p>2$, when we can take $F=G$. Note that $\size_f(\P)\lesssim 1$. Apply to $\P$ Propositions \ref{2erbyu76i87o} and \ref{massdecompver2} to get $\P_n,\P_m^{*},\F_n,\F_m^*$ for $m\ge 0$ and $2^{-n}\lesssim 1$. Define $\P_{n,m}:=\P_n\cap \P_m^*$. We  organize the bitiles in $\P_{n,m}$ into a forest in two different way. Call $\F_{n,m}$ the trees $T\cap \P_m^*$ with $T\in\F_n$ and call $\F_{n,m}^*$ the trees $T\cap \P_n$ with $T\in\F_m^*$. Note that the resulting trees are convex and
$$\sum_{T\in \F_{n,m}}|I_T|\lesssim 2^{2n}|E|$$
$$\sum_{T\in \F_{n,m}^*}|I_T|\lesssim 2^{m}|F|.$$
Thus, using Proposition \ref{sceruyfr45f832p} with the two partitions above, we get
$$|\langle C_{\P_{n,m}} f, 1_F\rangle|\lesssim 2^{-n}2^{-m}\min(2^{2n}|E|, 2^{m}|F|).$$
We conclude that
$$|\langle C_\P f, 1_F\rangle|\le \sum_{2^{-n}\lesssim 1,m\ge 0}|\langle C_{\P_{n,m}} f, 1_F\rangle|\lesssim \sum_{n,m}2^{-n}2^{-m}(2^{2n}|E|)^{1/p}(2^{m}|F|)^{1/p'}\lesssim |E|^{1/p}|F|^{1/p'}.$$

Consider now the case $p<2$. We define
$$F:=G\setminus\{x:M(1_E)(x)>10\frac{|E|}{|G|}\}.$$ Note that $|F|\ge |G|/2$. Call $\P^*=\{P\in\P:I_P\not\subset F^c\}$ and note that
$\langle C_\P f, 1_F\rangle=\langle C_{\P^*} f, 1_F\rangle$. Proposition \ref{djhfdftyert} implies the bound $\size_f(\P^*)\lesssim \frac{|E|}{|F|}$.

Proceeding as before, with the two partitions this time for $\P^*$, we obtain
$$|\langle C_{\P^*} f, 1_F\rangle|\le \sum_{2^{-n}\lesssim \frac{|E|}{|F|},m\ge 0}|\langle C_{\P_{n,m}} f, 1_F\rangle|\lesssim $$$$\sum_{2^{-n}\lesssim \frac{|E|}{|F|},m\ge 0}2^{-n}2^{-m}(2^{2n}|E|)^{1/p'}(2^{m}|F|)^{1/p}\lesssim |E|^{1/p}|F|^{1/p'}.$$
\end{proof}

\section{A direct proof of strong $L^p$ bounds}
\label{victor}
All three arguments presented earlier rely on proving weak type bounds and using interpolation. Moreover, as observed in Section  \ref{FEFE},  Fefferman's argument does not imply strong type $L^p$ bounds for the operator $C_{\P_{[0,1]}}$ when $p\ge 2$. The recent argument of Lie \cite{Lie2} proves that
\begin{equation}
\label{fjrgh845788-0990jgikbjg}
\|C_{\P_{[0,1]}}f\|_{p}\lesssim \|f\|_{L^p[0,1]}
\end{equation}
directly, without any interpolation that restricts $f$ to special classes of functions (such as sub-characteristic). The case $p=2$ will require no interpolation in the argument, while the case $p\not=2$ will rely on vector valued interpolation for various operators relevant to the proof.  We will restrict attention to $p\ge2$.

This argument, while also interesting in itself, has been developed in \cite{Lie2} in order to solve a conjecture of Stein on the boundedness of the quadratic Carleson operator.

We will keep the notation for mass from Section \ref{FEFE} and will restrict attention to the bitiles in $\P_{[0,1]}$. Recall (see Lemma \ref{Fefftrick}) that Fefferman's approach consisted of decomposing every forest into a small number of Fefferman forests, outside a small exceptional set. The lemma below shows how to iterate the Fefferman trick inside the exceptional set, until all bitiles are exhausted. Each stage of the iteration creates more layers of Fefferman forests, but the size of their spatial support will get exponentially smaller.

\begin{lemma}
\label{l1}
Let $\P\subset\P_{[0,1]}$ be a finite convex collection of bitiles. Then we have the following partitions
\begin{equation}
\label{e1}
\P=\bigcup_{k\ge 0}{\P_k}
\end{equation}\begin{equation}
\label{e2}
\P_k=\bigcup_{m\ge 1}\Q^{m}_k\end{equation}\begin{equation}
\label{e3}
\Q^m_k=\bigcup_{1\le n\lesssim k}\F^{m,n}_k
\end{equation}
where each $\F^{m,n}_k$ is a Fefferman forest such that
\begin{equation}
\label{e77}
\|N_{\F^{m,n}_k}\|_{\infty}\lesssim k2^k
\end{equation}
and
\begin{equation}
\label{e7}
\mass(\P\setminus\bigcup_{i=0}^{k-1}\P_i)\le 2^{1-k}.
\end{equation}
Moreover, there are sets $(E_{k}^m)_{m,k\ge 0}$ which are finite disjoint unions $\J_k^m$ of  dyadic intervals such that
\begin{equation}
\label{e4}
E_{k}^{m+1}\subset E_{k}^m\text{ and }|E_{k}^{m+1}\cap J|\le e^{-10k}|J|, \text{ for }J\in \J_k^m
\end{equation}
\begin{equation}
\label{e5}
P\in\Q^{m+1}_k \text{ implies }I_P\subset E_{k}^{m},I_P\not\subset E_{k}^{m+1}
\end{equation}
\begin{equation}
\label{e6}
P\in\Q^{m+1}_k \text{ implies }|I_P\cap E_{k}^{m+2}|\le e^{-10k}|I_P|
\end{equation}
\end{lemma}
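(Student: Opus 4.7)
The plan is to combine two nested decompositions of $\P$: an outer layer indexed by mass levels $2^{-k}$ obtained from Proposition \ref{massdecomp}, and, for each level $k$, an inner iteration that stratifies bitiles spatially by iteratively applying the Fefferman trick (Lemma \ref{Fefftrick}) on a decreasing sequence of exceptional sets built via dyadic John--Nirenberg.

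\textbf{Outer layer.} First apply Proposition \ref{massdecomp} to $\P$ to get the partition \eqref{e1} satisfying \eqref{e7}, with each $\P_k$ organized as a convex forest $\F_k$ whose tops satisfy $\sum_{T\in\F_k}|I_T|\lesssim 2^{k}$. The $BMO_\Delta$ bound preceding Lemma \ref{Fefftrick} then gives $\|N_{\F_k}\|_{BMO_\Delta}\lesssim 2^{k}$; the null collection is discarded as it contributes nothing to $C_\P$.

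\textbf{Inner layer.} Fix $k$ and construct $E_k^m$ by induction. Set $E_k^0:=[0,1]$ and $\J_k^0:=\{[0,1]\}$. Suppose $E_k^m$ and its partition $\J_k^m$ into maximal dyadic intervals are given; let $\F_k^{(m)}$ be the convex subforest of $\F_k$ consisting of trees with $I_T\subset E_k^m$. Pick $C_1$ large and set $K_k:=C_1 k 2^{k}$. Define
$$E_k^{m+1}:=\Big\{x\in E_k^m:\,\sum_{T\in \F_k^{(m)}}1_{I_T}(x)>K_k\Big\}.$$
Since $\|N_{\F_k^{(m)}}\|_{BMO_\Delta}\lesssim 2^k$, dyadic John--Nirenberg applied inside each $J\in \J_k^m$ yields \eqref{e4}, and $\J_k^{m+1}$ is then the natural decomposition of $E_k^{m+1}$ into maximal dyadic subintervals of intervals in $\J_k^m$.

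\textbf{Selecting the Fefferman forests.} Let $\Q^{m+1}_k$ consist of the bitiles $P$ in trees of $\F_k^{(m)}$ with $I_P\not\subset E_k^{m+1}$; this yields \eqref{e2} and \eqref{e5}, and the iteration terminates after finitely many steps since $\P$ is finite. For each $P\in \Q^{m+1}_k$, picking any $x\in I_P\setminus E_k^{m+1}$ shows that $\#\{T\in\F_k^{(m)}:P\le P_T\}\le K_k$, so the Fefferman selection of Lemma \ref{Fefftrick}, indexed by dyadic shells of this count, partitions $\Q^{m+1}_k$ into $O(\log K_k)=O(k)$ Fefferman forests $\F^{m+1,n}_k$, giving \eqref{e3}. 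At any $x$, distinct tops of trees in $\F^{m+1,n}_k$ are pairwise incomparable (hence pairwise disjoint as rectangles), so their frequency intervals at height $x$ are disjoint and each contributes disjointly to the count of trees of $\F_k^{(m)}$ above $x$; this forces $\|N_{\F^{m+1,n}_k}\|_\infty\lesssim K_k\lesssim k2^k$, which is \eqref{e77}.

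\textbf{Verification of \eqref{e6} and main difficulty.} For $P\in \Q^{m+1}_k$ the condition $I_P\not\subset E_k^{m+1}$ combined with the dyadic structure of $E_k^{m+1}$ forces either $I_P\cap E_k^{m+1}=\emptyset$ (trivial) or $I_P$ to contain every $J\in\J_k^{m+1}$ it meets. Summing \eqref{e4} at level $m+1$ over those $J\subset I_P$ gives $|I_P\cap E_k^{m+2}|\le e^{-10k}|I_P|$. The main delicate point is the calibration of the threshold $K_k\sim k\|N_{\F_k}\|_{BMO_\Delta}$: it must be \emph{large enough} for John--Nirenberg to produce the tail exponent $e^{-10k}$ in \eqref{e4} and \eqref{e6}, yet \emph{small enough} that the Fefferman partition produces only $O(k)$ subforests per stage. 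The scale $k\,\|N_{\F_k}\|_{BMO_\Delta}$ is the unique common scale at which both requirements can be met, and choosing $C_1$ sufficiently large closes both simultaneously.
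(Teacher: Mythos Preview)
Your outer layer is fine, but the inner iteration has a genuine gap in how it interacts with the fixed forest structure $\F_k$. You define $\F_k^{(m)}$ by \emph{filtering trees} of $\F_k$ according to whether $I_T\subset E_k^m$, and then set $\Q_k^{m+1}$ to be the bitiles in trees of $\F_k^{(m)}$ with $I_P\not\subset E_k^{m+1}$. But a bitile $P$ can sit in a tree $T\in\F_k$ with $I_T\not\subset E_k^1$ while nonetheless $I_P\subset E_k^1$. Such a $P$ is excluded from $\Q_k^1$ (because $I_P\subset E_k^1$), and for every $m\ge 1$ its tree $T$ fails $I_T\subset E_k^m$, so $P$ never appears in any $\F_k^{(m)}$ and hence in no $\Q_k^{m+1}$. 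Thus \eqref{e2} is not a partition of $\P_k$, and your claimed verification of \eqref{e5} does not establish exhaustion. (As a secondary issue, the union $\bigcup_{T:\,I_T\subset E_k^m}T$ need not be convex in $\P_k$, since comparable bitiles from trees with different top intervals can interleave.)

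The paper avoids this by \emph{not} fixing a forest structure on $\P_k$ in advance. At each stage $m$ it re-selects maximal bitiles inside $\{P\in\P_{stock}:I_P\subset E_k^m,\ \mass(P)\ge 2^{-k}\}$ and builds fresh trees below them; the counting function $N_k^m$ is over these new tops. This re-organization guarantees that any bitile with $\mass(P)\ge 2^{-k}$ and $I_P\subset E_k^m$ is automatically below some new top, hence lands in $\P_k^m$ and is captured in $\Q_k^{m+1}$ at the first $m$ with $I_P\not\subset E_k^{m+1}$. Your argument becomes correct if you replace the tree-filtering step by this bitile-level re-selection of maximal elements at every stage; the remaining pieces (John--Nirenberg with threshold $\sim k2^k$, the Fefferman dyadic-shell partition giving $O(k)$ forests, and the verification of \eqref{e6}) then go through essentially as you wrote them.
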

\begin{proof}
Set $\P_0:=\emptyset$. Assume that given  $k>0$, each $\P_i$ has been defined for each $0\le i\le k-1$ so that it satisfies \eqref{e2} - \eqref{e6}.

Let us see how to define $\P_k$. For the rest of the argument set  $\P_{stock}=\P\setminus(\cup_{0\le i\le k-1}\P_i)$. Let
$\P_{k,max}^{0}$ be the maximal bitiles in the set
$$\{P\in\P_{stock}:\mass(P)\ge 2^{-k}\}.$$
Define $E_{k}^0:=[0,1]$,
$$\P_k^0=\{P\in \P_{stock}:P\le P'\text{ for some }P'\in \P_{k,max}^{0}\}$$
$$N_{k}^0(x)=\sum_{P\in\P_{k,max}^0}1_{I_P}(x)$$
and the exceptional set
$$E_{k}^1=\{x: N_k^0(x)\ge C_1k2^k\}$$
where $C_1$ is a large enough constant to be determined later. Note that since the sets $E_P$ are pairwise disjoint for $P\in \P_{k,max}^{0}$, it follows that  $\|N_k^0\|_{BMO_\Delta}\le 2^{k}$. This and John-Nirenbeg in turn imply \eqref{e4} for $m=0$.
Define
$$\Q_{k}^1=\{P\in\P_{k}^0:I_P\not\subset E_{k}^1\}$$
and note that \eqref{e5} holds for $m=0$.

We continue the construction of the sets $E_k^{m}$, $\Q_k^m$ inductively. Fix $m_0\ge 1$. Assume $E_k^{0},\ldots E_{k}^{m_0}$ and $\Q_k^{1},\ldots, \Q_k^{m_0}$ have been constructed so that \eqref{e4}-\eqref{e5} hold for each $0\le m\le m_0-1$ and so that \eqref{e6} holds for each $m\le m_0-2$.
Define
$\P_{k,max}^{m_0}$ to be the maximal bitiles in the set
$$\{P\in\P_{stock}:I_P\subset E_{k}^{m_0},\;\mass(P)\ge 2^{-k}\}.$$
Let also
$$\P_k^{m_0}=\{P\in \P_{stock}:P\le P'\text{ for some }P'\in \P_{k,max}^{m_0}\},$$
$$N_{k}^{m_0}(x)=\sum_{P\in\P_{k,max}^{m_0}}1_{I_P}(x)$$
and the exceptional set
$$E_{k}^{m_0+1}=\{x: N_k^{m_0}(x)\ge C_1k2^k\}.$$
Finally, define
$$\Q_{k}^{m_0+1}=\{P\in\P_{k}^{m_0}:I_P\not\subset E_{k}^{m_0+1}\}.$$
Note that \eqref{e4}-\eqref{e5} hold when $m=m_0$, for the same reasons as before. To check \eqref{e6} for $m=m_0-1$, let $P\in \Q^{m_0}_k$. Since $I_p\not\subset E_k^{m_0}$, we have that either $I_P\cap E_k^{m_0}=\emptyset$, or $I_P\cap E_k^{m_0}$ is a finite disjoint union of dyadic intervals $J\in \J_k^{m_0}$. Note that John-Nirenberg's inequality guarantees that for each $J\in \J_k^{m_0}$ we have
$$|J\cap E_k^{m_0+1}|\le e^{-10k}|J|,$$
which immediately implies \eqref{e6} for $m=m_0-1$.

It is easy to see that each $\Q_k^{m}$ is convex. It can be organized into a forest of trees with tops in $$\{P\in\P_{k,max}^{m-1}:I_P\not\subset E_k^{m}\}.$$ Note that the counting function has a favorable $L^\infty$ bound
$$\|\sum_{P\in\P_{k,max}^{m-1}:I_P\not\subset E_k^{m}}1_{I_P}\|_{\infty}\le C_1k2^k.$$
The forests $\F^{m,n}_k$ are now obtained via the Fefferman trick.

It is immediate that the collections $\Q_k^m$ are pairwise disjoint for fixed $k$, because of \eqref{e5}. Note that the algorithm (for fixed $k$) will end with a finite value of $m$. We set $\Q_k^m$ to be empty for all larger values of $m$ and define $\P_k=\bigcup_{m\ge 1}\Q^{m}_k$. Finally, observe that when the algorithm for fixed $k$ is over, there can not be any $P$ left in $\in\P_{stock}\setminus\P_k$ with $\mass(P)\ge 2^{-k}$. Indeed, note that $I_P\subset E_k^0$ trivially. While $I_P\subset E_k^m$ the algorithm will continue to run. But by \eqref{e4}, there should exist $m$ such that $I_P\subset E_{k}^{m},I_P\not\subset E_{k}^{m+1}$. Thus, if $\mass(P)\ge 2^{-k}$, then $P$ gets automatically selected in $\Q_{k}^{m+1}$.
\end{proof}

A distinct feature of the approach in this section is the almost orthogonality between a function with small support and a function locally constant on this support.

\begin{lemma}[Almost orthogonality]
\label{lemaortho}
Let $\J$ be a collection of pairwise disjoint  intervals and let $f:\cup_{J\in\J}J\to\C$ such that $|f|$ is constant on each $J\in\J$. Assume $E_J\subset J$ satisfies $|E_J|\le \alpha|J|$ for each $J\in\J$. Then for each $g:\cup_{J\in\J}E_J\to\C$ we have
$$|\langle f, g\rangle|\le \alpha^{1/2}\|f\|_2\|g\|_2.$$
\end{lemma}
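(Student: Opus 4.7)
The plan is to exploit the two hypotheses separately on each interval $J$ and then combine via Cauchy-Schwarz at the end. Since the intervals $J\in\J$ are pairwise disjoint, we can split
\[
\langle f,g\rangle=\sum_{J\in\J}\int_{E_J}f\overline{g}\,dx.
\]

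On each $J$, the hypothesis that $|f|$ is constant—call its value $c_J$—lets me pull $|f|$ out of the integral over $E_J$, giving $|\int_{E_J}f\bar g|\le c_J\int_{E_J}|g|$. The constancy also encodes the identity $c_J=|J|^{-1/2}\|f\|_{L^2(J)}$, which is exactly what will let me recover an $L^2$ norm of $f$ at the end. For the integral of $|g|$ over the small set $E_J$, I apply Cauchy-Schwarz together with the size assumption $|E_J|\le\alpha|J|$ to get
\[
\int_{E_J}|g|\le |E_J|^{1/2}\|g\|_{L^2(E_J)}\le \alpha^{1/2}|J|^{1/2}\|g\|_{L^2(E_J)}.
\]
Multiplying this with the bound for $c_J$ produces $|\int_{E_J}f\bar g|\le \alpha^{1/2}\|f\|_{L^2(J)}\|g\|_{L^2(E_J)}$, where the factors $|J|^{1/2}$ cancel exactly—this cancellation is where the hypothesis that $|f|$ is constant on each $J$ (and not merely bounded) is essential, and it is the only mildly subtle point.

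Finally I sum over $J\in\J$ and apply Cauchy-Schwarz in the counting measure on $\J$:
\[
\sum_{J\in\J}\|f\|_{L^2(J)}\|g\|_{L^2(E_J)}\le\Bigl(\sum_{J}\|f\|_{L^2(J)}^2\Bigr)^{1/2}\Bigl(\sum_{J}\|g\|_{L^2(E_J)}^2\Bigr)^{1/2}=\|f\|_2\|g\|_2,
\]
using disjointness of the $J$ (and hence of the $E_J$) to recognize both sums as global $L^2$ norms. This yields $|\langle f,g\rangle|\le \alpha^{1/2}\|f\|_2\|g\|_2$, as required. There is no serious obstacle here; the lemma is essentially a careful bookkeeping of Cauchy-Schwarz, and the gain $\alpha^{1/2}$ comes exclusively from the smallness of $E_J$ inside $J$.
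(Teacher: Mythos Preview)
Your proof is correct and follows essentially the same approach as the paper: pull out the constant value of $|f|$ on each $J$, apply Cauchy--Schwarz on $E_J$ to extract the factor $|E_J|^{1/2}\le\alpha^{1/2}|J|^{1/2}$, use $c_J|J|^{1/2}=\|f\|_{L^2(J)}$, and then Cauchy--Schwarz over $J$. The paper's write-up is slightly terser but the steps are identical.
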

\begin{proof}
$$|\langle f, g\rangle|\le \sum_J \int_{E_J}|f||g|\le \sum_J|E_J|^{1/2}\sup_{x\in J}|f(x)|\|g\|_{L^2(J)}$$$$\le \alpha^{1/2}\sum_J\|f\|_{L^2(J)}\|g\|_{L^2(J)}\le \alpha^{1/2}\|f\|_2\|g\|_2$$
\end{proof}
\begin{proposition}[Almost orthogonality between forests]
\label{proportho}
Fix $k$. For each $m\ge 0$, let $\F^m$ denote one of the forests $\F^{m,n}_k$ from Lemma \ref{l1}. Define $$\Pi_{\F^m}f=\sum_{T\in\F^m}\Pi_Tf.$$ Then for each $m'\ge m+2$
$$|\langle \Pi_{\F^m}f, \Pi_{\F^{m'}}f\rangle|\lesssim e^{-(m'-m-2)k/2}\|f\|_2^2$$
\end{proposition}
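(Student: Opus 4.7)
The plan is to exploit two structural facts. First, $\Pi_{\F^{m'}}f$ is spatially supported in $E_k^{m'-1}$ by \eqref{e5}. Second, \eqref{e5} applied at level $m$ guarantees that every bitile $P$ in $\F^m\subset\Q^m_k$ satisfies $I_P\not\subset E_k^m$, which will be the source of exponential smallness inside the support of $\Pi_{\F^m}f$. Since $\F^m$ is a Fefferman forest, wave packets from distinct trees are orthogonal and $\Pi_{\F^m}$ is an $L^2$ orthogonal projection of norm at most one (and likewise for $\Pi_{\F^{m'}}$). When $m'=m+2$, Cauchy-Schwarz alone gives $|\langle\Pi_{\F^m}f,\Pi_{\F^{m'}}f\rangle|\le\|f\|_2^2$, matching $e^0=1$; so I only need to handle $m'\ge m+3$.

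For such $m'$, I use Cauchy-Schwarz in the form
\be{aa1}
|\langle\Pi_{\F^m}f,\Pi_{\F^{m'}}f\rangle|\le\|\Pi_{\F^m}f\|_{L^2(E_k^{m'-1})}\|\Pi_{\F^{m'}}f\|_2\le\|\Pi_{\F^m}f\|_{L^2(E_k^{m'-1})}\|f\|_2,
\end{equation}
reducing the task to bounding $\|\Pi_{\F^m}f\|_{L^2(E_k^{m'-1})}$. I decompose $E_k^{m+1}=\bigsqcup_{J\in\J_k^{m+1}}J$ (note $E_k^{m'-1}\subset E_k^{m+1}$) and derive a pointwise bound on $|\Pi_{\F^m}f|$ over each $J$. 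The key observation is that for any $T'\in\F^m$ meeting $J$, no bitile $P\in T'$ can have $I_P\subset J$: else $I_P\subset J\subset E_k^{m+1}\subset E_k^m$, contradicting \eqref{e5}. Consequently, $\{P\in T':x\in I_P\}=\{P\in T':I_P\supsetneq J\}$ for every $x\in J$, so its minimum $P^*_J(T')$ does not depend on $x$. The proof of Proposition \ref{hfuyhrufywiei3u884u8} then gives $\Pi_{T'}f|_J=\Pi_{P^*_J(T')}f|_J$, and the identities $|W_{P^*_u}|=|W_{P^*_l}|=|I_{P^*_J(T')}|^{-1/2}$ together with $|\langle f,W_p\rangle|\le |I_p|^{1/2}\inf_{I_p}Mf$ yield $|\Pi_{T'}f|\lesssim \inf_J Mf$ on $J$.

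Summing over the at most $\|N_{\F^m}\|_\infty\lesssim k2^k$ trees $T'$ containing $J$ (see \eqref{e77}) produces $|\Pi_{\F^m}f|\le K_J$ on $J$ with $K_J\lesssim k2^k\inf_J Mf$. Iterating \eqref{e4} exactly $m'-m-2$ times starting from $\J_k^{m+1}$ produces $|J\cap E_k^{m'-1}|\le e^{-10k(m'-m-2)}|J|$ for each $J\in\J_k^{m+1}$. Combining with $(\inf_J Mf)^2 |J|\le \int_J (Mf)^2$ and $\|Mf\|_2\lesssim \|f\|_2$,
\bas
\int_{E_k^{m'-1}}|\Pi_{\F^m}f|^2 &\le e^{-10k(m'-m-2)}\sum_{J\in\J_k^{m+1}}K_J^2|J|\\
&\lesssim (k2^k)^2 e^{-10k(m'-m-2)}\sum_J\int_J (Mf)^2\lesssim (k2^k)^2 e^{-10k(m'-m-2)}\|f\|_2^2.
\eas
Taking square roots and substituting into \eqref{aa1} yields $|\langle\Pi_{\F^m}f,\Pi_{\F^{m'}}f\rangle|\lesssim k2^k e^{-5k(m'-m-2)}\|f\|_2^2$. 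Since $9/2>\log 2$, one has $k2^k e^{-9k/2}=k e^{-k(9/2-\log 2)}\lesssim 1$ uniformly in $k\ge 1$; hence $k2^k e^{-5k(m'-m-2)}\lesssim e^{-(m'-m-2)k/2}$ whenever $m'-m-2\ge 1$, and the proposition follows.

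The subtlety I expect is the reduction $\Pi_{T'}f|_J=\Pi_{P^*_J(T')}f|_J$: it relies on carefully extracting, from the nested structure $E_k^m\supset E_k^{m+1}\supset\cdots$ in Lemma \ref{l1}, the fact that no bitile in $\F^m$ is spatially contained in a fine dyadic piece $J\in\J_k^{m+1}$. Once this is clean, the rest is a straightforward $L^\infty$ maximal-function estimate combined with the exponential measure decay encoded in \eqref{e4}, together with the trivial case $m'=m+2$.
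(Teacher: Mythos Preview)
Your argument is correct and follows the same overall strategy as the paper: localize $\Pi_{\F^{m'}}f$ spatially to $E_k^{m'-1}$ via \eqref{e5}, use that bitiles of $\F^m$ satisfy $I_P\not\subset E_k^m$ to control $\Pi_{\F^m}f$ on the fine dyadic pieces $J$, then exploit the exponential measure decay \eqref{e4} and absorb the factor $k2^k$ coming from \eqref{e77}.

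The execution differs in two places. The paper first splits $\F^m$ into $O(k2^k)$ layers with pairwise disjoint tops, observes (using \eqref{e54757897985960869078089=-}) that $|\Pi_{\F^m(l)}f|$ is actually \emph{constant} on each $J\in\J_k^m$, and then invokes the dedicated almost-orthogonality Lemma~\ref{lemaortho}. You bypass both the layering and Lemma~\ref{lemaortho}: you sum directly over the $\lesssim k2^k$ contributing trees and replace constancy by the pointwise maximal bound $|\Pi_{T'}f|\lesssim\inf_J Mf$, which is all that Cauchy--Schwarz plus the measure estimate actually require. Your route is slightly more elementary and self-contained; the paper's route isolates the mechanism in Lemma~\ref{lemaortho} for reuse and yields the marginally sharper exponent $e^{-5k(m'-m-1)}$ (working over $\J_k^m$ rather than $\J_k^{m+1}$), though both suffice after absorbing $k2^k$.
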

\begin{proof}
Note that due to \eqref{e77}
 we can split $\F^m$ into layers $\F^m=\bigcup_{l\lesssim k2^k}\F^m(l)$, so that for each $l$ the trees $T$ in $\F^m(l)$ have pairwise disjoint intervals $I_T$.

Arguing like in the proof of Proposition \ref{hfuyhrufywiei3u884u8}
we first observe that for each $x$ we have $\sum_{T\in\F^m(l)}\Pi_Tf(x)=\Pi_Pf(x)$, where  $P$ is the minimal bitile in $\cup_{T\in\F^m(l)}T$ such that $x\in I_P$. Second, by \eqref{e5}, for  each interval $J\in \J_k^m$ and each $P\in \cup_{T\in\F^m(l)}T$, $J$ can not contain either the right or the left halves of $I_P$. Third, \eqref{e54757897985960869078089=-} guarantees that each $|\Pi_Pf|$ is constant on both the right and the left halves of $I_P$. We now conclude that
the function $|\sum_{T\in\F^m(l)}\Pi_Tf|$ is constant on each $J\in \J_k^m$. Applying Lemma \ref{lemaortho} to $(\sum_{T\in\F^m(l)}\Pi_Tf)1_{E_k^m}$ and $\Pi_{\F^{m'}}f$, by virtue of \eqref{e4} we get
$$|\langle \Pi_{\F^m}f, \Pi_{\F^{m'}}f\rangle|\le \sum_{l}|\langle \Pi_{\F^m(l)}f, \Pi_{\F^{m'}}f\rangle|$$
$$\lesssim k2^{k}e^{-10(m'-m-1)k/2}\|\Pi_{\F^{m'}}f\|_2\|\Pi_{\F^{m}}f\|_2\lesssim e^{-(m'-m-2)k/2}\|f\|_2^2.$$

\end{proof}

\begin{lemma}[Schur's test]Let $T_m$ be a sequence of operators on $L^2([0,1])$ with adjoints $T_m^*$ such that for each $m,m'$
$$\|T_{m'}T_m^*\|_{2\mapsto 2}\le c_{|m'-m|}$$
where
$$\sum_{n\ge 0}c_n=c<\infty.$$
Then
$$\sum_m\|T_mf\|_2^2\le c\|f\|_2^2.$$
\end{lemma}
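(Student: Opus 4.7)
The plan is to run the standard $TT^*$ (duality) argument, which is the natural framework for a Schur-type test of this form. Viewing $S:L^2([0,1])\to \ell^2(L^2([0,1]))$ defined by $Sf=(T_m f)_m$, the desired inequality $\sum_m\|T_m f\|_2^2\le c\|f\|_2^2$ is equivalent to $\|SS^*\|\le c$, and the operator $SS^*$ is the block matrix $(T_{m'}T_m^*)_{m,m'}$ whose entries are precisely what the hypothesis controls. So the heart of the proof will be a Schur-type summation over the pair of indices.

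First I would use duality in $\ell^2(L^2)$ to write
$$\Bigl(\sum_m\|T_m f\|_2^2\Bigr)^{1/2}=\sup\Bigl|\sum_m\langle T_m f,g_m\rangle\Bigr|,$$
the supremum taken over tuples $(g_m)$ with $\sum_m\|g_m\|_2^2\le 1$. Moving the operators to the other side and applying Cauchy--Schwarz yields
$$\Bigl|\sum_m\langle T_m f,g_m\rangle\Bigr|=\Bigl|\bigl\langle f,\sum_m T_m^* g_m\bigr\rangle\Bigr|\le \|f\|_2\,\Bigl\|\sum_m T_m^* g_m\Bigr\|_2,$$
so the task reduces to bounding $\|\sum_m T_m^* g_m\|_2^2$ by a constant multiple of $c$.

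Second, I would expand the square and apply the hypothesis term by term:
\begin{align*}
\Bigl\|\sum_m T_m^* g_m\Bigr\|_2^2 &=\sum_{m,m'}\langle T_{m'}T_m^* g_m,g_{m'}\rangle\\
&\le \sum_{m,m'}c_{|m'-m|}\|g_m\|_2\|g_{m'}\|_2.
\end{align*}
Using the elementary inequality $\|g_m\|_2\|g_{m'}\|_2\le \tfrac12(\|g_m\|_2^2+\|g_{m'}\|_2^2)$ and summing in one variable---equivalently, invoking Young's convolution inequality $\ell^1\ast\ell^2\hookrightarrow\ell^2$ applied to the sequence $(c_{|n|})$---produces
$$\sum_{m,m'}c_{|m'-m|}\|g_m\|_2\|g_{m'}\|_2\le \Bigl(c_0+2\sum_{n\ge 1}c_n\Bigr)\sum_m\|g_m\|_2^2\le 2c.$$
Chaining the two factors gives $\sum_m\|T_m f\|_2^2\lesssim c\|f\|_2^2$, as desired.

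There is no genuine obstacle here: this is a textbook Schur / Cotlar--Stein computation, and both steps are mechanical once the $TT^*$ framework is set up. The only minor point of care is the constant, which comes out as $2c$ from the symmetric indexing over $\Z$; this is harmless for the intended application, where the decay $c_n\sim e^{-nk/2}$ supplied by Proposition \ref{proportho} is geometric and only summability matters.
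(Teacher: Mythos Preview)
Your argument is correct and is essentially the same $TT^*$/Schur computation the paper runs; the only cosmetic difference is that the paper avoids introducing the dual sequence $(g_m)$ by writing $\sum_m\|T_mf\|_2^2=\langle \sum_m T_m^*T_m f,f\rangle\le \|\sum_m T_m^*T_m f\|_2\|f\|_2$, then expanding the square and bounding it by $c\sum_m\|T_mf\|_2^2$ to obtain a self-referential inequality that bootstraps to the conclusion. Both routes hinge on the same expansion $\langle T_{m'}T_m^*\,\cdot\,,\cdot\,\rangle$ and the same Schur summation, and both produce a constant of order $2c$, which as you note is irrelevant for the application.
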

\begin{proof}
We have
$$\sum_m\|T_mf\|_2^2=\sum_m\langle T_{m'}^*T_m f, f\rangle\le \|\sum_mT_m^*T_mf\|_2\|f\|_2$$
$$=(\sum_{m,m'}\langle T_m^*T_mf, T_{m'}^*T_{m'}f\rangle)^{1/2}\|f\|_2\le (\sum_{m,m'}\|T_{m'}T_m^*\|_{2\mapsto 2}\|T_mf\|_2\|T_{m'}f\|_2)^{1/2}\|f\|_2.$$
It now suffices to note that this can further be bounded via Cauchy-Schwartz by
$$c^{1/2}(\sum_m\|T_mf\|_2^2)^{1/2}\|f\|_2.$$
\end{proof}

\begin{corollary}\label{corortho}With the notation in Proposition \ref{proportho} we have for each $2\le p<\infty$
$$\sum_{m}\|\Pi_{\F^m}f\|_p^p\lesssim (k2^k)^{p-2}\|f\|_p^p.$$
\end{corollary}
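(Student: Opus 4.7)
The plan is to realize $\sum_m\|\Pi_{\F^m}f\|_p^p$ as the $p$-th power of the mixed-norm $\|(\Pi_{\F^m}f)_m\|_{\ell^p(L^p)}$, obtain endpoint bounds at $p=2$ and $p=\infty$ for the linear operator $Vf:=(\Pi_{\F^m}f)_m$, and then interpolate. This parallels the vector-valued interpolation already used in Proposition \ref{forestesftttttimate}.

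For the endpoint $p=2$, I would apply the Schur test to the self-adjoint projections $T_m:=\Pi_{\F^m}$. Since each $T_m$ is an orthogonal projection we have $\|T_{m'}T_m^*\|_{2\to 2}\le 1$ trivially whenever $|m'-m|\le 1$. For $|m'-m|\ge 2$, Proposition \ref{proportho} applies; note that its proof works verbatim with $g$ in place of the second $f$, because the only substantive step is Lemma \ref{lemaortho}, which is already stated bilinearly. Thus
$$\|T_{m'}T_m^*\|_{2\to 2}=\sup_{\|f\|_2,\|g\|_2\le 1}|\langle \Pi_{\F^m}f,\Pi_{\F^{m'}}g\rangle|\lesssim e^{-(|m'-m|-2)k/2}.$$
Setting $c_n:=1$ for $n\le 1$ and $c_n:=e^{-(n-2)k/2}$ for $n\ge 2$, the tail is a geometric series summing to $O(1)$ (for $k\ge 1$), so Schur yields $\sum_m\|\Pi_{\F^m}f\|_2^2\lesssim \|f\|_2^2$, i.e.\ $\|V\|_{L^2\to\ell^2(L^2)}\lesssim 1$.

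For the endpoint $p=\infty$, I would exploit that each $\Pi_Tf$ is spatially supported on $I_T$, so at every $x$ only $N_{\F^m}(x)$ trees contribute. Combining the pointwise bound $|\Pi_Tf(x)|\lesssim Mf(x)$ (established inside the proof of Proposition \ref{hfuyhrufywiei3u884u8}) with \eqref{e77} gives
$$|\Pi_{\F^m}f(x)|\le \sum_{T\in\F^m:\,x\in I_T}|\Pi_Tf(x)|\lesssim k2^k\,Mf(x)\lesssim k2^k\|f\|_\infty,$$
uniformly in $m$, so $\|V\|_{L^\infty\to\ell^\infty(L^\infty)}\lesssim k2^k$.

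The final step is complex interpolation of $V$ on the mixed-norm spaces, using $[\ell^2(L^2),\ell^\infty(L^\infty)]_\theta=\ell^p(L^p)$ with $\theta=1-2/p$, exactly as in the proof of \eqref{elpbencanz}. This gives $\|V\|_{L^p\to\ell^p(L^p)}\lesssim 1^{2/p}(k2^k)^{1-2/p}=(k2^k)^{(p-2)/p}$, and raising to the $p$-th power produces the claimed bound. The main conceptual point is the upgrade of Proposition \ref{proportho} from a diagonal quadratic-form estimate to a true operator-norm bound; as noted, this is essentially free because the almost orthogonality input is bilinear, and all the actual work has been done in the previous proposition.
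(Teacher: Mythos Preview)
Your proposal is correct and follows essentially the same strategy as the paper: Schur's test for the $L^2$ endpoint, the layer bound \eqref{e77} together with Proposition \ref{hfuyhrufywiei3u884u8} for the $L^\infty$ endpoint, and vector-valued interpolation to finish. The only cosmetic difference is that the paper applies Schur to the three subfamilies $T_m=\Pi_{\F^{3m+i}}$, $i\in\{0,1,2\}$, so that distinct indices are automatically at least $3$ apart and Proposition \ref{proportho} applies directly, whereas you keep the full family and dispose of $|m'-m|\le 1$ with the trivial bound $\|T_{m'}T_m^*\|\le 1$; both are equally valid.
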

\begin{proof}
To get the $p=2$ case apply Schur's test to the families of operators $T_m=\Pi_{\F^{3m+i}}$ with $i\in\{0,1,2\}$. Note that $\Pi_{\F}^*f=\overline{\Pi_\F(\bar{f})}$ for each forest $\F$.

We next get an $L^\infty$ bound. Since by \eqref{e77} each forest consists of $O(k2^k)$ layers, Proposition \ref{hfuyhrufywiei3u884u8} implies the bound $\|\Pi_{\F^m}f\|_{\infty}\lesssim k2^k\|f\|_{\infty}$. Then we use  interpolation \cite{BePa} for the vector valued operator $Vf=(\Pi_{\F^m}f)_{m\ge 1}$ like in the proof of Proposition \ref{forestesftttttimate}.
\end{proof}

\begin{lemma}\label{cororthocmnheri6t783489ry958y}With the notation in Proposition \ref{proportho} we have for each $2\le p<\infty$
$$\sum_{m}\|C_{\F^m}f\|_p^p\lesssim 2^{-k}\|f\|_p^p.$$
\end{lemma}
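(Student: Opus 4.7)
The plan is to combine the single-tree mass estimate of Proposition \ref{treeestimate} with a vector-valued bound for the family of projections $(\Pi_T)_{T\in\cup_m\F^m}$ obtained by interpolating between a trivial $L^\infty$ endpoint and an $L^2$ endpoint coming from Corollary \ref{corortho}. This closely mirrors the argument used in Proposition \ref{forestesftttttimate}, but now summed across different Fefferman forests rather than within a single one.

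First, since each $\F^m$ is a Fefferman forest, the functions $(C_Tf)_{T\in\F^m}$ are pairwise disjointly supported, giving
\bas
\|C_{\F^m}f\|_p^p=\sum_{T\in\F^m}\|C_Tf\|_p^p.
\eas
Second, \eqref{e16} yields $C_Tf=C_T(\Pi_Tf)$, so Proposition \ref{treeestimate} applied to $\Pi_Tf$ gives $\|C_Tf\|_p^p\lesssim \mass(T)\|\Pi_Tf\|_p^p$. Since each $T\in\F^m$ is contained in $\P_k$ and \eqref{e7} forces $\mass(\P_k)\le 2^{1-k}$, I can absorb the mass into the factor $2^{-k}$ and reduce the statement to
\bas
\sum_{T\in\bigcup_m\F^m}\|\Pi_Tf\|_p^p\lesssim\|f\|_p^p.
\eas

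For this, I would consider the vector-valued operator $Wf=(\Pi_Tf)_{T\in\cup_m\F^m}$. The $L^\infty\to\ell^\infty(L^\infty)$ bound is immediate from Proposition \ref{hfuyhrufywiei3u884u8} since $\|\Pi_Tf\|_\infty\lesssim \|Mf\|_\infty\lesssim\|f\|_\infty$ uniformly in $T$. For the $L^2\to\ell^2(L^2)$ bound, the key observation is twofold: inside each Fefferman forest the trees have pairwise disjoint tiles so, by \eqref{e11}, the projections $(\Pi_Tf)_{T\in\F^m}$ are pairwise orthogonal and hence
\bas
\sum_{T\in\F^m}\|\Pi_Tf\|_2^2=\|\Pi_{\F^m}f\|_2^2;
\eas
then summing over $m$ and invoking the $p=2$ case of Corollary \ref{corortho} (which rests on Proposition \ref{proportho} via Schur's test) gives $\sum_T\|\Pi_Tf\|_2^2\lesssim\|f\|_2^2$. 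Benedek--Panzone vector-valued interpolation \cite{BePa} between these two endpoints then yields $\sum_T\|\Pi_Tf\|_p^p\lesssim\|f\|_p^p$ for all $2\le p<\infty$, which together with the preceding reductions delivers the claimed bound.

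The main subtlety I expect is keeping track of constants that do not depend on $k$: the $L^\infty$ endpoint for $W$ must be uniform in the forest family, not like the $\|N_{\F^m}\|_\infty\lesssim k2^k$ loss that appears in Corollary \ref{corortho}. This works here because we interpolate at the level of individual trees rather than whole forests --- each $\Pi_T$ is bounded by the maximal function independently of how many layers the surrounding forest has, so the loss $(k2^k)^{p-2}$ that plagued Corollary \ref{corortho} does not reappear. All other steps are essentially routine bookkeeping building on Proposition \ref{treeestimate} and the disjoint-support feature of Fefferman forests.
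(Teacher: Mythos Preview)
Your proposal is correct and follows essentially the same route as the paper's own proof: disjoint support in a Fefferman forest, the tree estimate of Proposition~\ref{treeestimate} (applied to $\Pi_Tf$ via \eqref{e16}) together with the mass bound $2^{1-k}$, and then Benedek--Panzone interpolation for the vector-valued operator $(\Pi_Tf)_{T\in\cup_m\F^m}$ between the $L^2$ endpoint from Corollary~\ref{corortho} and the uniform single-tree $L^\infty$ endpoint from Proposition~\ref{hfuyhrufywiei3u884u8}. Your remark that interpolating at the level of individual trees (rather than whole forests) avoids the $(k2^k)^{p-2}$ loss is exactly the point.
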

\begin{proof}
Using that $\F^m$ is a Fefferman forest and the tree estimate Proposition \ref{treeestimate} we get
$$\|C_{\F^m}f\|_p^p=\sum_{T\in \F^m}\|C_{T}f\|_p^p\lesssim 2^{-k}\sum_{T\in \F^m}\|\Pi_{T}f\|_p^p$$
Consider the vector valued operator $Vf=(\Pi_Tf)_{T\in\cup_m\F^m}$. Note that
$$\|Vf\|_{L^2(l^2)}=(\sum_{m}\|\Pi_{\F^m}f\|_2^2)^{1/2}\lesssim \|f\|_2,$$
by Corollary \ref{corortho}, and also that
$$\|Vf\|_{L^\infty(l^\infty)}=\sup_{T\in\cup_m\F^m}\|\Pi_{T}f\|_\infty\lesssim \|f\|_\infty,$$
by virtue of Proposition \ref{hfuyhrufywiei3u884u8}. Interpolation \cite{BePa} finishes the proof.
\end{proof}

The proof of \eqref{fjrgh845788-0990jgikbjg} when $p\ge 2$ will now immediately follow from the next estimate, by invoking the triangle inequality.

\begin{proposition}Let $2\le p<\infty$.
For each $f\in L^p([0,1])$ we have
$$\|C_{\P_k}f\|_p\lesssim 2^{-Lk}\|f\|_p,$$
for some $L=L(p)>0$ independent of $k$.
\end{proposition}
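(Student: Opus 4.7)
The plan is to leverage the decomposition $\P_k=\bigcup_{m\ge 1}\Q_k^m$ with $\Q_k^m=\bigcup_{n\lesssim k}\F_k^{m,n}$ from Lemma \ref{l1}, so that $C_{\P_k}f=\sum_{n=1}^{O(k)}\sum_m C_{\F_k^{m,n}}f$. The outer sum over $n$ has only $O(k)$ terms and is absorbed by the triangle inequality at polynomial cost. The substantive task is, for each fixed $n$, to prove
\[\Bigl\|\sum_m C_{\F^m}f\Bigr\|_p \lesssim 2^{-\alpha k}\|f\|_p\]
for some $\alpha=\alpha(p)>0$, writing $\F^m:=\F_k^{m,n}$. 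Lemma \ref{cororthocmnheri6t783489ry958y} supplies $\sum_m\|C_{\F^m}f\|_p^p \lesssim 2^{-k}\|f\|_p^p$; the difficulty is upgrading this $\ell^p$-sum bound into a bound on the $L^p$-norm of the sum, since $\supp(C_{\F^m}f)\subset E_k^{m-1}$ are only nested, not disjoint.

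For $p=2$ I adapt the Schur argument behind Proposition \ref{proportho} to the Carleson operators. Expanding $\|\sum_m C_{\F^m}f\|_2^2=\sum_{m,m'}\langle C_{\F^m}f,C_{\F^{m'}}f\rangle$, the terms with $|m'-m|\le 1$ are controlled by $3\sum_m\|C_{\F^m}f\|_2^2\lesssim 2^{-k}\|f\|_2^2$ (Lemma \ref{cororthocmnheri6t783489ry958y}). For $m'\ge m+2$, since $C_{\F^{m'}}f$ is supported in $E_k^{m'-1}$ and iterating \eqref{e4}--\eqref{e6} yields $|I_P\cap E_k^{m'-1}|\le e^{-10k(m'-m-1)}|I_P|$ for every $P\in\F^m$, the same calculation as in Proposition \ref{proportho} shows
\[\|C_{\F^m}f\cdot 1_{E_k^{m'-1}}\|_2\lesssim e^{-5k(m'-m-1)}\|\Pi_{\F^m}f\|_2,\]
so $|\langle C_{\F^m}f,C_{\F^{m'}}f\rangle|\lesssim e^{-5k(m'-m-1)}\|\Pi_{\F^m}f\|_2\|C_{\F^{m'}}f\|_2$. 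Applying Young's convolution inequality with the $\ell^1$ kernel $(e^{-5k(l-1)})_{l\ge 2}$ and Cauchy--Schwarz in $m'$, and using $\sum_m\|\Pi_{\F^m}f\|_2^2\lesssim\|f\|_2^2$ (Corollary \ref{corortho} at $p=2$) and $\sum_{m'}\|C_{\F^{m'}}f\|_2^2\lesssim 2^{-k}\|f\|_2^2$, the off-diagonal sum is $\lesssim 2^{-k/2}e^{-5k}\|f\|_2^2$, negligible compared to the diagonal. Thus $\|\sum_m C_{\F^m}f\|_2\lesssim 2^{-k/2}\|f\|_2$, and summing over $n$ yields $\|C_{\P_k}f\|_2\lesssim k\cdot 2^{-k/2}\|f\|_2\lesssim 2^{-k/3}\|f\|_2$ for $k$ large.

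For general $p>2$ I interpolate the strong $L^2\to L^2$ bound against a weaker $L^{p_0}\to L^{p_0}$ bound for some $p_0>p$ with at most polynomial-in-$k$ loss. Such a weaker bound is obtained via vector-valued interpolation \cite{BePa} applied to the operator $V f=(C_{\F_k^{m,n}}f)_{m,n}$: the $p=2$ Schur argument yields $\|V\|_{L^2\to L^2(\ell^2)}\lesssim 2^{-k/2}$, while Proposition \ref{forestesftttttimate} together with Corollary \ref{corortho} gives $\|V\|_{L^{p_0}\to L^{p_0}(\ell^{p_0})}\lesssim k^{O(1)}$ for any $p_0<\infty$. Combining these via Benedek--Panzone interpolation, and then reconstructing $\|\sum_{m,n}C_{\F_k^{m,n}}f\|_p$ from $\|Vf\|_{L^p(\ell^p)}$ by exploiting the exponential support decay $|E_k^m|\le e^{-10km}$, produces $\|C_{\P_k}f\|_p\lesssim 2^{-L(p)k}\|f\|_p$ for some $L(p)>0$.

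The main obstacle lies in this last step for $p>2$: the nested (not disjoint) support structure of the $C_{\F_k^{m,n}}f$ means that the natural $L^p(\ell^p)$ bound on $V$ does not translate directly into an $L^p$ bound on $\sum_{m,n}C_{\F_k^{m,n}}f$, and the interpolation must be engineered so that the rapid decay $|E_k^m|\le e^{-10km}$ compensates for the loss incurred when passing from sum-of-$L^p$-norms to $L^p$-of-sum. In particular, the $L^{p_0}$ endpoint must not overpower the exponential $L^2$ gain, which is a delicate balance controlled by the layer structure \eqref{e4}--\eqref{e6}.
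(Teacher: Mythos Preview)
Your $p=2$ argument is correct and coincides with the $p=2$ instance of the paper's proof: expanding $\|\sum_m C_{\F^m}f\|_2^2$ bilinearly, handling near-diagonal terms by Lemma~\ref{cororthocmnheri6t783489ry958y}, and controlling $\langle C_{\F^m}f,C_{\F^{m'}}f\rangle$ for $m'\ge m+2$ by the support shrinkage \eqref{e4}--\eqref{e6} combined with the local constancy bound \eqref{jvgirtg782r]o3olfkjtiohj}.

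For $p>2$, however, your interpolation strategy has a genuine gap, which you yourself flag in the last paragraph. Interpolating the \emph{vector-valued} operator $Vf=(C_{\F^m}f)_m$ only delivers $\|Vf\|_{L^p(\ell^p)}=(\sum_m\|C_{\F^m}f\|_p^p)^{1/p}$, and there is no mechanism to upgrade this to $\|\sum_m C_{\F^m}f\|_p$: the supports $E_k^{m-1}$ are nested, so at a point $x$ deep in the nest arbitrarily many terms contribute, and the global decay $|E_k^m|\le e^{-10km}$ does not by itself convert an $\ell^p$ bound into an $\ell^1$ bound on the norms. What you would actually need is a scalar $L^{p_0}\to L^{p_0}$ bound on $C_{\P_k}$ itself with at most polynomial loss in $k$, but this is precisely the statement you are trying to prove (at a different exponent), so the argument is circular.

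The paper breaks this circularity by a direct device you did not use: take $p$ to be an \emph{integer} and expand
\[
\Big\|\sum_m C_{\F^m}f\Big\|_p^p \le \int\Big(\sum_m|C_{\F^m}f|\Big)^p=\sum_{m_1,\dots,m_p}\int\prod_{i=1}^p|C_{\F^{m_i}}f|.
\]
Tuples with $m_p-m_1\le p^2$ are, after H\"older, controlled by $\sum_m\|C_{\F^m}f\|_p^p$ (Lemma~\ref{cororthocmnheri6t783489ry958y}). For the off-diagonal tuples $m_p>m_1+p^2$, one restricts to a tree $T\in\F^{m_1}$, uses that $\operatorname{supp}C_{\F^{m_p}}f\subset E_k^{m_p-1}$ together with \eqref{e9} and \eqref{jvgirtg782r]o3olfkjtiohj} on each $J\in\J_T$ to extract the factor $e^{-(m_p-m_1-1)k/p}$, and then exploits the disjointness of the sets $S_T$ (available because $\F^{m_1}$ is a Fefferman forest) to sum over $T$ and over $m_2,\dots,m_p$. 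This is the $p$-linear analogue of your bilinear $p=2$ computation; the integer-$p$ assumption is harmless since one then interpolates between consecutive integers.
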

\begin{proof}Fix $k$. We can assume $p$ is an integer and then use interpolation.
The value of $L$ will change throughout the argument.
For each $m\ge 0$, let $\F^m$ denote one of the forests $\F^{m,n}_k$ from Lemma \ref{l1}.
By the triangle inequality, it will suffice to prove that
$$\sum_{m}\|C_{\F^m}f\|_p^p+\sum_{m_1\le m_2\le\ldots\le m_p\atop{p^2+m_1<m_p}}\int|C_{\F^{m_1}}f|\prod_{i=2}^p|C_{\F^{m_i}}f|\lesssim 2^{-Lk}\|f\|_p^p.$$
Note that the first term is taken care of by Lemma \ref{cororthocmnheri6t783489ry958y}.
We next focus on the second term. The restriction $p^2+m_1<m_p$ in the summation can be achieved by splitting the integers in classes of residues modulo $p^2$ and using the triangle inequality. This separation will be used to achieve extra decay.

For each $T\in\F^{m_1}$ let as before $\J_T$  be the collection of all maximal dyadic intervals $J\subset I_T$ such that $J$ contains no $I_P$ with $P\in T$. Denote by $S_T$ the support $$\bigcup_{P\in T}(I_P\cap N^{-1}(\omega_P))$$ of $C_{T}f$.
Recall that the sets $S_T$ are pairwise disjoint.

 Since for each  $P\in T$ we have $I_P\subset E_k^{m_1-1}$, \eqref{e4}-\eqref{e6} will imply that
$$|I_P\cap (\operatorname{supp}C_{\F^{m_p}}f)|\le e^{-(m_p-m_1-1)k}|I_P|.$$
Since the dyadic parent of $J$ must equal $I_P$ for some $P\in T$, we conclude that also
\begin{equation}
\label{e9}
|J\cap (\operatorname{supp}C_{\F^{m_p}}f)|\le e^{-(m_p-m_1-1)k}|J|.
\end{equation}
Thus
$$\sum_{m_1\le m_2\le\ldots\le m_p\atop{m_1+p^2<m_p}}\int|C_{\F^{m_1}}f|\prod_{i=2}^p|C_{\F^{m_i}}f|\le \sum_{m_1\le m_2\le\ldots\le m_p\atop{m_1+p^2<m_p}}\sum_{T\in \F^{m_1}}\sum_{J\in\J_T}\int_J|C_{T}f|\prod_{i=2}^p|C_{\F^{m_i}}f|=$$
$$\sum_{m_1\le m_2\le\ldots\le m_p\atop{m_1+p^2<m_p}}\sum_{T\in \F^{m_1}}\sum_{J\in\J_T}\int_{J\cap (\operatorname{supp}C_{\F^{m_p}}f)}|C_{T}f|\prod_{i=2}^p|1_{S_T}C_{\F^{m_i}}f|$$

By invoking \eqref{jvgirtg782r]o3olfkjtiohj}, \eqref{e9} Lemma \ref{lemasingtree1} and H\"older's inequality this can further be bounded by
$$\sum_{m_1\le m_2\le\ldots\le m_p\atop{m_1+p^2<m_p}}\sum_{T\in \F^{m_1}}\sum_{J\in\J_T}|J\cap (\operatorname{supp}C_{\F^{m_p}}f)|^{1/p}\inf_{x\in J}M(V_Tf)(x)\prod_{i=2}^p(\int_J|1_{S_T}C_{\F^{m_i}}f|^p)^{1/p}\le$$
$$\sum_{m_1\le m_2\le\ldots\le m_p\atop{m_1+p^2<m_p}}e^{-(m_p-m_1-1)k/p}\sum_{T\in \F^{m_1}}\sum_{J\in\J_T}(\int_J[M(V_Tf)]^p)^{1/p}\prod_{i=2}^p(\int_J|1_{S_T}C_{\F^{m_i}}f|^p)^{1/p}\lesssim$$
$$\sum_{m_1\le m_2\le\ldots\le m_p\atop{m_1+p^2<m_p}}e^{-(m_p-m_1-1)k/p}\sum_{T\in \F^{m_1}}\|\Pi_Tf\|_p\prod_{i=2}^p(\int|1_{S_T}C_{\F^{m_i}}f|^p)^{1/p}\le$$
$$\sum_{m_1\le m_2\le\ldots\le m_p\atop{m_1+p^2<m_p}}e^{-(m_p-m_1-1)k/p}(\sum_{T\in\F^{m_1}}\|\Pi_{T}f\|_p^p)^{1/p}\prod_{i=2}^p\|C_{\F^{m_i}}f\|_p.$$
In the last line we have used the orthogonality of $\Pi_Tf$ and the disjointness of $S_T$.

Using \eqref{elpbencanz}, the forest estimate in Proposition \ref{forestesftttttimate}, H\"older's inequality  and the fact that
$$C_{\F^{m}}f=C_{\F^{m}}(\Pi_{\F^{m}}f),$$
we upgrade the last estimate to
$$\sum_{m_1\le m_2\le\ldots\le m_p\atop{m_1+p^2<m_p}}\int|C_{\F^{m_1}}f|\prod_{i=2}^p|C_{\F^{m_i}}f|\lesssim 2^{-\frac{k(p-1)}{p}}\sum_{m_1\le m_2\le\ldots\le m_p\atop{m_1+p^2<m_p}}e^{-(m_p-m_1-1)k/p}\prod_{i=1}^p\|\Pi_{\F^{m_i}}f\|_p$$$$\lesssim e^{-kp}\sum_m\|\Pi_{\F^m}f\|_p^p.$$
The Proposition will now follow from Corollary \ref{corortho}.
\end{proof}

\section{A proof without any appeal to the choice function}
\label{nochoice}

Let $\psi,\phi$ be Schwartz functions supported on $\T=[-1/2,1/2)$ such that $\int \psi=1$, $\int \phi=0$. Define the kernel
$$K(t,s)=\sum_{k<0}\psi_k(t)\phi_k(s)$$
where $\psi_k(t)=2^{-k}\psi(t2^{-k})$ and $\phi_k(s)=2^{-k}\phi(s2^{-k})$. Consider the trilinear form
$$\Lambda(F_1,F_2,F_3)=\int_{\T^4}F_1(x+t,y+s)F_2(x+s,y)F_3(x,y)K(t,s)dxdydtds,$$
where $+$ denotes the summation modulo one  on the torus. This is an example of a dualized two dimensional bilinear Hilbert transform.

Motivated in part by questions from Ergodic Theory, in \cite{DT} it is proved that
\begin{equation}
\label{fnreifodou4tiou4igtio}
|\Lambda(F_1,F_2,F_3)|\lesssim \|F_1\|_{p_1}\|F_2\|_{p_2}\|F_3\|_{p_3},
\end{equation}
whenever $1/{p_1}+1/{p_2}+1/{p_3}=1$ and $2<p_i<\infty$.

Interestingly, this implies the $L^p,p>2$ boundedness of the Carleson operator defined as
$$Cf(x)=\sup_{N\in\N}|\int_{\T}f(x+s)e^{i  Ns}\frac{ds}{s}|,$$
and in particular, the almost everywhere convergence of the Fourier series $S_nf(x)$ in the $p>2$ regime.
To see this, it suffices  to apply \eqref{fnreifodou4tiou4igtio} to $F_1(x,y)=f(y)$, $F_2(x,y)=e^{ixN(y)}g(y)$, $F_3(x,y)=e^{-ixN(y)}h(y)$ with $\|g\|_{p_2}=\|h\|_{p_3}=1$ and to an appropriate $\phi$ such that $\sum_{k<0}\phi_k(s)=1/s$ for $s\in [-1/4,1/4]\setminus\{0\}$.

We will prove the Walsh model analogue of \eqref{fnreifodou4tiou4igtio} for the dyadic kernel
$$K^W(t,s)=\sum_{k\ge 0}2^{\frac{3k}2}1_{I_k}(t)h_{I_k}(s),$$
where $I_k=[0,2^{-k}]$ and $h_{I_k}$ is the $L^2$ normalized Haar function. Define now
$$\Lambda^{W}(F_1,F_2,F_3)=\int_{[0,1]^4}F_1(x\oplus t,y\oplus s)F_2(x\oplus s,y)F_3(x,y)K^W(t,s)dxdydtds.$$
\begin{theorem}
\label{w,.qsklwqoid9348r94}
$$|\Lambda^{W}(F_1,F_2,F_3)|\lesssim \|F_1\|_{p_1}\|F_2\|_{p_2}\|F_3\|_{p_3},$$
whenever $1/{p_1}+1/{p_2}+1/{p_3}=1$ and $2<p_i<\infty$.

\end{theorem}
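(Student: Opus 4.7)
My plan is to discretize $\Lambda^W$ into a model sum indexed by 2D bitiles and then estimate it using the 2D analog of the size/mass/tree/forest machinery developed in the preceding sections.

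First, discretize the kernel. Setting $P_k := I_k \times [0, 2|I_k|^{-1}]$, the identities $1_{I_k}(t) = |I_k|^{1/2} W_{(P_k)_l}(t)$ and $h_{I_k}(s) = W_{(P_k)_u}(s)$ give
\[
K^W(t,s) \;=\; \sum_{k \le 0} c_k\, W_{(P_k)_l}(t)\, W_{(P_k)_u}(s)
\]
for explicit constants $c_k$. Expand each $F_i$ in the tensor-product Walsh basis $W_p \otimes W_q$, substitute into $\Lambda^W$, and integrate in $x,y,t,s$. The $\oplus$-shift formulas for Walsh wave packets and the orthogonality \eqref{e11} force the frequency coordinates of the three factors and of the kernel bitile to satisfy a specific 2D resonance condition. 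This reduces $\Lambda^W$ to a discrete model sum
\[
\Lambda^W(F_1,F_2,F_3) \;=\; \sum_{\mathbf{P}} \frac{1}{|I_{\mathbf{P}}|^{1/2}}\, \langle F_1,\Phi_1(\mathbf{P})\rangle\, \langle F_2,\Phi_2(\mathbf{P})\rangle\, \langle F_3,\Phi_3(\mathbf{P})\rangle,
\]
where $\mathbf{P}$ runs over a suitable collection of 2D bitiles in the 4D phase plane and each $\Phi_i(\mathbf{P})$ is a 2D Walsh wave packet attached to $\mathbf{P}$.

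Next, I transplant the 2D tools. As outlined in Remark \ref{rem1}, the notions of bitile, convexity, tree, forest, phase space projection, $\size_F$, and $\mass$ all extend naturally to the 2D Walsh plane, and the proofs of Lemma \ref{furfy58t78t585igutgutih}, Proposition \ref{hfuyhrufywiei3u884u8}, Proposition \ref{treeestimate}, and the tree estimate Proposition \ref{sceruyfr45f832p} pass through slice by slice in each coordinate direction. I expect a single-tree estimate of the form
\[
|\Lambda^W_T(F_1,F_2,F_3)| \;\lesssim\; |I_T|\, \prod_{i=1}^{3}\size_{F_i}(T)^{a_i},
\]
with positive exponents $a_1+a_2+a_3 = 1$, their specific values dictated by the overlapping-vs-lacunary classification of $T$ in each coordinate.

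To conclude, I iterate the size decomposition (Proposition \ref{2erbyu76i87o}) separately in each of $F_1,F_2,F_3$ to obtain a partition of the bitile collection into convex forests with all three sizes controlled by $2^{-n_i}$ and with counting-function estimates $\sum_{T\in\F_{n_i}}|I_T| \lesssim 2^{2n_i}\|F_i\|_2^2$. Plugging into the single-tree estimate and summing the triple geometric series, then using vector-valued interpolation between $L^2$ and $L^\infty$ bounds as in Section \ref{victor}, yields the desired estimate throughout $2 < p_i < \infty$. The main obstacle is the two-dimensional combinatorics. As the section's title emphasizes, the discretization and decomposition must be carried out without linearizing through any choice function $N(x)$: the Carleson modulation is encoded implicitly through the triple of shifts $x\oplus t$, $x\oplus s$, $y\oplus s$, and identifying which of the three wave packets carries this hidden modulation, and hence which function plays the Carleson role, is the technical heart of the argument. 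The restriction $p_i > 2$ emerges naturally from this analysis, through an unavoidable $L^2$ orthogonality step in one of the three directions.
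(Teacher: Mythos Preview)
Your proposal has a genuine numerical gap that would block the argument. You posit a single-tree estimate
\[
|\Lambda^W_T(F_1,F_2,F_3)| \;\lesssim\; |I_T|\prod_{i=1}^3\size_{F_i}(T)^{a_i},\qquad a_1+a_2+a_3=1,
\]
and then plan to sum over the triple size decomposition using $\sum_{T\in\F_{n_i}}|I_T|\lesssim 2^{2n_i}\|F_i\|_2^2$. But convergence of the resulting triple geometric series requires $a_i>2/p_i$ for each $i$, and summing these gives $a_1+a_2+a_3>2(1/p_1+1/p_2+1/p_3)=2$, contradicting $a_1+a_2+a_3=1$. A scaling-consistent, fully symmetric tree estimate simply cannot close in the range $p_i>2$.

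The paper's resolution is an asymmetry you have not anticipated. The form $\Lambda^W$ does \emph{not} discretize into full two-dimensional wave packets for all three functions: only $F_1$ is localized in both frequency coordinates, while $F_2$ and $F_3$ are localized only in the first. The right objects are ``$\tfrac32$-bitiles'' $P=R_P\times\omega_P$ (a dyadic square times a single dyadic interval), and two distinct phase-space projections $\Pi^2_T$ and $\Pi^{3/2}_T$, with corresponding sizes $\size_{F,2}$ and $\size_{F,3/2}$. Crucially, $\Pi^{3/2}_P$ is a genuine contraction on $L^\infty$, so under restricted type ($|F_i|\le 1_{E_i}$) one gets $\|\Pi^{3/2}_TF_i\|_p\lesssim |R_T|^{1/p}\size_{F_i,3/2}(T)^{2/p}$ for $p\ge 2$. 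Combining this with the Hölder-type tree bound \eqref{e28} yields effective size exponents $(1,\alpha,\alpha)$ with any $\alpha<1$; choosing $\alpha>\max(2/p_2,2/p_3)$ makes the triple sum converge. No mass and no choice function enter: the argument is purely size-based with restricted type interpolation, not the vector-valued interpolation from Section~\ref{victor}.
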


The definition of $\Lambda(F_1,F_2,F_3)$ involves no choice function $N(x)$, so we recover a proof of the boundedness of the Carleson operator  which makes no mention of it! The proof of Theorem \ref{w,.qsklwqoid9348r94} is very close in spirit to the proof of the boundedness of the Walsh model of the bilinear Hilbert transform \cite{BHT}. This shows once more the deep connection between these two operators.

Define the projection operators
$$\pi_{\omega}^1F(x,y)=\F_{W}^{-1}[1_\omega(\xi)\F_W(F)(\xi,\eta)](x,y)$$
$$\pi_{\omega}^2F(x,y)=\F_{W}^{-1}[1_\omega(\eta)\F_W(F)(\xi,\eta)](x,y).$$
It can be easily checked that
$$\Lambda^{W}(F_1,F_2,F_3)=\int_{[0,1]^2}\sum_{\omega\in\D_+:|\omega|\ge 2}\pi^1_{[0,\frac{|\omega|}{2}]}\pi_{\omega_l}^2F_1(x,y)\pi^1_{\omega_u}F_2(x,y)\pi^1_{\omega_u}F_3(x,y)dxdy+$$
$$\int_{[0,1]^2}\sum_{\omega\in\D_+:|\omega|\ge 2}\pi^1_{[0,\frac{|\omega|}{2}]}\pi_{\omega_u}^2F_1(x,y)\pi^1_{\omega_l}F_2(x,y)\pi^1_{\omega_l}F_3(x,y)dxdy,$$
where $\omega_l,\omega_u$ denote the left and right halves of $\omega$.

By symmetry, it will suffice to estimate the first integral, which we denote by $\Lambda^{W,1}(F_1,F_2,F_3)$.
Let $\Omega$ be a collection of intervals in $\D_+$.
 Define
$$\Lambda_\Omega(F_1,F_2,F_3)=\int_{[0,1]^2}\sum_{\omega\in\Omega}\pi^1_{[0,\frac{|\omega|}{2}]}\pi_{\omega_l}^2F_1(x,y)\pi^1_{\omega_u}F_2(x,y)\pi^1_{\omega_u}F_3(x,y)dxdy.$$
The first case of interest is when the intervals in $\Omega$ are nested. This will be a precursor of the tree estimate in Lemma \ref{efjreg458t6ghu56}.
\begin{proposition}
\label{propo1}
Let $\Omega$ be a collection of intervals in $\D_+$ which contain a point $\xi$. Then
$$|\Lambda_\Omega(F_1,F_2,F_3)|\lesssim \|F_1\|_{p_1}\|F_2\|_{p_2}\|F_3\|_{p_3},$$
whenever $1/{p_1}+1/{p_2}+1/{p_3}=1$ and $1<p_i<\infty$.
\end{proposition}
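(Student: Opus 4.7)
The dyadic intervals $\omega\in\Omega$ all contain $\xi$ and are thus totally ordered by inclusion; in particular each $\omega$ satisfies either $\xi\in\omega_l$ or $\xi\in\omega_u$. The plan is to split $\Omega=\Omega_l\sqcup\Omega_u$ accordingly and to treat each piece separately. The core algebraic observation is that for any dyadic $\eta\ni\xi$ one has $\eta\oplus\xi=[0,|\eta|)$ modulo null sets; consequently, upon modulating by $w_\xi$ in the appropriate variable, every Walsh projection $\pi^2_{\omega_l}$ or $\pi^1_{\omega_u}$ reduces to a dyadic conditional expectation $E_{|\omega|/2}$ (when $\xi$ lies in the target half) or to a Walsh martingale difference $\Delta_{|\omega|/2}$ (when $\xi$ does not). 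Writing $H_1:=w_\xi(y)F_1$ and $G_i:=w_\xi(x)F_i$ and observing that the two $w_\xi(x)$ factors in the $F_2,F_3$-slots cancel, the integrand becomes $w_\xi(y)\,E^y_{|\omega|/2}H_1\cdot\Delta^x_{|\omega|/2}G_2\cdot\Delta^x_{|\omega|/2}G_3$ for $\omega\in\Omega_l$ and $w_\xi(y)\,\Delta^y_{|\omega|/2}H_1\cdot E^x_{|\omega|/2}G_2\cdot E^x_{|\omega|/2}G_3$ for $\omega\in\Omega_u$.

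The piece $\Lambda_{\Omega_l}$ is the \emph{one overlapping, two lacunary} configuration and is immediate. Since $\{\omega_u\}_{\omega\in\Omega_l}$ are pairwise disjoint, the two martingale differences $\Delta^x_{|\omega|/2}G_i$ sit on Walsh bands at distinct scales, so the square function $(\sum_\omega|\Delta^x_{|\omega|/2}G_i|^2)^{1/2}$ is dominated by the Walsh square function in $x$ and hence bounded on every $L^{p_i}$. Combined with the Doob bound $|E^y_{|\omega|/2}H_1|\le M^yF_1$, one Cauchy--Schwarz in $\omega$ on the two lacunary factors followed by triple H\"older with $1/p_1+1/p_2+1/p_3=1$ yields $|\Lambda_{\Omega_l}|\lesssim\|M^yF_1\|_{p_1}\|S^xG_2\|_{p_2}\|S^xG_3\|_{p_3}\lesssim\prod_i\|F_i\|_{p_i}$.

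The remaining piece $\Lambda_{\Omega_u}$ is the harder \emph{one lacunary, two overlapping} configuration; I would handle it by duality. Using the self-adjointness of $\Delta^y_{|\omega|/2}$ in $y$ together with the crucial fact that $\{\omega_l\}_{\omega\in\Omega_u}$ is a pairwise disjoint family, one rewrites $\Lambda_{\Omega_u}(F_1,F_2,F_3)=\int F_1\cdot V\,dx\,dy$ with $V:=w_\xi(y)\sum_{\omega\in\Omega_u}\Delta^y_{|\omega|/2}\bigl(E^x_{|\omega|/2}G_2\cdot E^x_{|\omega|/2}G_3\bigr)$, whose summands have pairwise disjoint $y$-frequency supports. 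By H\"older, Walsh Littlewood--Paley in $y$, the pointwise estimate $|\Delta^y_jh|\lesssim M^y|h|$, and the Fefferman--Stein vector-valued maximal inequality in $y$, the problem reduces to the bound $\|(\sum_{\omega\in\Omega_u}|E^x_{|\omega|/2}G_2\cdot E^x_{|\omega|/2}G_3|^2)^{1/2}\|_{L^{p_1'}(dxdy)}\lesssim\|F_2\|_{p_2}\|F_3\|_{p_3}$ with $1/p_1'=1/p_2+1/p_3$.

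The main obstacle I expect is precisely this last square-function estimate: the nested $x$-conditional expectations do not decouple through an ordinary Littlewood--Paley square function, and the naive bound $|E^x_{|\omega|/2}G_i|\le M^xF_i$ produces a divergent $|\Omega|^{1/2}$ count. The plan to close it is to telescope the nested $E^x_{|\omega|/2}G_i$ into the \emph{macroscopic} Walsh martingale differences $E^x_{|\omega|/2}G_i-E^x_{|\omega'|/2}G_i$ between consecutive $\omega'\subsetneq\omega\in\Omega_u$; these project onto pairwise disjoint $x$-frequency shells and therefore yield a genuine Walsh square function in $x$, bounded on every $L^{p_i}$. Retaining the Doob bound on the other, ``reference'' factor and balancing the two $L^{p_2}$ and $L^{p_3}$ dependencies through $1/p_1'=1/p_2+1/p_3$ via a careful H\"older argument then closes the estimate.
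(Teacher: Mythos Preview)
Your treatment of $\Omega_l$ is correct and essentially the paper's first case; the paper also bounds the $F_1$-factor by a maximal function and the $F_2,F_3$-factors by the square function associated with the disjoint intervals $\omega_u$.

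For $\Omega_u$ your argument has a genuine gap. The reduction you reach,
\[
\Bigl\|\Bigl(\sum_{\omega\in\Omega_u}\bigl|E^x_{|\omega|/2}G_2\cdot E^x_{|\omega|/2}G_3\bigr|^2\Bigr)^{1/2}\Bigr\|_{p_1'}\lesssim \|F_2\|_{p_2}\|F_3\|_{p_3},
\]
is \emph{false}, and no telescoping can rescue it. Take $F_2=F_3=w_\xi(x)1_{[0,1]^2}$; then $G_2=G_3=1$, every $E^x_{|\omega|/2}G_i\equiv1$, and the left side equals $|\Omega_u|^{1/2}$ while the right side is $1$. In your proposed telescoping the macroscopic differences $E^x_{|\omega|/2}G_i-E^x_{|\omega'|/2}G_i$ all vanish here; the divergence is carried entirely by the ``reference'' term, which still appears in every $\omega$-summand. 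The step that fails is replacing $\Delta^y_{|\omega|/2}$ by $M^y$ via Fefferman--Stein: this discards the $y$-cancellation that is the \emph{only} mechanism making the $\omega$-sum convergent.

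The paper avoids this by telescoping \emph{before} any such loss, directly inside the trilinear form. It writes $\pi^1_{\omega_u}F_i=F_i^{ext}-\sum_{\tilde\omega\not\subset\omega_u}\pi^1_{\tilde\omega}F_i$ with $\tilde\omega$ ranging over the gaps between consecutive $\omega_u$'s, expands the product into four terms, and in each term the $\omega$-sum is controlled using the intact $y$-lacunary factor $\pi^2_{\omega_l}F_1$: the main$\times$main term becomes $\int (TF_1)F_2^{ext}F_3^{ext}$ for a bounded two-dimensional singular integral $T=\sum_\omega\pi^1_{[0,|\omega|/2]}\pi^2_{\omega_l}$; the diff$\times$diff term diagonalizes (only adjacent $\tilde\omega,\tilde\omega'$ survive) and is handled with the maximal truncation $T^*$ and an $x$-square function; the two mixed terms are similar. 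The point is that the $y$-cancellation on $F_1$ is kept alongside the $x$-telescoping on $F_2,F_3$, and it is their interaction that produces a bounded operator or a diagonal sum. Your duality-then-Fefferman--Stein route severs that interaction.
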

\begin{proof}
By splitting $\Omega$ in two parts, it suffices to assume that either $\xi\in\omega_l$ for all $\omega$ or $\xi\in\omega_u$ for all $\omega$. In the first case, note that the intervals $\omega_u$ are pairwise disjoint. We estimate using H\"older's inequality and the boundedness of the square function $S$ associated with the intervals $\omega_u$ on the first component
$$|\Lambda_\Omega(F_1,F_2,F_3)|\lesssim$$
 $$\int_{[0,1]^2}\sup_\omega|\pi^1_{[0,\frac{|\omega|}{2}]}\pi_{\omega_l}^2F_1(x,y)|(\sum_{\omega\in\Omega}|\pi^1_{\omega_u}F_2(x,y)|^2)^{1/2}(\sum_{\omega\in\Omega}|\pi^1_{\omega_u}F_3(x,y)|^2)^{1/2}dxdy\le$$
 $$\|M(F_1)\|_{p_1}\|SF_2\|_{p_2}\|SF_3\|_{p_3}\lesssim \|F_1\|_{p_1}\|F_2\|_{p_2}\|F_3\|_{p_3}.$$

In the second case note that $\omega_{l}$ are pairwise disjoint. We will run a standard telescoping argument. Denote by $\tilde{\Omega}$ the collection of all set differences between intervals $\omega_{u}$ of consecutive length. Note that each $\tilde{\omega}=\omega_u'\setminus\omega_u\in\tilde{\Omega}$ is the union of at most two intervals whose length is at least $|\omega_u|$.  We call $\omega^{ext}$ the complement (in $\R_+$) of the largest interval $\omega_u$, $\omega\in\Omega$, and $F_i^{ext}=\pi^1_{\omega^{ext}}F_i$ for $i=2,3$. Note that trivially, for each $\omega\in\Omega$

$$\pi^1_{\omega_u}F_i=F_i^{ext}-\sum_{\tilde{\omega}\in \tilde{\Omega}:\;\tilde{\omega}\not\subset \omega_u}\pi^1_{\tilde{\omega}} F_i,\;\;i=2,3.$$
We can now write
$$\Lambda_\Omega(F_1,F_2,F_3)=$$
\begin{equation}
\label{e24}
\int_{[0,1]^2}\sum_{\omega\in\Omega}\pi^1_{[0,\frac{|\omega|}{2}]}\pi_{\omega_l}^2F_1(x,y)F_2^{ext}(x,y)F_3^{ext}(x,y)dxdy-
\end{equation}
\begin{equation}
\label{e25}
-\int_{[0,1]^2}\sum_{\omega\in\Omega}\sum_{\tilde{\omega}\in\tilde{\Omega}:\;\tilde{\omega}\not\subset \omega_u}\pi^1_{[0,\frac{|\omega|}{2}]}\pi_{\omega_l}^2F_1(x,y)F_2^{ext}(x,y)\pi^1_{\tilde{\omega}}F_3(x,y)dxdy-
\end{equation}
\begin{equation}
\label{e26}
-\int_{[0,1]^2}\sum_{\omega\in\Omega}\sum_{\tilde{\omega}\in\tilde{\Omega}:\;\tilde{\omega}\not\subset \omega_u}\pi^1_{[0,\frac{|\omega|}{2}]}\pi_{\omega_l}^2F_1(x,y)\pi^1_{\tilde{\omega}}F_2(x,y)F_3^{ext}(x,y)dxdy+
\end{equation}
\begin{equation}
\label{e27}
+\int_{[0,1]^2}\sum_{\omega\in\Omega}\sum_{\tilde{\omega}\in\tilde{\Omega}:\;\tilde{\omega}\not\subset \omega_u}\sum_{\tilde{\omega'}\in\tilde{\Omega}:\;\tilde{\omega'}\not\subset \omega_u}\pi^1_{[0,\frac{|\omega|}{2}]}\pi_{\omega_l}^2F_1(x,y)\pi^1_{\tilde{\omega}}F_2(x,y)\pi^1_{\tilde{\omega'}}F_3(x,y)dxdy.
\end{equation}
The term \eqref{e24} is controlled by H\"older's inequality and the boundedness of the two dimensional singular integral operator $TF=\sum_{\omega\in\Omega}\pi^1_{[0,\frac{|\omega|}{2}]}\pi_{\omega_l}^2F$. To control  \eqref{e27} we note that the term corresponding to a triple $\omega,\tilde{\omega},\tilde{\omega'}$ is nonzero only when $\tilde{\omega},\tilde{\omega'}$
 are adjacent to each other. This is simple computation involving the Walsh transform that will diagonalize the sum. Then use Cauchy-Schwartz and the boundedness of the square function $(\sum_{\tilde{\omega}\in\tilde{\Omega}}|\pi^1_{\tilde{\omega}}F|^2)^{1/2}$ and of the maximal truncation operator $$T^*F=\sup_{\delta>0}|\sum_{\omega\in\Omega:\;|\omega|>\delta}\pi^1_{[0,\frac{|\omega|}{2}]}\pi_{\omega_l}^2F|.$$

 A similar argument will take care of the terms \eqref{e25} and \eqref{e26}, the details are left to the reader.

\end{proof}

The next step is to discretize the form $\Lambda$ in both time and frequency.

\begin{definition}
A one and a half dimensional bitile (tile) $P=R_P\times \omega_P$, for short $\frac32$ - bitile (tile), is a product of  a dyadic square $R_P=I_P\times J_P\in \D_+^2$ and a dyadic interval $\omega_P\in\D_+$ with $|\omega_P|=2|I_P|^{-1}$ $(|\omega_P|=|I_P|^{-1})$. If $P$ is a $\frac32$ - bitile, we define as before the upper and lower $\frac32$ - tiles $P_u=R_P\times \omega_{P_u}$, $P_l=R_P\times \omega_{P_l}$.
\end{definition}

Call $\P_{all}^{3/2}$ the entire collection of $\frac32$ - bitiles. There is a natural partial order on $\P_{all}^{3/2}$ which, by abusing earlier notation will  be denoted by $\le$. The reader can correctly anticipate that $P\le P'$ will stand for $R_P\subset R_{P'}$ and $\omega_{P'}\subset\omega_P$. Convexity on $\P_{all}^{3/2}$ will be understood with respect to this order.
\begin{definition}
Let $R_T\subset \D_+^2$ be a dyadic square and let $\xi_T\in\R_+$. A $\frac32$ - tree $T$ with top data $(R_T,\xi_T)$ is a collection of $\frac32$ -  bitiles in $\P_{all}^{3/2}$ such that $R_P\subset R_T$ and $\xi_T\in\omega_P$ for each $P\in T$.
\end{definition}

With each $\frac32$ - bitile $P$ we will identify two regions in the phase space $\R_+^4=\R_+^2\times \widehat{\R_+^2}$. One is the region covered by the two dimensional bitile $R_P\times [0,|\omega_P|]\times \omega_P$. The other one is $R_P\times \omega_P \times \R_+ $, which is the union of two dimensional bitiles of the form $R_P\times \omega_P \times \omega$. We will denote by $\mathcal C_P$ the collection of all these two dimensional bitiles. It is easy to check that if $\P\subset \P_{all}^{3/2}$ is convex then both collections of bitiles $\{R_P\times [0,|\omega_P|]\times \omega_P:P\in\P\}$ and
$\cup_{P\in\P}\mathcal C_P$ are convex with respect to the two dimensional order. We will denote by $\Pi^2_\P F$ and $\Pi^{3/2}_\P F$ the phase space projections associated with the two collections (cf. Remark \ref{rem1})

Note that $\Pi^{3/2}_\P F$ can be thought of as being a one and a half dimensional projection, since this operator produces no localization on the second frequency component. For example, if $p$ is a $\frac32$ - tile then
$$\Pi_{p}^{3/2}F(x,y)=\left[\int F(x',y)W_{I_p\times\omega_p}(x')dx'\right]W_{I_p\times\omega_p}(x)1_{J_{p}}(y)=$$
$$=\sum_{|\omega|=|\omega_p|}\langle F,W_{I_p\times\omega_p}W_{J_p\times\omega}\rangle W_{I_p\times\omega_p}(x)W_{J_p\times\omega}(y).$$

For a convex $\frac32$ - tree $T$ define
$$\Lambda_T(F_1,F_2,F_3)=\int_{[0,1]^2}\sum_{P\in T}1_{R_P}(x,y)\pi^1_{[0,\frac{|\omega_P|}{2}]}\pi_{\omega_{P_l}}^2F_1(x,y)\pi^1_{\omega_{P_u}}F_2(x,y)\pi^1_{\omega_{P_u}}F_3(x,y)dxdy,$$
and observe that variants of \eqref{e16} will imply that
$$\Lambda_T(F_1,F_2,F_3)=\Lambda_T(\Pi_T^2(F_1),\Pi_T^{3/2}(F_2),\Pi_T^{3/2}(F_3))$$
An argument very similar to the one in Proposition \ref{propo1} will prove
$$
|\Lambda_T(F_1,F_2,F_3)|\lesssim \|F_1\|_{s_1}\|F_2\|_{s_2}\|F_3\|_{s_3},$$
whenever $1/{s_1}+1/{s_2}+1/{s_3}=1$ and $1<s_i<\infty$. Combining these we get
\begin{equation}
\label{e28}
|\Lambda_T(F_1,F_2,F_3)|\lesssim \|\Pi_T^2(F_1)\|_{s_1}\|\Pi_T^{3/2}(F_2)\|_{s_2}\|\Pi_T^{3/2}(F_3)\|_{s_3}
\end{equation}

\begin{definition}Let $\P\subset\P^{3/2}_{all}$ be a collection of $\frac32$ - bitiles. For $F:\R_+^2\to \C$ define
$$\size_{F,2}(\P)=\sup_{P\in\P}\frac{\|\Pi_P^2F\|_2}{|R_P|^{1/2}}$$
$$\size_{F,3/2}(\P)=\sup_{P\in\P}\frac{\|\Pi_P^{3/2}F\|_2}{|R_P|^{1/2}}.$$
\end{definition}
\begin{lemma}[Tree estimate]
\label{efjreg458t6ghu56}
Let $T\subset\P_{all}^{3/2}$ be a convex $\frac32$ - tree. Then
$$\|\Pi_T^2F\|_{p}\lesssim |R_T|^{1/p}\size_{F,2}(T),\text{  for each }1\le p\le\infty.$$
If in addition $\|F\|_{\infty}\le 1$ then
$$\|\Pi_T^{3/2}F\|_{p}\lesssim |R_T|^{1/p}(\size_{F,{3/2}}(T))^{2/p},\text{  for each }2\le p\le \infty.$$\end{lemma}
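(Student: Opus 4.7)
The first inequality is the natural two-dimensional analogue of Proposition \ref{dkjuyer8fu4589t75899rio3t}. Since $\Pi_T^2 F$ is supported in $R_T$, by H\"older it suffices to establish the endpoint bound $\|\Pi_T^2F\|_\infty\lesssim\size_{F,2}(T)$. Following Remark \ref{rem1}, the two-dimensional version of Proposition \ref{hfuyhrufywiei3u884u8} yields the pointwise identification $\Pi_T^2F(x,y)=\Pi_P^2F(x,y)$, where $P$ is the minimal $\frac{3}{2}$-bitile of $T$ containing $(x,y)$ in its spatial part $R_P$. For a single $\frac{3}{2}$-bitile, the $2$D bitile $R_P\times[0,|\omega_P|]\times\omega_P$ decomposes canonically into four orthonormal wave packets, each with uniform sup norm $|R_P|^{-1/2}$, and Cauchy--Schwarz gives $\|\Pi_P^2F\|_\infty\lesssim|R_P|^{-1/2}\|\Pi_P^2F\|_2\le\size_{F,2}(T)$, exactly as in the one-dimensional case.

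For the second inequality, the plan is to interpolate between the endpoints $p=2$ and $p=\infty$ via the elementary log-convexity $\|g\|_p\le\|g\|_\infty^{(p-2)/p}\|g\|_2^{2/p}$. It therefore reduces to establishing $\|\Pi_T^{3/2}F\|_\infty\lesssim 1$ and $\|\Pi_T^{3/2}F\|_2\lesssim |R_T|^{1/2}\size_{F,3/2}(T)$. The $L^\infty$ bound is immediate from $\|F\|_\infty\le 1$ together with the explicit formula for $\Pi_P^{3/2}F$ displayed in the excerpt and the minimal-bitile identification: writing $g_{P_*}(y):=\int F(x',y)W_{I_P\times\omega_{P_*}}(x')\,dx'$, one has $|\Pi_P^{3/2}F(x,y)|\le|I_P|^{-1/2}(|g_{P_l}(y)|+|g_{P_u}(y)|)$ and the trivial bound $|g_{P_*}(y)|\le|I_P|^{1/2}$ gives $|\Pi_P^{3/2}F(x,y)|\le 2$.

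For the $L^2$ bound I plan to use a slicing argument. For each $y\in J_T$, the one-dimensional slice $\Pi_T^{3/2}F(\cdot,y)$ coincides with the one-dimensional convex tree projection $\Pi_{T_y}^{1D}F(\cdot,y)$, where $T_y:=\{I_P\times\omega_P:P\in T,\,y\in J_P\}$ is a convex $1$D tree with top data $(I_T,\xi_T)$; convexity of $T_y$ is a straightforward consequence of the convexity of $T$ in the $\frac{3}{2}$-order. Applying Proposition \ref{dkjuyer8fu4589t75899rio3t} at $p=2$ in one dimension, $\|\Pi_{T_y}^{1D}F(\cdot,y)\|_2^2\lesssim|I_T|\,\size^{1D}_{F(\cdot,y)}(T_y)^2$. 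Integrating in $y$ and using the explicit formula $\|\Pi_{I_P\times\omega_P}^{1D}F(\cdot,y)\|_2^2=|g_{P_l}(y)|^2+|g_{P_u}(y)|^2$, the task becomes to show that $\int_{J_T}\size^{1D}_{F(\cdot,y)}(T_y)^2\,dy\lesssim|J_T|\size_{F,3/2}(T)^2$. This is where one combines the pointwise control $|g_{P_*}(y)|\le|I_P|^{1/2}$ (coming from $\|F\|_\infty\le 1$) with the integrated size bound $\int_{J_P}(|g_{P_l}|^2+|g_{P_u}|^2)\le|R_P|\size_{F,3/2}(T)^2$, through a distributional argument that at each point in $J_T$ the supremum defining $\size^{1D}_y$ is realized at some $P$ whose contribution is controlled by the size.

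The main difficulty lies in this last step: since the scales occurring in $T$ are unrestricted, a naive summation of the per-bitile bounds $\|\Pi_P^{3/2}F\|_2^2\le|R_P|\size^2$ fails, as $\sum_{P\in T}|R_P|$ diverges. The saving comes from the hypothesis $\|F\|_\infty\le 1$: it converts the pointwise $L^\infty$ in $y$ of $g_{P_*}$ into a trivial bound, which together with the averaged $L^2$ bound and the partial-order structure of $T$ (restricting attention, via the minimal-bitile identification, to those $P$ whose $R_{P,\min}\subset R_P$ actually contribute) controls the $y$-integral of $\size^{1D}_y{}^2$ by $|J_T|\size_{F,3/2}(T)^2$. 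I expect this to be the delicate combinatorial-analytic step in the proof.
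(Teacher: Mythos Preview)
Your argument for the first inequality matches the paper's.

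For the second inequality there is a gap. Your slicing reduction leads to the estimate $\int_{J_T}\size^{1D}_{F(\cdot,y)}(T_y)^2\,dy\lesssim|J_T|\,\size_{F,3/2}(T)^2$, which asks you to exchange a supremum in $P$ with an average in $y$; you do not carry this out. Invoking $\|F\|_\infty\le 1$ here is a red herring: the trivial pointwise bound $|g_{P_*}(y)|\le|I_P|^{1/2}$ only yields $\size^{1D}_{F(\cdot,y)}(T_y)^2\le 2$, which integrates to $\|\Pi_T^{3/2}F\|_2^2\lesssim|R_T|$ with no factor of $\size_{F,3/2}(T)$ gained. The ``distributional argument'' you allude to is not specified, and there is no obvious way to make it produce the required size factor.

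The ingredient you are missing is purely combinatorial and has nothing to do with $\|F\|_\infty$. Let $\P_T\subset T$ consist of those $P$ that are the minimal bitile for some point $(x,y)$. Convexity of $T$ forces, for each $P\in\P_T$, at least one of the four dyadic children of $R_P$ to contain no $R_{P'}$ with $P'\in\P_T$; this yields the volume bound $\sum_{P\in\P_T}|R_P|\lesssim|R_T|$. The paper then interpolates at the level of a \emph{single} bitile: from $\|\Pi_P^{3/2}F\|_\infty\le\|F\|_\infty\le 1$ and $\|\Pi_P^{3/2}F\|_2^2\le|R_P|\,\size_{F,3/2}(T)^2$ one gets $\|\Pi_P^{3/2}F\|_p^p\le|R_P|\,\size_{F,3/2}(T)^2$ for every $2\le p\le\infty$. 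Since the regions on which the various $P\in\P_T$ are minimal are disjoint subsets of the corresponding $R_P$, summing over $\P_T$ gives $\|\Pi_T^{3/2}F\|_p^p\le\size_{F,3/2}(T)^2\sum_{P\in\P_T}|R_P|\lesssim|R_T|\,\size_{F,3/2}(T)^2$, which is exactly the claim. In particular, at $p=2$ only the volume bound on $\P_T$ is used; the hypothesis $\|F\|_\infty\le 1$ enters solely through the single-bitile $L^\infty$ endpoint, i.e.\ only when $p>2$.
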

\begin{proof}
The crucial observation in both cases is that
\begin{equation}
\label{e30}
\Pi_T^{i}F(x,y)=\Pi_P^{i}F(x,y),
\end{equation}
for each $i\in\{\frac32,2\}$, where $P$ is minimal $\frac32$ - bitile in $T$ such that $(x,y)\in R_P$. This follows like in the proof of Proposition \ref{hfuyhrufywiei3u884u8}. The first inequality then follows by noting that $\|\Pi_P^{2}F\|_{\infty}\lesssim \|\Pi_P^{2}F\|_{2}$, like in the one dimensional case.

Let us now analyze the second inequality. Let $\P_T$ be the collection of all $\frac32$ - bitiles in $T$ such that for each $P\in \P_T$ there is $(x,y)$ such that $P$ is the minimal $\frac32$ - bitile  in $T$ with $(x,y)\in R_P$. It is easy to see that
\begin{equation}
\label{e31}
\sum_{P\in \P_T}|R_P|\lesssim |R_T|.
\end{equation}
Indeed, if $P\in\P_T$, by the convexity of $T$ at least one of the four dyadic children of $R_P$ will contain no $R_{P'}$ with $P'\in\P_T$.

Next we observe that for each $\frac32$ - bitile $P$,
\begin{equation}
\label{e29}
\|\Pi_P^{3/2}F\|_{\infty}\le \|F\|_{\infty},
\end{equation}
which together with H\"older's inequality gives, if $\|F\|_{\infty}\le 1$ and $2\le p\le \infty$
$$\|\Pi_P^{3/2}F\|_{p}\le |R_P|^{1/p}(\size_{F,3/2}(P))^{2/p}.$$
The result now follows by combining this with \eqref{e30} and \eqref{e31}.
\end{proof}

The proof of Proposition \ref{2erbyu76i87o} applies with essentially  no  modification to prove the following variant

\begin{proposition}[Size decomposition]
\label{2erbyu76i87onewdd}

Let $i$ be either $2$ or $3/2$. Let $F:\R_+^2\to \C$.
Let $\P$ be a finite convex collection of $\frac32$ - bitiles.

Then
$$
\P= \bigcup_{2^{-n}\le \size_{F,i}(\P)}\P_{n} \cup \P_{null}
,$$
such that \begin {itemize}
\item[$\cdot$]$\size_{F,i}(\P_{n} )\leq 2^{-n}$:
\item[$\cdot$] $\P_{n}$ is a convex forest with convex $\frac32$ - trees $T\in\F_n$ satisfying
$$
\sum_{T\in\F_n}  |R_T|\lesssim 2^{2n}\|F\|^2_2,
$$

\item [$\cdot$]$\Pi_P^{i}F\equiv 0$ for each $P\in\P_{null}$
\end{itemize}\end{proposition}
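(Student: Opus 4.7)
The plan is to mirror exactly the two-step strategy from the one-dimensional case: first prove a $\frac32$-analogue of Lemma \ref{sizelemma}, then iterate it to get the full size decomposition.

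For the one-step lemma I would run the same greedy selection as in Lemma \ref{sizelemma}. Initialize $\P_{stock} := \P$ and $\F := \emptyset$. At each stage, select a maximal $\frac32$-bitile $t \in \P_{stock}$ (with respect to the order $\le$ on $\P_{all}^{3/2}$) such that
\[
\frac{\|\Pi_t^i F\|_2}{|R_t|^{1/2}} > \frac{\size_{F,i}(\P)}{2},
\]
attach the tree $T(t) := \{P \in \P_{stock} : P \le t\}$ to $\F$, remove $T(t)$ from $\P_{stock}$, and iterate until no admissible $t$ remains. Convexity of $\P_{stock}$ is preserved exactly as before, each $T(t)$ is a convex $\frac32$-tree, and the resulting $\P_{lo}$ has $\size_{F,i}(\P_{lo}) \le \tfrac12 \size_{F,i}(\P)$ by construction.

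The crucial ingredient is orthogonality of the projections attached to the selected tops. By maximality, the selected $\frac32$-bitiles $t$ are pairwise incomparable under $\le$, hence pairwise disjoint as subsets of the three-dimensional phase space $\R_+^2 \times \R_+$. This disjointness transfers to the underlying collections of two-dimensional tiles whose spans define $\Pi_t^2$ and $\Pi_t^{3/2}$ (the collections $\{R_P \times [0,|\omega_P|] \times \omega_P\}$ and $\bigcup_P \mathcal{C}_P$ respectively, as described in the text), so the two-dimensional analogue of \eqref{e11} yields $\langle \Pi_t^i F, \Pi_{t'}^i F\rangle = 0$ for $t \ne t'$. Hence
\[
\sum_{T \in \F}|R_T| = \sum_t |R_t| \le 4\,\size_{F,i}(\P)^{-2} \sum_t \|\Pi_t^i F\|_2^2 \le 4\,\size_{F,i}(\P)^{-2}\|F\|_2^2,
\]
which gives the tree-top counting bound.

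To finish, iterate this decomposition starting from $\P$: at stage $n$, apply the lemma to the leftover low-size collection to extract a forest $\F_n$ with $\size_{F,i}(\P_n) \le 2^{-n}$ and $\sum_{T \in \F_n}|R_T| \lesssim 2^{2n}\|F\|_2^2$. Continue while $2^{-n} \le \size_{F,i}(\P)$; the remaining bitiles $P$ all have $\|\Pi_P^i F\|_2 = 0$, which is precisely the null class $\P_{null}$. Convexity of each $\P_n$ and each tree $T \in \F_n$ is preserved at every stage, because removing a convex union of trees from a convex collection leaves a convex collection. The only step that requires any checking beyond the one-dimensional template is the orthogonality statement for the projections $\Pi_t^{3/2}F$, since these projections involve sums over $\omega$ of the same scale in the second frequency variable; I expect this to be the main (though still routine) technical verification, and it follows immediately from the explicit wave-packet formula for $\Pi_p^{3/2}F$ stated just before Lemma \ref{efjreg458t6ghu56} together with the orthogonality of Walsh wave packets attached to disjoint tiles.
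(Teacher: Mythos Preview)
Your proposal is correct and follows exactly the approach the paper indicates: the paper's proof is the single sentence ``The proof of Proposition \ref{2erbyu76i87o} applies with essentially no modification,'' and you have simply spelled out those modifications, including the one nontrivial check (orthogonality of $\Pi_t^iF$ for incomparable selected tops $t$), which you handle correctly via disjointness of the associated two-dimensional tile regions.
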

\begin{proof}[of Theorem \ref{w,.qsklwqoid9348r94}]
By multilinear restricted type interpolation it suffices to prove
$$|\Lambda^{W,1}(F_1,F_2,F_3)|\lesssim |E_1|^{1/p_1}|E_2|^{1/p_2}|E_3|^{1/p_3}$$ whenever
 $|F_i|\le 1_{E_i}$ for finite measure subsets $E_i$ of $\R_+^2$. Fix $2<p_i<\infty$ for the rest of the proof. We begin by observing that
$$\Lambda^{W,1}(F_1,F_2,F_3)=\sum_{P\in\P_{all}^{3/2}}\Lambda_{P}(F_1,F_2,F_3),$$
and thus it further suffices to prove
$$|\sum_{P\in\P}\Lambda_{P}(F_1,F_2,F_3)|\lesssim |E_1|^{1/p_1}|E_2|^{1/p_2}|E_3|^{1/p_3},$$
for all convex, finite $\P\subset \P_{all}^{3/2}$.

Note that by \eqref{e29} and the natural two dimensional extension of  Proposition \ref{djhfdftyert}, we have
$$\size_{F_1,2}(\P), \size_{F_2,3/2}(\P), \size_{F_3,3/2}(\P)\lesssim 1.$$
Let $\P_n^{(1)}$, $\P_n^{(2)}$, $\P_n^{(3)}$ be the collections provided by Proposition \ref{2erbyu76i87onewdd}, relative to $\size_{F_1,2}(\P)$, $\size_{F_2,3/2}(\P)$ and  $\size_{F_3,3/2}(\P)$, respectively. Define $\P_{n_1,n_2,n_3}=\P_{n_1}^{(1)}\cap \P_{n_2}^{(2)}\cap \P_{n_3}^{(3)}$. Note that
\begin{equation}
\label{d,lmcerugop34r956pl}
\sum_{P\in\P}\Lambda_{P}(F_1,F_2,F_3)=\sum_{2^{-n_1},2^{-n_2},2^{-n_3}\lesssim 1}\sum_{P\in \P_{n_1,n_2,n_3}}\Lambda_{P}(F_1,F_2,F_3).
\end{equation}
Reasoning like in the proof of Theorem \ref{thm:main} from Section \ref{sec:Lac-Thi}, we organize $\P_{n_1,n_2,n_3}$ as a forest in three different ways, with the $L^1$ norm of the counting function of the tops bounded by $2^{2n_1}|E_1|$, $2^{2n_2}|E_2|$ and $2^{2n_3}|E_3|$, respectively.

Pick now $1>\alpha>\max\{\frac2{p_2},\frac2{p_3}\}$. Using \eqref{d,lmcerugop34r956pl}, \eqref{e28} with $s_2=s_3=2/\alpha$ and then Lemma \ref{efjreg458t6ghu56}  we get
$$|\sum_{P\in\P}\Lambda_{P}(F_1,F_2,F_3)|=\sum_{2^{-n_1},2^{-n_2},2^{-n_3}\lesssim 1}2^{-n_1-n_2\alpha-n_3\alpha}\min(2^{2n_1}|E_1|, 2^{2n_2}|E_2|, 2^{2n_3}|E_3|)\le$$
$$\sum_{2^{-n_1},2^{-n_2},2^{-n_3}\lesssim 1}2^{-n_1-n_2\alpha-n_3\alpha}(2^{2n_1}|E_1|)^{1/p_1} (2^{2n_2}|E_2|)^{1/p_2}(2^{2n_3}|E_3|)^{1/p_3}$$$$\lesssim |E_1|^{1/p_1}|E_2|^{1/p_2}|E_3|^{1/p_3}.$$
 The argument is now complete.
\end{proof}

\end{document}